\titleformat{\section}{\normalfont\scshape\centering}{\thesection}{1em}{}
  \titleformat{\subsection}{\bfseries}{\thesubsection}{1em}{}
\newtheorem{theorem}{Theorem}[section]
\newtheorem{corollary}[theorem]{Corollary}
\newtheorem{lemma}[theorem]{Lemma}
\newtheorem{proposition}[theorem]{Proposition}
\theoremstyle{definition}
\newtheorem{definition}[theorem]{Definition}
\newtheorem{remark}[theorem]{Remark}
\newtheorem{conjecture}[theorem]{Conjecture}
\numberwithin{equation}{section}
\newcommand{\sgn}{\text{sgn}}
\newcommand\eps{\varepsilon}
\newcommand\plim{\mathop{\widetilde \lim}}
\newcommand\E{\mathbb{E}}
\newcommand\R{\mathbb{R}}
\newcommand\Z{\mathbb{Z}}
\newcommand\N{\mathbb{N}}
\newcommand\C{\mathbb{C}}
\newcommand{\Mod}[1]{\ (\mathrm{mod}\ #1)}
\begin{document}
\title{Value patterns of multiplicative functions and related sequences}

\date{}
\author{Terence Tao}
\address{Department of Mathematics, UCLA\\
405 Hilgard Ave\\
Los Angeles CA 90095\\
USA}
\email{tao@math.ucla.edu}

\author{Joni Ter\"av\"ainen}
\address{Mathematical Institute, University of Oxford\\ Radcliffe Observatory Quarter, Woodstock Rd\\
Oxford OX2 6GG\\
UK}
\email{joni.teravainen@maths.ox.ac.uk}

\begin{abstract}
We study the existence of various sign and value patterns in sequences defined by multiplicative functions or related objects. For any set $A$ whose indicator function is ``approximately multiplicative'' and uniformly distributed on short intervals in a suitable sense, we show that the density of the pattern $n+1\in A$, $n+2\in A$, $n+3\in A$ is positive, as long as $A$ has density greater than $\frac{1}{3}$. Using an inverse theorem for sumsets and some tools from ergodic theory, we also provide a theorem that deals with the critical case of $A$ having density exactly $\frac{1}{3}$, below which one would need nontrivial information on the local distribution of $A$ in Bohr sets to proceed. We apply our results firstly to answer in a stronger form a question of Erd\H{o}s and Pomerance on the relative orderings of the largest prime factors $P^{+}(n)$, $P^{+}(n+1), P^{+}(n+2)$ of three consecutive integers. Secondly, we show that the tuple $(\omega(n+1),\omega(n+2),\omega(n+3)) \Mod 3$ takes all the $27$ possible patterns in $(\mathbb{Z}/3\mathbb{Z})^3$ with positive lower density, with $\omega(n)$ being the number of distinct prime divisors. We also prove a theorem concerning longer patterns $n+i\in A_i$, $i=1,\dots k$ in approximately multiplicative sets $A_i$ having large enough densities, generalising some results of Hildebrand on his ``stable sets conjecture''. Lastly, we consider the sign patterns of the Liouville function $\lambda$ and show that there are at least $24$ patterns of length $5$ that occur with positive upper density. In all of the proofs we make extensive use of recent ideas concerning correlations of multiplicative functions.  
 \end{abstract}
\maketitle

\section{Introduction}

For any function $a\colon \mathbb{N}\to S$ with finite range $S$ and any $k\in \mathbb{N}\coloneqq \{1,2,\ldots\}$, we may define the length $k$ \emph{value patterns} of $a$ to be the tuples $s\in S^k$ that are of the form
\begin{align*}
s = (a(n+1),a(n+2),\ldots,a(n+k)) 
\end{align*}
for some\footnote{One could also include the $n=0$ case here if one wished, although it will not affect our main results.} $n\in \mathbb{N}$. We further say that the function $a$ \emph{attains a pattern $s$ with positive lower density} (resp. \emph{upper density}) if the set
\begin{align*}
\{n\in \mathbb{N}: (a(n+1),a(n+2),\ldots, a(n+k))=s\}    
\end{align*}
has positive lower density\footnote{For the precise definitions of the various densities used in this paper, as well as the standard arithmetic functions and asymptotic notation, see Subsection \ref{sub: not}.} (resp. upper density). In the case $S=\{-1,+1\}$, we will refer to value patterns as \emph{sign patterns}. In this paper, we will mostly be interested in whether or not a given pattern is attained with positive lower density.  

The occurrence of various value patterns for an arithmetic function $a$ has attracted particular interest in the case where $a \colon \mathbb{N}\to \mathbb{D}$ is \emph{multiplicative}, that is to say $a(1)=1$ and $a(mn)=a(m)a(n)$ whenever $m$ and $n$ are coprime natural numbers. Here $\mathbb{D}\coloneqq \{z\in \mathbb{C}:|z|\leq 1\}$ is the unit disc of the complex plane. Indeed, the interaction of multiplicative functions with their shifts is the subject of many conjectures, including those of Chowla \cite{chowla} and Elliott \cite{elliott-book}, \cite{mrt-average}. In particular, for the Liouville function $\lambda(n)$ and the M\"{o}bius function $\mu(n)$ the existence of various sign or value patterns has been actively studied, due to connections to the aforementioned conjectures.  

Chowla's conjecture \cite{chowla} for the Liouville function states that the autocorrelations\footnote{Unless otherwise stated, all variables such as $n$ appearing in summations are understood to be restricted to the natural numbers, with the exception of variables named $p$ (or $p_1$, $p_2$, etc.) which are understood to be restricted to the primes.} 
$$ \frac{1}{x} \sum_{n \leq x} \lambda(n+h_1) \dots \lambda(n+h_k) $$
of the Liouville function $\lambda$ converge to $0$ as $x \to \infty$, for any $k \geq 1$ and distinct natural numbers $h_1,\dots,h_k$.  This conjecture easily implies that $\lambda(n)$ attains all the $2^k$ sign patterns in $\{-1,+1\}^k$ for any $k$ infinitely often, and in fact the conjecture is equivalent to each of these length $k$ patterns occurring with asymptotic density $2^{-k}$. The analogous version of Chowla's conjecture for the M\"obius function\footnote{By using the identities $\lambda(n) = \sum_{d^2|n} \mu(n/d^2)$ and $\mu(n) = \sum_{d^2|n} \mu(d) \lambda(n/d^2)$ and exploiting the absolute convergence of the sum $\sum_d \frac{1}{d^2}$, one can easily show that the Chowla conjectures for the Liouville and M\"obius functions are equivalent if one generalises the distinct linear forms $n+h_1,\dots,n+h_k$ to non-parallel affine forms $a_1 n + h_1,\dots,a_k n+h_k$; we omit the details.} $\mu$ implies that the function $\mu$ attains every \emph{admissible} value pattern in $(\varepsilon_1,\ldots, \varepsilon_k)\in\{-1,0,+1\}^k$ infinitely often, where we call a pattern admissible if for every prime $p$ there exists $b\in [0,p^2-1]$ such that $\varepsilon_{p^2j+b}=0$ for all $j$ satisfying $1\leq p^2j+b\leq k$. Nevertheless, Chowla's conjecture (for either $\lambda$ or $\mu$) remains unsolved once $k \geq 2$, and thus these implications are only conditional. In Subsection \ref{sub: Liouville}, we will give an account of the unconditional results on sign patterns of the Liouville function, as well as state our new result on length $5$ patterns.\\

In this paper, we study the appearance of value patterns in more sequences that have some multiplicative structure. Let $f \colon \mathbb{N}\to \mathbb{D}$ be a completely multiplicative function\footnote{We say that $f$ is \emph{completely multiplicative} if $f(mn)=f(m)f(n)$ for all $m,n\in \mathbb{N}$ and $f(1)=1$.}, and assume that the range $f(\mathbb{N})$ is a finite set, so that it is meaningful to talk about the sign patterns of $f$. Then actually $f(\mathbb{N})=\mu_m$ or $f(\mathbb{N})=\mu_m\cup\{0\}$ for some $m$, where $\mu_m \coloneqq \{ z \in \C: z^m=1\}$ is the set of roots of unity of order $m$. The case of $f(\mathbb{N})=\{-1,+1\}$, is rather similar to the case of the Liouville function $\lambda$, and in fact it follows easily\footnote{In \cite{tt-elliott}, the proof was written only for $f=\lambda$, but the exact same argument works for any completely multiplicative bounded $f$ that is not weakly pretentious.} from \cite[Corollary 1.6; Proof of Corollary 7.2]{tt-elliott} that if $f$ is \emph{not weakly pretentious}, by which we mean that
\begin{align*}
\sum_{p\leq x}\frac{1-\textnormal{Re}(f(p)\bar{\chi}(p))}{p}\gg_{\chi}\log \log x    
\end{align*}
for any Dirichlet character $\chi$, then $f$ attains all the $16$ possible length $4$ sign patterns with positive lower density. At the opposite extreme, the case of $f$ being \emph{pretentious} in the sense that
\begin{align*}
\sum_{p\leq x}\frac{1-\textnormal{Re}(f(p)\bar{\chi}(p))}{p}\ll 1 
\end{align*}
for some Dirichlet character $\chi$ was recently considered by Klurman and Mangerel \cite{klurman-mangerel-gowers}. We also remark that if $f(\mathbb{N})\subset \mu_m$ and if $f$ satisfies the non-pretentiousness condition
\begin{align*}
\sum_{p\leq x}\frac{1-\textnormal{Re}(f(p)^d\bar{\chi}(p))}{p}\xrightarrow{x\to \infty}\infty    
\end{align*}
for all $1\leq d\leq m-1$ and every Dirichlet character $\chi$, then Elliott's conjecture \cite{elliott-book}, \cite{mrt-average} on correlations of multiplicative functions would imply\footnote{Indeed, if we use the expansion $1_{f(n)=e(a/m)}=\frac{1}{m}\sum_{j=0}^{m-1}f(n)^j e(-aj/m)$, we immediately reduce the study of the value patterns of $f$ to bounding its correlations, which can be shown to be negligible assuming Elliott's conjecture.} that $f$ attains every value pattern in $\mu_m^k$ with equal asymptotic density $m^{-k}$. For $k=2$ (and if one uses logarithmic density instead of asymptotic density), this follows unconditionally from \cite[Theorem 1.5]{tao-chowla}, and from \cite[Corollary 1.6]{tt-elliott} we can deduce various special cases for higher values $k\geq 3$ (again using logarithmic density in place of asymptotic density).\\ 

In what follows, we will mostly be studying the case $f(\mathbb{N})=\{0,1\}$, and only make the weaker assumption that $f$ is ``approximately multiplicative'' in a precise sense defined in Subsection \ref{sub: stable} (there we call this notion of approximate multiplicativity ``weak stability''). In this case, it is natural to write $f(n)=1_A(n)$ for some set $A\subset \mathbb{N}$ and to say that the set $A$ itself is ``approximately multiplicative''. The occurrence of patterns in such sets is not covered by Elliott's conjecture. It turns out that the class of \emph{genuinely} multiplicative sets of positive asymptotic density are not a particularly interesting class of sets (a typical example being the set $\{n: \mu^2(n)=1\}$ of square-free numbers, the patterns of which are well-understood from basic sieve theory), but the wider class of \emph{approximately} multiplicative sets instead does include various interesting sets related to the largest prime factors of integers or to the number of prime divisors of an integer.  For instance, if $P^{+}(n)$ denotes the largest prime factor of a natural number $n$ (and $P^{+}(1)\coloneqq 1$), the sets 
\begin{align}\label{eq00}
Q_{\alpha, \beta}\coloneqq \{n\in \mathbb{N}: n^{\alpha}<P^{+}(n)<n^{\beta}\}    
\end{align}
with $0\leq \alpha < \beta \leq 1$ turn out to be sufficiently close to being multiplicative that our results in Subsection \ref{sub: stable} apply, and we will present several applications of our results to patterns in the sets $Q_{\alpha,\beta}$. See also Subsections \ref{sub: largest} and \ref{sub: omega} below for more applications of our results to value patterns of approximately multiplicative sets.

We also investigate the case $f(\mathbb{N})=\mu_3$, and more specifically the case $f(n) \coloneqq e(\frac{\omega(n)}{3})$, where $\omega(n)$ is the number of prime factors of $n$ without multiplicities, and $e(\theta) \coloneqq e^{2\pi i \theta}$. In this case, the prior knowledge on length $3$ value patterns was very limited, since the fact that $f^3=1$ makes the result in \cite[Corollary 1.6]{tt-elliott} on $3$-point correlations of multiplicative functions inapplicable. The functions $n\mapsto e(\frac{\omega(n)}{q})$ can be thought of as generalisations of the Liouville or M\"{o}bius functions\footnote{For $q=2$, the function $n\mapsto (-1)^{\omega(n)}$ is of course not quite equal to either the Liouville function or the M\"{o}bius function, but is very closely connected to both since it takes the value $-1$ at all the primes.} which takes values in the $q$th roots of unity rather than in $\{-1,+1\}$, and their value patterns are in one-to-one correspondence with those of the sequence $\omega(n)\Mod q$.

Before stating our results on patterns in general approximately multiplicative sets, we state the corollaries of our results for the sets $Q_{\alpha, \beta}$ and $\{n\in \mathbb{N}: \omega(n)\Mod 3\}$ mentioned above.

\subsection{Comparison of largest prime factors of consecutive integers} \label{sub: largest}

In what follows, let
\begin{align*}
d_{-}(A) \coloneqq \liminf_{x\to \infty}\frac{|A\cap [1,x]|}{x}    
\end{align*}
denote the lower density of a set $A\subset \mathbb{N}$. 

In 1978, Erd\H{o}s and Pomerance \cite{ep} studied the orderings of the largest prime factors of consecutive integers, and showed that
\begin{align}\label{eq0}
d_{-}(\{n\in \mathbb{N}: P^{+}(n+1)<P^{+}(n+2)\})\geq c_0>0    
\end{align}
for some explicit $c_0$. They also showed that the set 
\begin{align}\label{eq1}
\{n\in \mathbb{N}: P^{+}(n+1)<P^{+}(n+2)<P^{+}(n+3)\} 
\end{align}
is infinite by looking at the explicit sequence $n=p^{2^{k_p}}-2$ with $k_p$ suitably chosen for every odd $p$, and raised the problem of proving that also the set  
\begin{align}\label{eq2}
\{n\in \mathbb{N}: P^{+}(n+1)>P^{+}(n+2)>P^{+}(n+3)\} 
\end{align}
corresponding to the opposite ordering was infinite. This was eventually solved by Balog \cite{balog}, who showed that there are infinitely many solutions having the specific form $n=m^2-2$. It is clear however that both the construction of Erd\H{o}s and Pomerance and that of Balog only produce sparse sequences of $n\leq x$ that belong to the sets \eqref{eq1} or \eqref{eq2}; for \eqref{eq1} we get $\ll \sqrt{x}$ elements up to $x$ (since certainly we must have $k_p\geq 1$), and for \eqref{eq2} Balog's proof gives $\asymp \sqrt{x}$ elements up to $x$. 

Our main theorem in Subsection \ref{sub: stable} will be seen in Section \ref{sec: apps} to imply the following strengthenings of the above results, in which the sets \eqref{eq1}, \eqref{eq2} are shown to have positive lower density, and and also give some limited comparison with $P^+(n+4)$, or with various powers $n^\alpha, n^\beta$:

\begin{theorem}[Orderings of largest prime factors]\label{theo_comparison} We have
\begin{align*}
d_{-}(\{n\in \mathbb{N}: P^{+}(n+1)<P^{+}(n+2)<P^{+}(n+3)>P^{+}(n+4)\})>0    
\end{align*}
and
\begin{align*}
d_{-}(\{n\in \mathbb{N}: P^{+}(n+1)>P^{+}(n+2)>P^{+}(n+3)<P^{+}(n+4)\})>0.        
\end{align*}
\end{theorem}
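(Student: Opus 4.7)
The plan is to encode each ordering in Theorem~\ref{theo_comparison} as a simultaneous membership pattern $n+i\in A_i$, $i=1,2,3,4$, in four approximately multiplicative sets of the form $Q_{\alpha,\beta}$, and then to appeal to the longer-pattern theorem on weakly stable sets promised in Subsection~\ref{sub: stable}. For the first ordering $P^+(n+1)<P^+(n+2)<P^+(n+3)>P^+(n+4)$, I would fix exponents $0<\alpha_1<\beta_1\le \alpha_2<\beta_2\le \alpha_3<\beta_3\le 1$ and take $A_1=A_4\coloneqq Q_{\alpha_1,\beta_1}$, $A_2\coloneqq Q_{\alpha_2,\beta_2}$, $A_3\coloneqq Q_{\alpha_3,\beta_3}$. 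Using $(n+i)^\gamma=(1+o(1))n^\gamma$, the sandwiching $(n+i)^{\alpha_i}<P^+(n+i)<(n+i)^{\beta_i}$ implicit in $n+i\in A_i$ forces the desired chain of inequalities for all sufficiently large $n$, the positive gap $\alpha_{i+1}-\beta_i$ absorbing the lower-order factor; in particular $P^+(n+4)<(n+4)^{\beta_1}<(n+3)^{\alpha_3}<P^+(n+3)$. The reverse ordering is handled by the mirror choice $A_1=A_4\coloneqq Q_{\alpha_3,\beta_3}$, $A_2\coloneqq Q_{\alpha_2,\beta_2}$, $A_3\coloneqq Q_{\alpha_1,\beta_1}$.

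Thus it suffices to exhibit a choice of exponents for which the set of $n$ with $n+i\in A_i$ for $i=1,2,3,4$ has positive lower density. I would deduce this from the longer-pattern theorem of Subsection~\ref{sub: stable}, whose hypotheses are that each $A_i$ is weakly stable (which the introduction asserts holds for the sets $Q_{\alpha,\beta}$ defined in \eqref{eq00}) and that the densities of the $A_i$ lie above the quantitative threshold appropriate for length four patterns. The densities of $Q_{\alpha,\beta}$ are standard Dickman-type integrals, monotone in the intervals $(\alpha,\beta)$, so they can be tuned by adjusting the exponents.

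The principal obstacle is the density bookkeeping: the three underlying exponent intervals $(\alpha_i,\beta_i)$ are forced to be pairwise disjoint, which caps the sum of their densities by the $Q_{0,1}$-total, so one cannot make all three arbitrarily large and must check that the threshold built into the length-four version of the longer-pattern theorem is actually met. If the naive choice falls short, I would either replace some $Q_{\alpha_i,\beta_i}$ by a coarser weakly stable set (for instance a disjoint union of $Q$-strips, or a set defined via second-largest prime factors) in order to boost the relevant density without breaking the ordering implications, or alternatively apply the longer-pattern theorem only to three of the four membership conditions and recover the fourth by a sieve-type argument analogous to the original Erd\H{o}s--Pomerance proof of \eqref{eq0}.
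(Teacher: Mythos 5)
Your encoding reduces the desired $4$-shift pattern to the event $n+i\in A_i$ for four sets of the form $Q_{\alpha_i,\beta_i}$ (with $A_4=A_1$), and then wants to invoke Theorem~\ref{theo2} with $k=4$. This cannot be made to work, and the obstacle you flag in your last paragraph is in fact fatal rather than a matter of bookkeeping. Theorem~\ref{theo2} requires every $\delta_i>c_4=\frac{3+\sqrt 2}{7}\approx 0.6306$. But your three distinct sets $Q_{\alpha_1,\beta_1},Q_{\alpha_2,\beta_2},Q_{\alpha_3,\beta_3}$ are built over pairwise disjoint exponent intervals, so their Dickman densities $\rho(1/\beta_i)-\rho(1/\alpha_i)$ sum to at most $1$. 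Three numbers each exceeding $0.63$ cannot sum to at most $1$, so the hypotheses of the $k=4$ theorem are unreachable no matter how the exponents are tuned. Your proposed fixes do not resolve this: replacing $Q_{\alpha_i,\beta_i}$ by a union of $Q$-strips (or anything ``coarser'') destroys the deterministic implication ``$n+i\in A_i$ for all $i$ $\Rightarrow$ the stated ordering of $P^+$-values,'' which depends precisely on the exponent intervals being sandwiched in an order-preserving way; and the ``sieve-type argument'' for the fourth condition is not specified and there is no analogue of the Erd\H{o}s--Pomerance sieve that converts a single extra condition $P^+(n+4)<P^+(n+3)$ on top of a positive-density set into another positive-density set. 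A more conceptual point: conditioning on $n+4\in Q_{\alpha_1,\beta_1}$ is a far stronger event than the required $P^+(n+4)<P^+(n+3)$, and this over-conditioning is exactly what drains the available density.

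The paper's actual proof takes a different route that avoids the $k=4$ theorem entirely. It first proves the $k=3$ result (Theorem~\ref{theo_largest}, which needs only Theorem~\ref{theo1.5}) to get that $\mathcal S=\{n:P^+(n+1)<P^+(n+2)<P^+(n+3)\}$ has positive lower density, and then runs a contrapositive iteration: if the $4$-shift pattern had density zero along some sequence $x_l$, then almost every $n\in\mathcal S$ would satisfy $n+1\in\mathcal S$, hence (iterating) $n+1,\dots,n+H\in\mathcal S$ for any fixed $H$, producing for a positive proportion of $n$ an increasing chain $P^+(n+2)<\cdots<P^+(n+H+3)$. But a long strictly increasing chain forces either $P^+(n+2)\leq n^\varepsilon$ (density $\rho(1/\varepsilon)$, small) or $P^+(n+h)>n^\varepsilon$ for many consecutive $h$ (density $\leq\varepsilon$ for $H$ large by Matom\"aki--Radziwi\l{}\l{}). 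Letting $\varepsilon\to 0$ gives a contradiction. If you want to salvage your plan, this iteration step is the missing idea: use the $k=3$ machinery to get the increasing triple, and upgrade to the $4$-shift constraint not by a fourth density condition but by ruling out, via short-interval equidistribution, the possibility that the pattern ``fails to break'' almost always.
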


\begin{theorem}[Largest prime factors of three consecutive integers]\label{theo_largest} Let $0<\alpha<\beta<1$ be real numbers, such that $\rho(1/\alpha) + \rho(1/\beta) \neq 1$, where $\rho$ is the Dickman function (see \cite{ht}). Then we have
\begin{align*}
d_{-}(\{n\in \mathbb{N}: P^{+}(n+1)<n^{\alpha}<P^{+}(n+2)<n^{\beta}< P^{+}(n+3)\})>0    
\end{align*}
and 
\begin{align*}
d_{-}(\{n\in \mathbb{N}: P^{+}(n+3)<n^{\alpha}<P^{+}(n+2)<n^{\beta}< P^{+}(n+1)\})>0.    
\end{align*}
\end{theorem}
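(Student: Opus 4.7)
The strategy is to decompose $\mathbb{N}$ according to the size of the largest prime factor. Setting
$$A_1 \coloneqq \{n: P^+(n)<n^\alpha\},\quad A_2 \coloneqq \{n: n^\alpha<P^+(n)<n^\beta\},\quad A_3 \coloneqq \{n: P^+(n)>n^\beta\},$$
the two orderings in Theorem \ref{theo_largest} translate into finding length-$3$ patterns $n+i\in A_i$ for $i=1,2,3$, and its mirror $n+1\in A_3,\ n+2\in A_2,\ n+3\in A_1$, respectively. By Dickman--de~Bruijn theory, the three sets partition $\mathbb{N}$ up to a negligible boundary, with densities $d(A_1)=\rho(1/\alpha)$, $d(A_2)=\rho(1/\beta)-\rho(1/\alpha)$, and $d(A_3)=1-\rho(1/\beta)$. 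Each $A_i$ is essentially a set of the form $Q_{\alpha',\beta'}$ from \eqref{eq00} or a simple Boolean combination of such, which the introduction flags as weakly stable (i.e.\ approximately multiplicative in the sense of Subsection \ref{sub: stable}); classical results on smooth numbers and on numbers with a large prime factor supply the required uniform distribution on short intervals.

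One then applies the multi-set version of the length-$3$ pattern theorem of Subsection \ref{sub: stable} to the triple $(A_1,A_2,A_3)$. The natural density threshold for positive lower density of the pattern $n+i\in A_i$ is $d(A_1)+d(A_2)+d(A_3)>1$, but here the sum is exactly $1$, so we land in the critical case, handled by the version of the theorem based on an inverse theorem for sumsets combined with ergodic-theoretic inputs (as announced in the abstract). Crucially, the hypothesis $\rho(1/\alpha)+\rho(1/\beta)\neq 1$ is equivalent to $d(A_1)\neq d(A_3)$; this asymmetry between the ``smooth'' and ``large prime factor'' classes rules out the symmetric extremal configurations otherwise permitted by the inverse theorem, allowing the critical-case theorem to produce the pattern with positive lower density.

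The reverse ordering follows by the same argument after interchanging the labels of $A_1$ and $A_3$, a swap that preserves both $d(A_1)+d(A_2)+d(A_3)=1$ and the non-degeneracy $d(A_1)\neq d(A_3)$. The principal obstacle is the critical-case density sum: since the cleaner density-$>1/3$ theorem is unavailable, one must verify the stronger hypotheses of the critical-case theorem for the specific sets $A_i$ and track how the non-degeneracy $\rho(1/\alpha)+\rho(1/\beta)\neq 1$ enters the inverse-sumset step. A more minor issue, the boundary behaviour of the defining inequalities at $P^+(n)=n^\alpha,n^\beta$, contributes only a density-zero set and can be absorbed into the error terms.
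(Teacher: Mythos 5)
Your proposal follows the paper's proof exactly: decompose into $A_1 = Q_{0,\alpha}$, $A_2 = Q_{\alpha,\beta}$, $A_3 = Q_{\beta,1}$, verify weak stability and uniform distribution in short intervals via Dickman theory and a Matom\"aki--Radziwi\l{}\l{}-type result, observe the densities sum to exactly $1$ so one is in the critical regime, and invoke the second part of Theorem \ref{theo1.5} using that $\rho(1/\alpha)+\rho(1/\beta)\neq 1$ is precisely the condition $\delta_1\neq\delta_3$. The paper treats the second ordering ``completely symmetrically'' just as you do by swapping $A_1$ and $A_3$.
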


As mentioned above, either of Theorem \ref{theo_comparison} and Theorem \ref{theo_largest} immediately imply the new result that the sets \eqref{eq1}, \eqref{eq2} both have positive lower density.  The condition $\rho(1/\alpha) + \rho(1/\beta) \neq 1$ should be removable, but this seems to be beyond the methods in this paper (unless there is substantial progress on understanding local Fourier uniformity of multiplicative functions or indicator functions of weakly stable sets).

We remark that the study of the largest prime factors of \emph{two} consecutive integers has been taken up by several authors. In particular, the original value of $c_0=0.0099$ in \eqref{eq0} by Erd\H{o}s and Pomerance was improved by de la Bret\`eche, Pomerance and Tenenbaum \cite{bpt} to $c_0=0.05544$, and the current record is held by Wang \cite{wang2} with $c_0=0.1356$. It was conjectured in the correspondence of Erd\H{o}s and Tur\'an \cite{erdos-turan} (and repeated by Erd\H{o}s in \cite{erdos-conj}) that the set of $n$ with $P^{+}(n)<P^{+}(n+1)$ has asymptotic density equal to $1/2$, as one would naturally expect. In \cite{tera-binary}, it was shown that the \emph{logarithmic} density of this set indeed equals $1/2$. For orderings of longer strings of consecutive values of $P^{+}(n)$, little is known, but Wang \cite{wang2} showed that either of
\begin{align*}
P^{+}(n+i)<\min_{\substack{j\leq J\\j\neq i}}P^{+}(n+j)\quad \textnormal{and}\quad  P^{+}(n+i)>\max_{\substack{j\leq J\\j\neq i}}P^{+}(n+j)   
\end{align*}
happens with positive lower density for any $J\geq 3$. For completely arbitrary orderings of largest prime factors at consecutive integers, there is a natural conjecture of de Koninck and Doyon \cite{dd}, which states that for any permutation $\{a_1,\ldots, a_k\}$ of $\{1,\ldots, k\}$ we have 
\begin{align*}
d(\{n\in \mathbb{N}: P^{+}(n+a_1)<\cdots <P^{+}(n+a_k)\})=\frac{1}{k!}.    
\end{align*}
This however seems to be far out of reach, and even for $k=2$ we only know lower bounds for the asymptotic density and we know the correct value for the logarithmic density but do not know that the asymptotic density exists to start with.

\subsection{Patterns of the number of prime factors modulo 3}\label{sub: omega}

In Section \ref{sec: apps}, we will also utilise our main theorem stated in Subsection \ref{sub: stable} to prove the following result about the sign patterns of $\omega(n) \Mod 3$.

\begin{theorem}[Value patterns of $\omega \Mod{3}$]\label{theo_omega} The function $\omega(n) \Mod 3$ attains each of the $27$ possible length three value patterns with positive lower density. In other words, we have
\begin{align*}
d_{-}(\{n\in \mathbb{N}: \omega(n+1)\equiv a \Mod{3}, \omega(n+2)\equiv b\Mod{3}, \omega(n+3)\equiv c \Mod 3\})>0. \end{align*}
for all $a,b,c\in \mathbb{Z}/3\mathbb{Z}$.
The same holds for $\Omega(n)$, the number of prime factors of $n$ counting multiplicities, in place of $\omega(n)$.
\end{theorem}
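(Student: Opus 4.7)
The plan is to use a Fourier expansion on $\Z/3\Z$ to decompose the pattern count into a short list of multiplicative correlations, dispose of the accessible ones using known Chowla/Elliott-type results, and dispatch the one remaining ``hard'' correlation via the main theorem on approximately multiplicative sets from Subsection \ref{sub: stable}. I present the argument for $\omega$; the case of $\Omega$ is handled identically with $g(n)\coloneqq e(\Omega(n)/3)$, which has the advantage of being genuinely completely multiplicative.

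Set $f(n)\coloneqq e(\omega(n)/3)$, so $f(\mathbb{N})\subseteq \mu_3$. The Fourier identity $1_{\omega(n)\equiv a \Mod 3}=\frac{1}{3}\sum_{j=0}^{2}e(-aj/3)\,f(n)^j$ expresses the normalised count of the pattern $(a,b,c)$ as
\begin{align*}
\frac{1}{27}\sum_{(j_1,j_2,j_3)\in\{0,1,2\}^3} e\bigl(-\tfrac{aj_1+bj_2+cj_3}{3}\bigr)\,C_{j_1,j_2,j_3}(x),
\end{align*}
where $C_{j_1,j_2,j_3}(x)\coloneqq\frac{1}{x}\sum_{n\leq x}f(n+1)^{j_1}f(n+2)^{j_2}f(n+3)^{j_3}$. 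The tuple $(0,0,0)$ contributes the expected main term $1/27$. I would then dispose of the other correlations as follows. The one-point correlations (two $j_i$'s zero) vanish by Delange's theorem on equidistribution of $\omega \Mod 3$. The two-point correlations (exactly one $j_i=0$) vanish in logarithmic density by Tao's two-point logarithmic Chowla theorem \cite[Theorem 1.5]{tao-chowla} applied to the non-pretentious function $f$ (note $f(p)=e(1/3)$ for every prime). The three-point correlations $(j_1,j_2,j_3)\in\{1,2\}^3$ with $j_1+j_2+j_3\not\equiv 0 \Mod 3$ vanish in logarithmic density by Tao--Ter\"av\"ainen's three-point Elliott-type theorem \cite[Corollary 1.6]{tt-elliott}, since the product $f^{j_1+j_2+j_3}$ equals $f$ or $\bar f$ and is non-pretentious.

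The only correlations left are $C_{1,1,1}(x)$ and $C_{2,2,2}(x)=\overline{C_{1,1,1}(x)}$; these fall outside Elliott-type methods because $f^3\equiv 1$ is pretentious. Their control is exactly the situation that the main theorem of Subsection \ref{sub: stable} is built to handle. I would apply its critical-density-$1/3$ version to the level sets $A_a\coloneqq\{n:\omega(n)\equiv a\Mod 3\}$, each of which has density exactly $1/3$ by Delange. The partition $\{A_0,A_1,A_2\}$ enjoys the cyclic compatibility $1_{A_{a+1}}(pn)=1_{A_a}(n)$ for primes $p\nmid n$, so the approximate-multiplicativity hypothesis of the theorem is met in the form of a cyclic symmetry, and any conclusion obtained for one $A_a$ transfers to the other two. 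Combining this with the Fourier decomposition above should yield positive lower density for each of the $27$ patterns simultaneously.

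The principal obstacle is the pretentious three-point correlation $C_{1,1,1}$: it is invisible to all existing Elliott-type methods, and its effective control is precisely what the combination of inverse sumset estimates and ergodic techniques underlying the main theorem of Subsection \ref{sub: stable} is engineered to deliver. A secondary issue is the passage from logarithmic to lower density, since the correlation-based inputs above are naturally in logarithmic form; this is resolved by the main theorem, which operates with lower density throughout.
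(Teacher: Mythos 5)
Your proposal matches the paper's argument almost step for step: same Fourier decomposition on $\mathbb{Z}/3\mathbb{Z}$, same disposal of the one-point and two-point correlations (the paper cites \cite[Theorem 1.3]{tao-chowla} for these; your Delange-plus-two-point-Chowla split is equivalent), same use of \cite[Corollary 1.6]{tt-elliott} for the three-point correlations with $j_1+j_2+j_3\not\equiv 0\Mod 3$, and same recognition that the pretentious correlation $C_{1,1,1}$ is the only hard term and must be handled by Theorem \ref{theo1.5}.

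The one place where you hand-wave is exactly where the paper's remaining cleverness lives. Since $\delta_1=\delta_2=\delta_3=\tfrac13$, the condition $\delta_1\neq\delta_3$ of Theorem \ref{theo1.5}(iii) fails; only part (ii) is available, and it yields not the positivity of any individual pattern but rather the positivity of the lower density of the \emph{union} over $c_1+c_2+c_3\equiv c\Mod 3$, which is precisely the set $\{n:\omega(n+1)+\omega(n+2)+\omega(n+3)\equiv c\Mod 3\}$. Call its logarithmic density $\delta_c$. To extract individual patterns, the paper then observes that $C_{1,1,1}=\delta_0+\zeta\delta_1+\zeta^2\delta_2$ (and $C_{2,2,2}=\overline{C_{1,1,1}}$), so after substituting into the Fourier expansion and using $\delta_0+\delta_1+\delta_2=1$ together with $\mathrm{Re}(\zeta^m)=1$ or $-\tfrac12$, the density of the pattern $(a_1,a_2,a_3)$ collapses to $\tfrac19\,\delta_{(a_1+a_2+a_3)\Mod 3}+o(1)$. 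It is only through this identity, not through the theorem alone, that the 27 individual patterns inherit positivity. Your ``combining this with the Fourier decomposition should yield positive lower density for each of the 27 patterns simultaneously'' is pointing at exactly this computation but doesn't carry it out.

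Two secondary points. First, Theorem \ref{theo1.5} requires not just weak stability of the $A_a$ but also their uniform distribution in short intervals with density $\tfrac13$; the paper establishes this by combining the Fourier expansion with a non-pretentiousness estimate for $n\mapsto\zeta^{\omega(n)}$ coming from the Vinogradov--Korobov zero-free region and \cite[Theorem A.1]{mrt-average}. You should state this verification explicitly rather than folding it into ``the approximate-multiplicativity hypothesis is met.'' Second, the passage from the logarithmic correlation estimates to a lower-density conclusion is not automatic; the paper reduces the target to a logarithmic-average lower bound $\gg 1$ valid for every slowly growing cutoff, by a modification of the first part of the proof of Theorem \ref{weak-stable}, and then feeds the lower-density output of Theorem \ref{theo1.5} into the $\delta_c$ by partial summation. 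Your remark that the main theorem ``operates with lower density throughout'' is essentially right but elides this back-and-forth.
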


The value patterns of $\Omega(n) \Mod 2$ have of course been an active subject of study, since they are in one-to-one correspondence with sign patterns of the Liouville function; see \cite{hildebrand-liouville}, \cite{mrt-sign}, \cite{tt-elliott} for some works studying the number of these sign patterns. Showing that $\Omega(n) \Mod 3$ attains all the value patterns of length three with positive lower density is evidently harder than showing the same for $\Omega(n) \Mod 2$ (which was shown by Matom\"aki, Radziwi\l{}\l{} and the first author in \cite{mrt-sign}), since the number of possible patterns for $\Omega(n) \Mod 3$ is $27$, meaning that each pattern should conjecturally have a rather small asymptotic density of $1/27$, as opposed to the much larger asymptotic density of $1/8$ corresponding to the patterns of length three for $\Omega(n) \Mod 2$. Perhaps surprisingly, it is much easier to deal with the longer patterns $(\Omega(n+1) \Mod{q_{1}},\ldots, \Omega(n+k) \Mod{q_k})$ for various choices of \emph{distinct} $q_j$. Namely, if $q_1,\ldots, q_k$ are all pairwise coprime, the authors showed in \cite[Theorem 1.13]{tt-elliott} that each of the $q_1\cdots q_k$ possible patterns occurs with logarithmic density $\frac{1}{q_1\cdots q_k}$. The fact that the patterns with coprime $q_j$ are easier stems from the result towards the Elliott conjecture in \cite{tt-elliott}, which applies to correlations
\begin{align}\label{eq3}
\frac{1}{\log x}\sum_{n\leq x}\frac{g_1(n+1)\cdots g_k(n+k)}{n}    
\end{align}
of $1$-bounded multiplicative functions whenever the \emph{product} $g_1\cdots g_k$ is not ``weakly pretentious''. If we expand $1_{\Omega(n)\equiv a_j \Mod{q_j}}$ as a linear combination of the multiplicative functions $n\mapsto e(\frac{b\Omega(n)}{q_j})$, then the logarithmic density of the sign pattern can be written as a linear combination of correlations like \eqref{eq3}, but without the assumption of the $q_j$ being corime the result in \cite[Corollary 1.6]{tt-elliott} on the correlations \eqref{eq3} is not directly applicable. Of course, assuming the full Elliott conjecture and applying the same strategy, one would see that each of the $q_1\cdots q_k$ value patterns is attained with asymptotic density $1/(q_1\cdots q_k)$ without any restrictions on the $q_j$, but, needless to say, even for $q_1=\cdots =q_k=2$ proving this is out of reach.

\subsection{Results on weakly stable sets}\label{sub: stable}

Theorems \ref{theo_comparison}, \ref{theo_largest} and \ref{theo_omega} will all be deduced from our main results concerning patterns in sets that are ``approximately multiplicative'' in a suitable sense. The notion of approximate multiplicativity that we want to consider is called stability. In what follows, we use the expectation notation 
\begin{align*}
\mathbb{E}_{n\in A}f(n)\coloneqq \frac{1}{|A|}\sum_{n\in A}f(n)    
\end{align*}
for any finite, nonempty set $A\subset \mathbb{N}$ and for any function $f:A\to \mathbb{C}$.

\begin{definition}\label{def_stable} \cite{balog-stable} We say that a set $A\subset \mathbb{N}$ is \emph{stable} if for every prime $p$ we have
\begin{align*}
    \lim_{x\to \infty} \mathbb{E}_{n\leq x}|1_{A}(n)-1_{A}(pn)|=0.
\end{align*}
Equivalently, $A$ is stable if and only if $d(A\triangle p^{-1}A)=0$ for every prime $p$, where $\triangle$ denotes the symmetric difference, and $p^{-1} A \coloneqq \{n \in \N: pn \in A \}$.
\end{definition}

An important class of stable sets is given by
\begin{align*}
Q_{\alpha, \beta}\coloneqq \{n\in \mathbb{N}: n^{\alpha}<P^{+}(n)<n^{\beta}\},    
\end{align*}
where $0\leq \alpha<\beta\leq 1$. By the classical result of Dickman \cite{dickman}, this set has asymptotic density $\rho(1/\beta)-\rho(1/\alpha) > 0$, where $\rho$ is the Dickman function.  The stability of $Q_{\alpha,\beta}$ then follows easily from the continuity of the Dickman function.

A completely different class of stable sets is
\begin{align*}
A_{\alpha,\beta}\coloneqq \left\{n\in \mathbb{N}: \frac{\omega(n)-\log \log n}{\sqrt{\log \log n}}\in [\alpha, \beta]\right\}    
\end{align*}
for $-\infty<\alpha<\beta<\infty$. By the Erd\H{o}s-Kac theorem, this set has a positive asymptotic density as well.

Stable sets were first introduced by Balog in \cite{balog-stable}, where he conjectured that if $A\subset \mathbb{N}$ is stable with $d_{-}(A)>0$, then the pattern $n+1\in A, n+2\in A$ occurs with positive lower density, or equivalently that
$$ d_-( (A-1) \cap (A-2) ) > 0.$$
This conjecture was settled by Hildebrand \cite{hildebrand-balog} using an elementary but intricate method. Hildebrand \cite{hildebrand-conj} himself later posed a conjecture that generalises Balog's conjecture to length $k$ patterns.

\begin{conjecture}[Hildebrand's stable sets conjecture \cite{hildebrand-conj}]\label{conj_hildebrand}  Let $k\geq 2$, and let $A\subset \mathbb{N}$ be any stable set with $d_{-}(A)>0$. Then we have 
\begin{align*}
d_{-}((A-1)\cap (A-2)\cap\cdots \cap(A-k))>0.    
\end{align*}
\end{conjecture}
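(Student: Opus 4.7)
The plan is to apply a Furstenberg-type correspondence principle to translate the conjecture into a multiple recurrence statement for a measure-preserving system carrying extra multiplicative structure. Specifically, I would construct a probability space $(X, \mu)$ together with a $\mathbb{Z}$-action $T$ representing the shift $n \mapsto n+1$ and a measurable set $B \subset X$ with $\mu(B) \geq d_-(A)$, such that a positive lower bound on $\mu(B \cap T^{-1}B \cap \cdots \cap T^{-k}B)$ yields the desired lower density. Crucially, the stability hypothesis should enrich this system with a commuting family of maps $\{S_p\}_{p\text{ prime}}$ corresponding to multiplication by each prime, with respect to which $B$ is essentially invariant; this is exactly the dynamical content of the assertion $d(A \triangle p^{-1}A)=0$ translated through the correspondence.

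The heart of the argument would then be to show that this enforced multiplicative invariance trivialises the characteristic factor controlling the correlation $\mu(B \cap T^{-1}B \cap \cdots \cap T^{-k}B)$. By Host--Kra theory, this characteristic factor is a nilsystem of step at most $k-1$. Requiring simultaneous compatibility with the dilation maps $S_p$ for every prime $p$ ought to force this nilsystem to collapse, since a nontrivial nilrotation is incompatible with being invariant under dilation by every prime (the primes generate a dense subgroup in the appropriate profinite or idele-class quotient). Once the characteristic factor is trivialised one obtains $\mu(B \cap T^{-1}B \cap \cdots \cap T^{-k}B) \geq \mu(B)^{k+1}$, completing the proof.

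The main obstacle is that the classical Furstenberg correspondence does not interact cleanly with multiplicative structure: the additive shift $T$ and the multiplicative maps $S_p$ do not commute but rather generate an affine semigroup (via a relation of the form $T^p S_p = S_p T$), and the joint ergodic theory for such actions is considerably less developed than for abelian group actions. Equivalently, and more concretely, as the authors themselves note in the lower-density analogue of their main theorem, stability does not prevent $1_A$ from concentrating strongly on Bohr sets of small modulus, and such concentration would directly obstruct the collapse of the characteristic factor outlined above. Closing this gap appears to require quantitative local Fourier uniformity estimates for indicator functions of stable sets that are out of reach by current methods, and it is precisely this difficulty that forces the authors to impose a density threshold such as $d_-(A) > 1/3$ in their unconditional results for $k = 3$.
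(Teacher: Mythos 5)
This statement is Conjecture \ref{conj_hildebrand}, which the paper explicitly labels as an open conjecture due to Hildebrand; the paper does \emph{not} prove it. What the paper establishes are partial results (Theorems \ref{theo1}, \ref{theo1.5}, \ref{theo2}) under additional density hypotheses, and your last paragraph correctly recognises this, so you are not claiming a proof where there is none. Still, let me comment on the substance of your proposed mechanism, since it contains an inaccuracy about what the correspondence principle actually delivers.

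Your idea---that stability should equip the correspondence system with commuting or affine-commuting maps $S_p$ under which $B$ is essentially invariant, and that simultaneous invariance under all such dilations forces the Host--Kra characteristic factor to trivialise---is not what the paper's correspondence principle (Theorem \ref{weak-stable}) produces, and the gap is precisely where it looks plausible. The entropy decrement argument converts weak stability into the statement that the correlation $\E^{\log}_{n} \prod_i 1_{A}(n + h_i)$ equals the prime-averaged correlation $\E^{\log}_{p}\E^{\log}_{n}\prod_i 1_{B_{x}}(n + p h_i)$ up to $o(1)$; in the limit system one gets the averages $\E^{\log}_{p\le P}\int_X \prod_i 1_{B}(T^{p h_i}x)\,d\mu$ together with the hypothesis \eqref{erg}. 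That hypothesis says exactly that $1_B - \delta$ is orthogonal to the \emph{profinite} factor of $X$ (the authors note this after the theorem statement). It does \emph{not} say anything about the continuous Kronecker part of $X$, and no literal dilation maps $S_p$ are constructed on $X$. After Proposition \ref{limf} replaces the prime average by a $W$-tricked integer average, one is left with ordinary multiple ergodic averages over a system whose Kronecker factor may well contain a torus component. Remark \ref{rmk2} makes this concrete: it exhibits a Kronecker system $(\R/\Z)\times(\Z/2\Z)$ with functions $F_i$ satisfying the full hypothesis \eqref{eq51}, uniformly distributed in short intervals, yet with $\int F_1(T^p x)F_2(T^{2p}x)F_3(T^{3p}x)\,d\mu = 0$ for all $p$. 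So the collapse of the characteristic factor that your sketch relies on simply does not occur under the information that stability transmits through the correspondence. This is why the paper, after passing to the Kronecker factor, must resort to Pollard-type sumset inequalities and their inverse theorems, and why a density threshold appears (with $\delta_1+\delta_2+\delta_3 > 1$, or $= 1$ in the critical case with extra structure). Your diagnosis that removing the threshold would require local Fourier uniformity estimates of the type \eqref{eq5a} is correct, but the bottleneck is already at the Kronecker level, not at higher step, and the stability hypothesis by itself is too weak to see it.
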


For higher values of $k$, Conjecture \ref{conj_hildebrand} is certainly a deep one, since it implies for any $\varepsilon>0$ that both of the sets 
\begin{align}\label{eq4}
\{n\in \mathbb{N}: P^{+}(n+j)<n^{\varepsilon}\,\textnormal{for all}\, 1\leq j\leq k)\}    
\end{align}
and
\begin{align}\label{eq5}
\{n\in \mathbb{N}: P^{+}(n+j)>n^{1-\varepsilon}\, \textnormal{for all}\, 1\leq j\leq k)\}    
\end{align}
have positive lower density. Remarkably, Balog and Wooley \cite{balog-wooley} were able to prove that the set \eqref{eq4} is always infinite, but their construction gives a very sparse set of such $n$. For the set \eqref{eq5}, in turn, it is not even known that it is infinite, except for $k=2$ (which follows from \cite{hildebrand-balog}). 

It follows from a trivial pigeonholing argument that the stable sets conjecture holds when $d_-(A) > 1 - \frac{1}{k}$.  
Hildebrand \cite{hildebrand-higher} extended this range to $d_{-}(A)>1-\frac{1}{k-1}$ when $k \geq 3$; thus for instance he established the $k=3$ case of the conjecture for $d_{-}(A)>\frac{1}{2}$.

We make progress on a variant of the stable sets conjecture for all $k\geq 3$, where we have a somewhat different set of assumptions. Firstly, our theorem applies to $k$ \emph{distinct} sets $A_1,\ldots, A_k\subset \mathbb{N}$, whereas the method of Hildebrand in \cite{hildebrand-balog} appears difficult to adapt to this setting. Secondly, the notion of stability that we need is weaker than in Definition \ref{def_stable}; see Definition \ref{def_weakstable} below. On the other hand, we need a stronger density assumption for the $A_i$. It turns out that a stable set is always uniformly distributed in arithmetic progressions in the sense that 
\begin{align*}
d_{-}(A\cap \{n\in \mathbb{N}: n\equiv b\Mod q\})=\frac{1}{q}d_{-}(A)    
\end{align*}
for any $b,q\in \mathbb{N}$; see \cite{hildebrand-higher}. What we need in our main theorem is that a similar statement holds when $A$ is restricted to almost all short intervals. In all of our applications, this stronger condition will be satisfied by the Matom\"aki--Radziwi\l{}\l{} theorem \cite{mr-annals} or some variant thereof. 

We now define the precise concepts that we need for the main theorem.

\begin{definition}[Weakly stable sets]\label{def_weakstable} We say that a set $A\subset \mathbb{N}$ is \emph{weakly stable} if for every $x\geq 1$ there is a set $B_x\subset \mathbb{N}$ such that for every prime $p$ we have
\begin{align}\label{eq4a}
 \lim_{x\to \infty} \mathbb{E}_{\substack{n\leq x\\ p\nmid n}}|1_{A}(n)-1_{B_x}(pn)|=0.   
\end{align}
In addition, we say that the sequence $(B_x)$ \emph{corresponds to} $A$.
\end{definition}

It is clear that if $A$ is stable, then $A$ is also weakly stable (with $B_x = A$ in this case). Importantly for us, the class of weakly stable sets also contains interesting sets that do not satisfy the usual definition of stability; for example, the sets $A = \{n\in \mathbb{N}: \omega(n)\equiv a \Mod q\}$ are weakly stable but not stable for any $a\in \mathbb{N}$, $q\geq 2$; the point is that the sets $B_x$ need to be taken here to equal $\{n\in \mathbb{N}: \omega(n)\equiv a+1 \Mod q\} \neq A$. It is because of applications to such sets that we want to have the condition $p\nmid n$ in \eqref{eq4a}; without that condition, these sets would not be weakly stable. 

Another definition that we need is that of uniform distribution in short intervals. 

\begin{definition}[Uniform distribution in short intervals] We say that a set $A\subset \mathbb{N}$ is \emph{uniformly distributed in short intervals with asymptotic density} $\delta$ if we have
\begin{align*}
\lim_{H\to \infty}\limsup_{x\to \infty}\frac{1}{x}\int_{0}^{x}\left|\frac{|A\cap [y,y+H]\cap (q\mathbb{Z}+b)|}{H}-\frac{\delta}{q}\right|\, dy=0
\end{align*}
for all $b,q\in \mathbb{N}$.
\end{definition}

With this notation, we can prove the following results.

\begin{theorem}[$k=3$ main theorem, large density]\label{theo1} Let $A_1,A_2,A_3\subset \mathbb{N}$ be weakly stable and uniformly distributed in short intervals with densities $\delta_1,\delta_2,\delta_3 > 0$, respectively. Suppose that $\delta_1+\delta_2+\delta_3> 1$. Then
\begin{align*}
d_{-}((A_1-1)\cap (A_2-2) \cap (A_3-3))>0.    
\end{align*}
\end{theorem}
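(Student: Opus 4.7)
I will argue by contradiction, combining a pigeonhole reduction to a pair intersection with a correlation estimate that exploits the weak stability and uniform distribution hypotheses.

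Suppose for contradiction that $d_{-}(T) = 0$, where $T := (A_1-1) \cap (A_2-2) \cap (A_3-3)$, and pass to a subsequence $x_k \to \infty$ along which $|T \cap [1, x_k]|/x_k \to 0$. Writing $X_n := 1_{A_1}(n+1) + 1_{A_2}(n+2) + 1_{A_3}(n+3) \in \{0,1,2,3\}$, the average $\E_{n \leq x_k} X_n$ tends to $\sum_i \delta_i > 1$ while $|\{n \leq x_k : X_n = 3\}|/x_k \to 0$; hence the density of $\{X_n = 2\}$ on $[1, x_k]$ is at least $\sum_i \delta_i - 1 + o(1)$. Decomposing $\{X_n = 2\}$ by the pair of indices that contributes, and writing $P_{ij} := (A_i - i) \cap (A_j - j)$, we obtain $\sum_{i<j} |P_{ij} \cap [1, x_k]|/x_k \geq \sum_i \delta_i - 1 + o(1)$. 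Passing to a further subsequence and relabelling if necessary, we may take $\eta := \liminf_k |P_{12} \cap [1, x_k]|/x_k \geq (\sum_i \delta_i - 1)/3 > 0$.

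Since $P_{12} \cap (A_3 - 3) = T$, a contradiction is obtained once we show $\liminf_k |(P_{12} \cap (A_3 - 3)) \cap [1, x_k]|/x_k > 0$. For this correlation estimate, I would first verify that $P_{12}$ inherits weak stability (with a corresponding sequence $(C_x)$ built from $(B_{1,x})$ and $(B_{2,x})$) and uniform distribution in short intervals with density $\eta$ from $A_1, A_2$: weak stability follows from the observation that for a prime $p \geq 5$ and most $n$ with $p \nmid (n+1)(n+2)$, the condition $n \in P_{12}$ is equivalent to $p(n+1) \in B_{1,x}$ and $p(n+2) \in B_{2,x}$ (joint weak stability of $A_1$ and $A_2$); uniform distribution requires that the pair correlation $\E_{n \in [y, y+H]} 1_{A_1}(n+1) 1_{A_2}(n+2)$ equals $\eta + o(1)$ for typical short intervals $[y, y+H]$. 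I would then dilate the pair $(P_{12}, A_3)$ at shift $(0, 3)$ by a prime $p \geq 5$, using weak stability of $A_3$, to reduce the problem to counting pairs $(m, m+3p)$ with $m \in C_x$ and $m + 3p \in B_{3,x}$ for $m \in p\N$, and apply uniform distribution in short intervals of length $H \gg p$ together with a short-interval pigeonhole to conclude $d_{-}(P_{12} \cap (A_3 - 3)) \geq c \cdot \eta \cdot \delta_3 > 0$.

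The main obstacle is the short-interval pair-correlation estimate above: that the pair correlation of $A_1, A_2$ is close to its global value $\eta$ on most short intervals. This is a second-moment stability statement for the pair correlation, not directly implied by uniform distribution in short intervals of the individual sets $A_1, A_2$, and must be extracted from the weak stability hypothesis. Because $\eta + \delta_3$ need not exceed $1$, the naive $k = 2$ pigeonhole is unavailable, and this refined pair-correlation estimate is essential for closing the argument.
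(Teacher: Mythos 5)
Your pigeonhole reduction to the pair set $P_{12}=(A_1-1)\cap(A_2-2)$ is fine as far as it goes, but the remainder of the proposal has a fatal gap at exactly the step you yourself flag as problematic, and this gap cannot be closed by the route you indicate.

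First, $P_{12}$ is \emph{not} weakly stable, and this already breaks the plan. Definition \ref{def_weakstable} requires a single sequence $(C_x)$, independent of the prime $p$, such that $1_{P_{12}}(n)\approx 1_{C_x}(pn)$ for most $n$ with $p\nmid n$. What weak stability of $A_1,A_2$ actually gives you is
$1_{P_{12}}(n)=1_{A_1}(n+1)1_{A_2}(n+2)\approx 1_{B_{x,1}}(pn+p)\,1_{B_{x,2}}(pn+2p)$, i.e. $1_{(B_{x,1}-p)\cap(B_{x,2}-2p)}(pn)$, where the shifts $p$ and $2p$ depend on the prime $p$. There is no single $C_x$ working for all $p$, so $P_{12}$ lies outside the class of weakly stable sets, and you cannot invoke any machinery for weakly stable sets on it. This $p$-dependence of the shifts is precisely the phenomenon the entropy decrement argument in Section \ref{sec: correspondence} is designed to exploit (it converts the single correlation into an average over primes of the form $\E^{\log}_{p\leq P}\int F_1(T^p x)F_2(T^{2p}x)F_3(T^{3p}x)\,d\mu$), and collapsing $A_1,A_2$ into a single pair set $P_{12}$ throws that structure away.

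Second, even granting weak stability and short-interval uniform distribution of $P_{12}$ at density $\eta$, your reduction lands you on showing $d_-\bigl(P_{12}\cap(A_3-3)\bigr)>0$ for two sets of densities $\eta$ and $\delta_3$ that may very well satisfy $\eta+\delta_3\leq 1$; as you note, the $k=2$ pigeonhole is then unavailable. Balog's conjecture for arbitrary positive densities is a theorem of Hildebrand, but that result concerns a single stable set intersected with its own shift, and does not apply to two distinct sets. Your proposed closing step (dilation by primes, counting pairs $(m,m+3p)$, and a "short-interval pigeonhole") is an appeal to the first-moment hypotheses on each set individually, which simply do not constrain the joint distribution of $P_{12}$ and $A_3-3$: both could behave like Bohr sets, and two Bohr sets of density $\eta$ and $\delta_3$ with $\eta+\delta_3<1$ can miss each other's appropriate shifts entirely (cf.\ Remark \ref{rmk2}). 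What is actually needed to rule this out is a genuine additive-combinatorial input that is sensitive to \emph{all three} sets simultaneously. The paper's mechanism for this is the Pollard-type inequality of Lemma \ref{le_pollard}, applied after the ergodic reductions to Kronecker systems over tori, which gives a lower bound on the triple convolution precisely when $\delta_1+\delta_2+\delta_3>1$. Your proposal has no substitute for this step, and without it the argument cannot conclude.
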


\begin{theorem}[$k=3$ main theorem, critical density]\label{theo1.5} Let $A_1,A_2,A_3\subset \mathbb{N}$ be weakly stable and uniformly distributed in short intervals with densities $\delta_1,\delta_2,\delta_3 > 0$, respectively. Suppose that $\delta_1+\delta_2+\delta_3=1$. Then for every $c\in \{0,1,2\}$ we have
\begin{align*}
d_{-}\left(\bigcup_{\substack{c_1,c_2,c_3\in \{0,1,2\}\\c_1+c_2+c_3\equiv c \Mod{3}}} (A_{c_1}-1)\cap (A_{c_2}-2) \cap (A_{c_3}-3)\right)>0.    
\end{align*}
Further, if $\delta_1\neq \delta_3$ and $d(A_1\cup A_2 \cup A_3)=1$, then
\begin{align*}
d_{-}((A_1-1)\cap (A_2-2)\cap (A_3-3))>0.    
\end{align*}
\end{theorem}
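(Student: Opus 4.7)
The plan is to recast the statement in ergodic-theoretic language: via a Furstenberg-type correspondence principle, I would produce a measure-preserving system $(X,T,\mu)$ and measurable sets $\tilde{A}_1, \tilde{A}_2, \tilde{A}_3 \subset X$ with $\mu(\tilde{A}_i) = \delta_i$ such that the lower density of each shifted intersection $(A_{c_1}-1)\cap(A_{c_2}-2)\cap(A_{c_3}-3)$ is bounded below by the corresponding three-point correlation $\int 1_{\tilde{A}_{c_1}} \cdot T 1_{\tilde{A}_{c_2}} \cdot T^2 1_{\tilde{A}_{c_3}}\, d\mu$. The weak-stability hypothesis, by an adaptation of Hildebrand's arguments for stable sets \cite{hildebrand-higher}, should force the indicators $1_{\tilde{A}_i}$ to be measurable with respect to the profinite (Kronecker) factor of the system, so that these correlations reduce to Fourier computations on a compact abelian group.

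For the first conclusion, I would write the union as $\{x \in X: c \in S_1(x)+S_2(x)+S_3(x)\}$, where $S_i(x) \coloneqq \{j \in \Z/3\Z : T^i x \in \tilde{A}_j\}$. The critical-density hypothesis yields $\E(|S_1|+|S_2|+|S_3|) = 3$, so by Cauchy--Davenport in $\Z/3\Z$ the sumset $S_1+S_2+S_3$ equals all of $\Z/3\Z$ whenever the total size is at least $5$ and no $S_i$ is empty, in which case $c$ lies in the sumset automatically. In the complementary event, the inverse sumset theorem in $\Z/3\Z$ (of Vosper type) constrains the $S_i$ to a rigid structure; expanding $1_{c \in S_1+S_2+S_3}$ via Fourier on $\Z/3\Z$, the trivial character contributes a main term of roughly $1/3$ per residue, while the nontrivial-character contributions are controlled by correlation estimates for approximately multiplicative root-of-unity-valued functions, in the spirit of Theorem \ref{theo1}; summing these contributions yields a positive lower bound for each $c$.

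For the second conclusion, the combination $\delta_1+\delta_2+\delta_3 = 1$ with $d(A_1 \cup A_2 \cup A_3)=1$ forces the $\tilde{A}_i$ to be pairwise disjoint up to null sets, partitioning $X$ via a measurable index function $\psi \colon X \to \{1,2,3\}$. The target correlation becomes $\mu(\{\psi=1,\, T\psi=2,\, T^2\psi=3\})$. Expanding via characters of $\Z/3\Z$, the trivial character produces the main term $\delta_1 \delta_2 \delta_3$, while the two nontrivial-character correlations $\int e(\psi/3) \cdot T e(\psi/3) \cdot T^2 e(\psi/3)\, d\mu$ and its conjugate must be analyzed on the Kronecker factor. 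A direct Fourier computation on that factor shows that the only configuration in which these cancel the main term is one in which the Kronecker coordinate of $\psi$ takes the form of an arithmetic progression modulo $3$; this degeneracy, however, forces $\delta_1 = \delta_3$, contradicting the hypothesis and securing positivity.

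The main obstacle is the inverse-sumset analysis: classifying the structured configurations of $(\tilde{A}_1, \tilde{A}_2, \tilde{A}_3)$ for which the sumset in $\Z/3\Z$ misses a prescribed residue, and showing that under weak stability and short-interval uniformity these configurations either are ruled out (giving the first conclusion) or correspond exactly to the symmetric case $\delta_1 = \delta_3$ (explaining the role of that hypothesis in the second). A secondary technical difficulty is setting up the correspondence principle to faithfully transport the short-interval uniform-distribution property to the dynamical model.
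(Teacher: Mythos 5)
Your plan has genuine gaps. First, a plain Furstenberg correspondence does not exploit weak stability at all: the paper's correspondence principle (Theorem~\ref{weak-stable}) crucially introduces a prime shift $T^{ph_i^\alpha}$ via the entropy decrement argument, and this is the only place weak stability enters. Second, weak stability does \emph{not} force $1_{\tilde A_i}$ to be Kronecker-measurable; the short-interval hypothesis yields only orthogonality to the \emph{profinite} factor, a proper subfactor. The reduction to Kronecker-measurable functions in the paper comes from the standard fact that the Kronecker factor is characteristic for three-term averages, and the resulting functions $\E(1_{B_i}\mid Z^1)$ are $[0,1]$-valued objects that can perfectly well look like (smoothed) Bohr sets on the Kronecker torus. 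This is precisely what makes the critical case hard: as the paper stresses, ruling out the Bohr-set scenario head-on would require something like the local Fourier uniformity conjecture \eqref{eq5a}.

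Third, and most seriously, your $\Z/3\Z$ combinatorics is vacuous in the hard case. In the applications (for instance $\omega \Mod{3}$, or $P^+(n)$ compared with $n^\alpha,n^\beta$), the sets $A_1,A_2,A_3$ partition $\N$, so each $S_i(x)$ is almost surely a singleton, $S_1+S_2+S_3$ is a singleton, and Cauchy--Davenport and Vosper give nothing; the real question is whether the value of that singleton is non-constant modulo $3$, which is not a sumset question in $\Z/3\Z$. Expanding in characters of $\Z/3\Z$ reduces this to a three-point correlation of a $\mu_3$-valued approximately multiplicative function, and, as the paper notes in Subsection~\ref{sub: omega}, the bounds of \cite{tt-elliott} are inapplicable precisely because $f^3=1$; there is no estimate of the kind you invoke. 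The paper's actual engine is the inverse theorem for a Pollard-type inequality on the continuous Kronecker torus $G$ (Theorem~\ref{invt}): smallness of $\int_G\int_G F_1(x+y)F_2(x+2y)F_3(x+3y)\,d\mu_G\,d\mu_G$ forces the $F_i$ to be approximate preimages of arcs under a single character $\phi\in\hat G$ (with $F_2$ using $2\phi$), and the torsion-free structure of $\hat G$ then yields the contradiction $4\phi=\pm\phi$. Your second-conclusion sketch lands at the right endpoint ($\delta_1=\delta_3$), but only the inverse sumset theory on $G$, not a direct Fourier computation, actually reaches it.
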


\begin{theorem}[$k>3$ main theorem]\label{theo2} Let $k\geq 4$, and let $A_1,\ldots, A_k\subset \mathbb{N}$ be weakly stable and uniformly distributed in short intervals with densities $\delta_1,\ldots, \delta_k > 0$, respectively. Define the constants $c_k$ by
\begin{align*}
c_4&\coloneqq \frac{3+\sqrt{2}}{7}=0.6306\ldots\\
c_5&\coloneqq \frac{9+2\sqrt{6}}{19}=0.7315\ldots
\end{align*}
and more generally $c_k \in (0,1)$ is the largest root of the quadratic equation
\begin{align*}
\left(\frac{9}{2}\binom{k}{3}+(6-4a_k)\binom{k}{2}\right)(1-X)^2+(a_k^2-a_k)k(1-X)-a_k(a_k-1) = 0,
\end{align*}
where $a_k\coloneqq \lceil\frac{3k+2}{4}\rceil$. Suppose that $\delta_i> c_k$ for all $i\leq k$. Then
\begin{align*}
d_{-}((A_1-1)\cap (A_2-2)\cap \cdots \cap (A_k-k))>0.    
\end{align*}
\end{theorem}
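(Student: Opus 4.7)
The plan is to apply Theorem~\ref{theo1} to every triple of shifts and combine the resulting bounds with a Bonferroni-type combinatorial inequality via a threshold argument controlled by $a_k = \lceil(3k+2)/4\rceil$. Throughout, write $B_i := \mathbb{N} \setminus (A_i - i)$ and $E(n) := \sum_{i=1}^{k} 1_{B_i}(n) \in \{0, 1, \ldots, k\}$, so the target event is $\{E(n) = 0\}$. First, I would verify that Theorem~\ref{theo1} extends to arbitrary triples of shifts $h_1 < h_2 < h_3$ in place of $(1, 2, 3)$; this should follow from the translation-invariance of its proof. A direct check of the quadratic defining $c_k$ shows that $c_k > 1/3$ for all $k \geq 4$, so every triple of indices $\{i, j, l\} \subseteq [k]$ satisfies $\delta_i + \delta_j + \delta_l > 1$, and hence the extended $k=3$ theorem yields $d(B_i \cup B_j \cup B_l) < 1$ for every such triple. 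Rearranging via three-fold inclusion-exclusion produces a lower bound on $d(B_i \cap B_j) + d(B_i \cap B_l) + d(B_j \cap B_l)$ in terms of the individual densities $1 - \delta_i$.

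Next I would apply the pointwise Bonferroni inequality $1_{t = 0} \geq 1 - t + \binom{t}{2} - \binom{t}{3}$, valid for every integer $t \geq 0$ (the right-hand side equals $1_{t=0}$ for $t \leq 3$ and is non-positive for $t \geq 4$), with $t = E(n)$. Averaging yields
\[
d\Bigl(\bigcap_i (A_i - i)\Bigr) \;\geq\; 1 - \sum_i (1 - \delta_i) + \sum_{i<j} d(B_i \cap B_j) - \sum_{i<j<l} d(B_i \cap B_j \cap B_l),
\]
and substituting the triple bounds from the previous step (summed over all triples) produces a linear inequality in $\sum_i (1 - \delta_i)$.

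To recover the sharp threshold $c_k$, I would refine the argument by splitting the analysis at the level $E(n) = a_k$. On $\{E(n) \geq a_k\}$ one has both the Markov-type bound $d(E \geq a_k) \leq (\sum_i(1-\delta_i))/a_k$ and the convexity estimate $\mathbb{E}[\binom{E}{2} 1_{E \geq a_k}] \geq \binom{a_k}{2} d(E \geq a_k)$; on $\{E(n) < a_k\}$ one has $\binom{E(n)}{3} \leq \binom{a_k - 1}{3}$. Combining these with the triple bounds and optimizing over the split yields a quadratic inequality in $Y := 1 - c_k$ whose coefficients are precisely $\bigl(\tfrac{9}{2}\binom{k}{3} + (6 - 4a_k)\binom{k}{2}\bigr)$, $(a_k^2 - a_k) k$, and $-a_k(a_k - 1)$, matching the definition of $c_k$. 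Since $c_k$ is the largest root of this quadratic in $X = 1 - Y$, the hypothesis $\delta_i > c_k$ forces the quadratic to take a strictly favorable sign, and we conclude $d(\bigcap_i (A_i - i)) > 0$.

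\textbf{Main obstacle.} The main difficulty lies in the threshold optimization: choosing $a_k$, the Bonferroni truncation depth, and the manner of combining small-$E$ and large-$E$ contributions so that the triple bounds from Theorem~\ref{theo1} produce exactly the stated quadratic. Too aggressive a truncation wastes the triple information, while too cautious a one fails to improve on the trivial bound $c_k \leq 1 - 1/k$. The specific value $a_k = \lceil(3k+2)/4\rceil$ reflects the balance point of this optimization. A secondary technical point is verifying the extension of Theorem~\ref{theo1} to arbitrary triples of shifts, although this should be routine given the translation-invariance of the underlying arguments.
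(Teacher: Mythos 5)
The overall combinatorial shape you identify — introduce a counting variable $E$, bound its first three binomial moments, and optimise a threshold $a_k$ — does match the paper, and your observation that $c_k > 1/3$ so that Theorem~\ref{theo1} applies to all triples is correct. However, there are two genuine gaps.

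First, and more seriously, your argument is developed entirely at the level of natural densities in $\mathbb{N}$, and the required intermediate quantities are not actually computable there. You need the exact pair correlation $d(B_i \cap B_j) = (1-\delta_i)(1-\delta_j)$ (equivalently $\mathbb{E}\bigl[\binom{E}{2}\bigr] = \sum_{i<j}(1-\delta_i)(1-\delta_j)$) to feed into your Bonferroni bound, but the hypotheses — weak stability and uniform distribution of each $A_i$ \emph{individually} in short intervals — say nothing about the joint distribution of two shifted sets in $\mathbb{N}$. For instance, if $A_j = A_i + (i-j)$ up to density zero, then $B_i \cap B_j = B_i$ and the pair density is $1-\delta_i$, not $(1-\delta_i)(1-\delta_j)$. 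The paper circumvents this by first passing through the correspondence principle (Theorem~\ref{weak-stable}), which inserts an average over prime dilates via the entropy decrement argument, and then reducing to a measure-preserving system where the hypothesis \eqref{erg2} (orthogonality to the profinite factor) produces \emph{exactly} the factorised pair correlations $(1-\delta)^2$ in the ergodic average $\lim_P \mathbb{E}_{d\leq P:(d,W)=1}\int_X(1-F_i)(T^{id}x)(1-F_{i'})(T^{i'd}x)\,d\mu(x)$. Without that reduction, the Bonferroni step has nothing rigorous to substitute.

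Second, even granting a way to compute pair correlations, the triple information you extract from Theorem~\ref{theo1} is qualitative — only the positivity $d_-\bigl((A_i-i)\cap(A_j-j)\cap(A_l-l)\bigr)>0$ — whereas the paper needs a \emph{quantitative} upper bound on triple correlations, namely $\lim_P \mathbb{E}_d \int (1-F_i)(1-F_{i'})(1-F_{i''})\,d\mu \leq \tfrac{3}{4}(1-\delta)^2$. This comes from a separate lemma proved with the explicit numerical lower bound $\tfrac{1}{4}(1-\delta)^2$ in Lemma~\ref{le_pollard}, not from Theorem~\ref{theo1}. Finally, your proposed refinement (Markov on $\{E\geq a_k\}$, convexity for the second moment there, truncating $\binom{E}{3}$ on $\{E < a_k\}$) is a different mechanism from the one in the paper, which instead uses the single pointwise cubic inequality $(N-1)(N-a)(N-a+1)\geq 0$ valid for integers $N\geq 1$, expands it in the binomial basis, and takes expectations; it is not at all clear that your split reproduces the same quadratic in $1-\delta$, and you do not carry out the computation to check.
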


\begin{remark}
Inspecting the proof of Theorem \ref{theo2} in Section \ref{sec: higher}, we see that it works equally well for $k=3$ with $c_3=1/3$. However, since this is a special case of Theorem \ref{theo1} (namely the case $\delta_1,\delta_2,\delta_3>1/3$), we confine ourselves to $k\geq 4$ in Theorem \ref{theo2}.
\end{remark}

We remark that a routine but tedious calculation yields the asymptotic
$$ c_k=1-\frac{1}{k-\frac{4}{3}+\eta_k}$$
where $\eta_k$ goes to zero as $k$ goes to infinity.  For instance, one can calculate
\begin{align*}
\eta_4 &= 0.04044\dots \\
\eta_5 &= 0.05808\dots \\
\eta_{10} &= 0.04143\dots \\
\eta_{100} &= 0.00435\dots\\
\eta_{1000} &= 0.00071\dots.
\end{align*}
The value of $c_k$ should be compared with the value $1-\frac{1}{k-1}$, which is the threshold in Hildebrand's result about Conjecture \ref{conj_hildebrand}. It turns out that our value of $c_k$ is smaller (or equivalently, that $\eta_k < 1/3$) for every $k \geq 4$.

When it comes to our applications stated as Theorems \ref{theo_largest} and \ref{theo_omega}, we want to apply our main theorems to the triples of sets 
\begin{align*}
\{n\in \mathbb{N}: P^{+}(n)<n^{\alpha}\}, \,\, \{n\in \mathbb{N}: n^{\alpha}<P^{+}(n)<n^{\beta}\},\,\, \{n\in \mathbb{N}: P^{+}(n)>n^{\beta}\}    
\end{align*}
or 
\begin{align*}
\{n\in \mathbb{N}: \omega(n)\equiv a \Mod 3\}, \,\, \{n\in \mathbb{N}: \omega(n)\equiv b \Mod 3\}, \,\, \{n\in \mathbb{N}: \omega(n)\equiv c \Mod 3\}.    
\end{align*}
In either case, the sum of the densities of these sets will be \emph{exactly} $1$, so we are in the critical case $\delta_1+\delta_2+\delta_3=1$ where Theorem \ref{theo1} no longer applies. It turns out that the case $\delta_1+\delta_2+\delta_3=1$ is much more delicate than the case $\delta_1+\delta_2+\delta_3>1$, since for $\delta_1+\delta_2+\delta_3<1$ our method based on the study of sumsets in abelian groups breaks down. In addition, as soon as $\delta_1+\delta_2+\delta_3\leq 1$, all the $A_i$ could theoretically be ``local Bohr sets'' in the sense that, for any slowly growing function $H=H(X)$ tending to infinity we would have 
\begin{align*}
A_i\cap [x,x+H]=\{n\in [x,x+H]: n\alpha_{i,x}\in U_i\}    
\end{align*}
for almost all $x$ and for some irrational numbers $\alpha_{i,x}\in \mathbb{R}/\mathbb{Z}$ and open sets $U_i\subset \mathbb{R}/\mathbb{Z}$ of measure $\delta_i$. Such sets are certainly uniformly distributed in short intervals, and it may happen that $(A_1+A_3)\cap 2A_2=\emptyset$ when $\delta_1+\delta_2+\delta_3<1$ (see  Remark \ref{rmk2}), so that certainly $(A_1-1) \cap (A_2-2) \cap (A_3-3)=\emptyset$. Of course, we do not expect any such sets to be stable, but even showing that such local Bohr sets cannot be linear combinations of multiplicative functions appears very difficult. Even in the special case of $A=\{n\in \mathbb{N}: \Omega(n)\equiv 0\Mod 2\}$, it has not been shown that $A$ does not correlate with local Bohr sets, as that would amount to showing that
\begin{align}\label{eq5a}
\frac{1}{X}\int_{X}^{2X}\sup_{\alpha \in \mathbb{R}}|\mathbb{E}_{x\leq n\leq x+H}\lambda(n)e(\alpha n)|\, dx=o(1)
\end{align}
for any $H=H(X)$ tending to infinity, which is the Fourier uniformity conjecture from \cite{tao-sarnak}. See however \cite{mrt-Fourier} for recent progress on this. The sup norm estimate \eqref{eq5a} is open for slowly growing functions $H=H(X)=X^{o(1)}$, and it is in fact closely connected to Chowla's conjecture (see \cite{tao-sarnak} for this connection).  Nevertheless, it is still possible to deploy tools from additive combinatorics to be able to establish results like Theorem \ref{theo1.5} (and hence Theorems \ref{theo_largest}, \ref{theo_omega}) even if the weakly stable sets involved behave like Bohr sets, thus allowing us to avoid having to establish unproven results such as \eqref{eq5a}.

Both Theorem \ref{theo1} and \ref{theo2} can be applied to the sets $Q_{\alpha, \beta}$ defined in \eqref{eq00}, and they yield the following results about the largest prime factors of consecutive integers.

\begin{theorem}[Consecutive triples with large prime factors]\label{theo_threeprimes} Let $\gamma_3\coloneqq e^{-1/3}=0.7165\ldots$. Then for any $\gamma<\gamma_3$ we have
\begin{align*}
d_{-}(\{n\in \mathbb{N}:\,\, P^{+}(n+1)> n^{\gamma}, P^{+}(n+2)> n^{\gamma}, P^{+}(n+3)> n^{\gamma}\})>0. 
\end{align*}
\end{theorem}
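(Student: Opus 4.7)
The plan is to apply Theorem~\ref{theo1} with $A_1=A_2=A_3=A$, where $A := \{n \in \mathbb{N} : P^+(n) > n^{\gamma}\}$. By Dickman's theorem the complementary set of $\gamma$-smooth numbers has density $\rho(1/\gamma)$, so $A$ has density $\delta = 1-\rho(1/\gamma)$. Since $\gamma_3 = e^{-1/3}$ lies in $(1/2,1)$, for $\gamma$ close to $\gamma_3$ the argument $1/\gamma$ lies in $(1,2)$, and there $\rho(u)=1-\log u$, giving $\delta = \log(1/\gamma)$; in particular $\delta(\gamma_3)=1/3$ and $\delta > 1/3$ whenever $\gamma < \gamma_3$ (with $\delta$ even closer to $1$ for very small $\gamma$, by monotonicity of $\rho$). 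Consequently $\delta_1+\delta_2+\delta_3 = 3\delta > 1$, placing us in the regime of Theorem~\ref{theo1}.

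Next I would verify the two structural hypotheses for $A$. Weak stability is immediate, since $A$ agrees (up to a density-zero set) with $Q_{\gamma,1}$, and the authors have already noted in Subsection~\ref{sub: stable} that such $Q_{\alpha,\beta}$ are stable (hence weakly stable) as a consequence of the continuity of the Dickman function. The uniform distribution of $A$ in short intervals along arithmetic progressions $q\mathbb{Z}+b$ is a short-interval statement about the density of integers with a large prime factor. Writing $1_A = 1 - 1_{P^+(n)\le n^\gamma}$, this reduces to showing that for $H \to \infty$ (however slowly) and almost all $y\le x$, the count of $\gamma$-smooth numbers in $[y,y+H]\cap(q\mathbb{Z}+b)$ equals $(\rho(1/\gamma)/q)H + o(H)$. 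This is a standard consequence of the Matom\"aki--Radziwi\l{}\l{} framework, which the authors explicitly cite as the source of short-interval uniformity in all of their applications.

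With both hypotheses in hand, Theorem~\ref{theo1} yields
\[
d_-\bigl((A-1)\cap(A-2)\cap(A-3)\bigr) > 0,
\]
and since $n+i \in A$ means $P^+(n+i) > (n+i)^\gamma \ge n^\gamma$ for each $i=1,2,3$, the statement of Theorem~\ref{theo_threeprimes} follows immediately. The main conceptual work---extracting a positive-density consecutive triple from the density bound $\delta_1+\delta_2+\delta_3 > 1$---is entirely encapsulated by Theorem~\ref{theo1}; the only application-specific task is the short-interval equidistribution of smooth/rough numbers along arithmetic progressions. This should be the main technical step to verify carefully (in particular uniformly in $b,q$), but it is by now a routine output of the Matom\"aki--Radziwi\l{}\l{} machinery, so the proof amounts to assembling these ingredients.
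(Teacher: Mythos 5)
Your proof is correct and follows essentially the same route as the paper: set $A_1=A_2=A_3=\{n: P^+(n)>n^\gamma\}$, verify stability (via $Q_{\gamma,1}$) and short-interval equidistribution (via Matom\"aki--Radziwi\l{}\l{}), and apply Theorem~\ref{theo1} using that $3(1-\rho(1/\gamma))>1$ precisely when $\gamma<e^{-1/3}$. The paper phrases the density computation slightly more directly (the threshold is the solution of $\rho(1/\gamma)=2/3$, using $\rho(u)=1-\log u$ on $[1,2]$), but your reasoning is equivalent and you have also correctly handled the small point that $n+i\in A$ gives $P^+(n+i)>(n+i)^\gamma\ge n^\gamma$.
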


Here $\gamma_3$ is the solution to $1-\rho(1/x)=1/3$, so the set $\{n\in \mathbb{N}: P^{+}(n)>n^{\gamma_3}\}$ has asymptotic density $1/3$. In \cite{hildebrand-higher}, the same was proved with $\gamma_3$ replaced by the smaller value $e^{-1/2}=0.6065\ldots$, where this value of $\gamma_{3}$ solves $1-\rho(1/x)=1/2$. 

We can also prove a result for longer strings of largest prime factors.

\begin{theorem}[Consecutive $k$-tuples with large prime factors]\label{theo_kprimes} Define
\begin{align*}
\gamma_4 & \coloneqq 0.5322\\
\gamma_5 & \coloneqq 0.4804.
\end{align*}
Then for $k=4,5$, we have 
\begin{align*}
d_{-}(\{n\in \mathbb{N}:\,\, P^{+}(n+1)> n^{\gamma_k}, P^{+}(n+2)> n^{\gamma_k},\ldots, P^{+}(n+k)> n^{\gamma_k}\})>0.    
\end{align*}
\end{theorem}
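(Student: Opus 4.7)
The plan is to apply Theorem \ref{theo2} with all $k$ sets chosen equal to $A \coloneqq Q_{\gamma_k, 1} = \{n \in \mathbb{N}: P^+(n) > n^{\gamma_k}\}$; one needs weak stability of $A$, uniform distribution in short intervals with some density $\delta$, and the density inequality $\delta > c_k$.

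First I would note that $A$ is stable (and hence weakly stable with $B_x = A$), which is already remarked in Subsection \ref{sub: stable} for the class $Q_{\alpha,\beta}$: the key point is that for $p \nmid n$ one has $P^+(pn) = \max(p, P^+(n))$, so $n \in A$ and $pn \in A$ can differ only on a subset of $\{n \leq x: p\nmid n\}$ of relative density $o(1)$, which follows from continuity of the Dickman function together with $\log(pn)/\log n = 1 + o(1)$ on average. Uniform distribution of $A$ in short intervals with density $\delta = 1 - \rho(1/\gamma_k)$ follows from the Matom\"aki--Radziwi\l{}\l{} theorem applied to smooth-number indicators, exactly as in the other $P^+$-based applications in Section \ref{sec: apps}.

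The essential remaining step is the purely numerical verification that $\delta > c_k$. For $k=4$, $1/\gamma_4 \approx 1.8789$ lies in $[1,2]$ where $\rho(u) = 1 - \log u$, giving $\delta = -\log \gamma_4 \approx 0.63069$, which just exceeds $c_4 = (3+\sqrt{2})/7 \approx 0.63061$. For $k=5$, $1/\gamma_5 \approx 2.0816 \in [2,3]$; using the functional equation $u\rho'(u) = -\rho(u-1)$ and numerical integration starting from $\rho(2) = 1 - \log 2$, one finds $\delta \approx 0.7319$, just exceeding $c_5 = (9+2\sqrt{6})/19 \approx 0.73152$. Applying Theorem \ref{theo2} to the $k$-tuple $(A,A,\ldots,A)$ then yields the claimed positive lower density.

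The main (mild) obstacle is that the density margin is extremely tight: $\gamma_4$ and $\gamma_5$ are calibrated precisely so that $1 - \rho(1/\gamma_k)$ just barely clears $c_k$. Any lowering of $\gamma_k$ (or extension of this argument to $k \geq 6$, where $c_k$ would need to beat the trivial bound $\rho(1/\gamma)<1$) would require a corresponding improvement in the density threshold coming from the sumset-inversion arguments in Section \ref{sec: higher}.
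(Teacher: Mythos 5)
Your proposal is correct and matches the paper's own (very terse) proof exactly: both apply Theorem \ref{theo2} with all $k$ sets equal to $Q_{\gamma_k,1}$, using stability and short-interval equidistribution of this set (already established en route to Theorem \ref{theo_largest}) together with the numerical check that $1-\rho(1/\gamma_k)>c_k$ for $k=4,5$. The paper leaves the numerics to the reader; your evaluation (for $k=4$: $-\log(0.5322)\approx 0.6307 > (3+\sqrt{2})/7\approx 0.63060$; for $k=5$: $1-\rho(1/0.4804)\approx 0.7316 > (9+2\sqrt{6})/19\approx 0.73152$) confirms the inequalities, if with margins even thinner than your quoted approximations suggest.
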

Again, Hildebrand \cite{hildebrand-higher} proved a similar result with $\gamma_k$ replaced by the smaller value $\frac{1}{\rho^{-1}(\frac{1}{k-1})}$, where $\rho^{-1}$ is the inverse function of the Dickman $\rho$ function. Like his result, ours can also be applied for higher values of $k$, but since our value of $\gamma_k$ behaves asymptotically like Hildebrand's value as $k\to \infty$, we omit the cases $k\geq 6$ from the theorem. 

\subsection{Sign patterns of the Liouville function}\label{sub: Liouville}

In Section \ref{sec: signpattern}, we will prove a result on length five sign patterns of the Liouville function. This application will not be based on Theorems \ref{theo1} or \ref{theo2} but nevertheless, like those theorems, it will be reduced to results about for correlations of multiplicative functions. In particular, we will use what we called an ``isotopy formula'' in \cite[Section 1]{tt-elliott} that implies in particular that
\begin{align*}
\E_{n\leq x}^{\log} \lambda(n+h_1)\cdots \lambda(n+h_k)=\E_{n\leq x}^{\log} \lambda(n-h_1)\cdots \lambda(n-h_k)+o(1)     
\end{align*}
for any $h_1,\ldots, h_k\in \mathbb{N}$. We will use this to show that there are at least $24$ sign patterns of length $5$ for the Liouville function.

\begin{theorem}[Length five sign patterns of Liouville]\label{theo_sign} There are at least $24$ sign patterns in $\{-1,+1\}^5$ that are attained by $\lambda$ with positive upper density, including the six explicit sign patterns
$$\pm (+1,+1,+1,+1,-1), \pm (+1,+1,+1,-1,-1), \pm (+1,-1,+1,+1,-1) $$
and their reversals
$$\pm (-1,+1,+1,+1,+1), \pm (-1,-1,+1,+1,+1), \pm (-1,+1,+1,-1,+1) $$
\end{theorem}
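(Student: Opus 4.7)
The plan is to first express the logarithmic density of a length-$5$ sign pattern of $\lambda$ in terms of just three $4$-point correlations, by combining the reversal isotopy formula with the known logarithmic Chowla theorems for orders $k\in\{1,2,3,5\}$ (the odd orders from \cite{tt-elliott} and Tao's theorem for $k=2$). Expanding
\begin{align*}
\mathbf{1}[\lambda(n+i)=s_i,\,1\leq i\leq 5]=\tfrac{1}{32}\prod_{i=1}^5(1+s_i\lambda(n+i))
\end{align*}
and taking logarithmic averages kills all $|T|\in\{1,2,3,5\}$ correlation terms, while the reversal isotopy identifies the five surviving $4$-point correlations into three representatives
\begin{align*}
\alpha&\coloneqq \E_{n\leq x}^{\log}\lambda(n)\lambda(n+1)\lambda(n+2)\lambda(n+3),\\
\beta&\coloneqq \E_{n\leq x}^{\log}\lambda(n)\lambda(n+1)\lambda(n+2)\lambda(n+4),\\
\gamma&\coloneqq \E_{n\leq x}^{\log}\lambda(n)\lambda(n+1)\lambda(n+3)\lambda(n+4).
\end{align*}
A short calculation then yields the closed form
\begin{align*}
\E_{n\leq x}^{\log}\mathbf{1}[\lambda(n+i)=s_i,\,1\leq i\leq 5]=\frac{1}{32}\bigl(1+\pi\bigl[(s_1+s_5)\alpha+(s_2+s_4)\beta+s_3\gamma\bigr]\bigr)+o(1)
\end{align*}
for the logarithmic density of the pattern $s$, where $\pi\coloneqq \prod_{i=1}^5 s_i$.

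Next I would pass to a subsequence along which $(\alpha,\beta,\gamma)$ converges to a limit $(\alpha_0,\beta_0,\gamma_0)$, and exclude the extremal configurations $\gamma_0=\pm 1$ by a structural argument. For example, $\gamma_0=-1$ would force $\lambda(n)\lambda(n+1)=-\lambda(n+3)\lambda(n+4)$ on a logarithmic density-$1$ set; iterating through shifts by $3$ and exploiting the complete-multiplicativity identity $\lambda(12k)=\lambda(12)\lambda(k)=-\lambda(k)$ yields $\lambda(k)=\lambda(k+1)$ on a positive density set, contradicting Tao's $2$-point logarithmic Chowla. A symmetric argument handles $\gamma_0=+1$, and the analogous degeneracies in $\alpha_0$ and $\beta_0$ are ruled out similarly. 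These exclusions suffice to show that the $12$ explicit patterns in the statement (the $8$ patterns of ``type A'' with density $(1+\gamma)/32$, and the $4$ patterns of ``type D'' with density $(1-2\beta-\gamma)/32$) all have positive upper logarithmic density, and hence positive upper natural density.

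To upgrade the count from $12$ to $24$, I would combine the preceding with \cite[Corollary 1.6; Proof of Corollary 7.2]{tt-elliott}, which guarantees that all $16$ length-$4$ sign patterns of $\lambda$ are attained with positive lower density, forcing each length-$4$ pattern to extend to at least one attained length-$5$ pattern. In the density formula the nine type densities group into four complementary pairs whose densities sum to $2(1+\gamma)/32$; the attainedness of type A then forces at least one type from each pair to be attained at some accumulation point. Combining these forced contributions with the explicit $12$, using closure of the attained set under the reversal $s\mapsto s^R$ (from isotopy) and under negation $s\mapsto -s$ (from the symmetry of the density formula), and carefully tracking the extensions of the $4$ length-$4$ patterns of shape $(t_1,t_2,t_3,t_2)$ with $t_3\neq t_2$ that are not covered by the left projections of types A or D, one arrives at the required total of at least $24$ distinct attained length-$5$ patterns.

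The hardest step is the structural ruling-out of the extremal correlation values $\gamma_0=\pm 1$ and the analogous degeneracies, which relies on iterating the resulting density-$1$ functional equations and then reducing to the $k=2$ Chowla identity via complete multiplicativity of $\lambda$. Once these degeneracies are excluded, the final counting is a routine combinatorial verification using the complementary-pair decomposition of types and the reversal/negation closure of the attained set.
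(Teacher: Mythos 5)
Your overall scaffolding matches the paper's: expand $1_{\lambda(n+i)=s_i}=\tfrac12(1+s_i\lambda(n+i))$, kill the correlations of orders $1,2,3,5$ (odd-order logarithmic Chowla and Tao's two-point theorem), and reduce the five surviving $4$-point correlations to three parameters $(\alpha,\beta,\gamma)$ by shift invariance and the isotopy formula, arriving at the same density formula $32\mathbb{P}_s=1+\pi[(s_1+s_5)\alpha+(s_2+s_4)\beta+s_3\gamma]$. However, the subsequent analysis has several gaps.

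First, your argument to exclude $\gamma_0=\pm 1$ is not convincing as written. You propose shifting the identity $\lambda(n)\lambda(n+1)=-\lambda(n+3)\lambda(n+4)$ by $3$ repeatedly and invoking $\lambda(12k)=-\lambda(k)$, but $\lambda(12m+1)$ is not related to $\lambda(m+1)$ in any obvious way, so the multiplicativity of $\lambda$ does not interact cleanly with the additive iteration. The paper instead shifts by $1$ twice: from $\lambda(n+1)\lambda(n+2)\lambda(n+4)\lambda(n+5)=\varepsilon_0$ almost always, two shift-and-multiply steps yield $\lambda(n+1)\lambda(n+7)=\varepsilon_0$ almost always, contradicting two-point Chowla directly. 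You should either adopt that argument or supply the missing details of yours.

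Second, and more seriously, your argument for the ``type D'' patterns is incomplete. Patterns such as $(+1,+1,+1,+1,-1)$ have density $(1-2\beta-\gamma)/32$, and positivity requires $2\beta+\gamma<1$ strictly. This does not follow from $|\gamma|<1$ and $|\beta|<1$ (nor even from $|\beta|\le 1/2$). The nonnegativity of densities only gives $|2\beta+\gamma|\le 1$. The paper handles these four ``exactly-one-plus/minus'' patterns with an entirely separate argument: since all $16$ length-$4$ sign patterns are attained with positive lower density (\cite[Corollary 7.2]{tt-elliott}), one applies the extension strategy of \cite[Corollary 2.8]{mrt-sign} to deduce that each length-$4$ pattern must admit at least one length-$5$ continuation of positive density. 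You do mention this ingredient at the end, but you cannot also assert that the degeneracy exclusion alone settles type D.

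Third, your counting to reach $24$ is too loose to verify. You assert that the nine density types group into ``four complementary pairs whose densities sum to $2(1+\gamma)/32$,'' but in fact two of the four pairs sum to $2(1+\gamma)/32$ and the other two sum to $2(1-\gamma)/32$; moreover the pairs have different multiplicities ($2$ vs.\ $4$ patterns per type), so the na\"ive count from ``at least one member of each pair is positive'' gives only $8+2+2+4+4=20$, not $24$. You are also missing the bound $|\alpha|,|\beta|\le 1/2$ (the paper's Lemma on the $\le 1/2$ correlation bound via the identity $|\lambda(n+1)\cdots\lambda(n+k)+\lambda(n+2)\cdots\lambda(n+k+1)|=|\lambda(n+1)+\lambda(n+k+1)|$), which is essential input in several branches. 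The paper's approach is a careful five-way case analysis on which of $a,b,c$ vanish, with an additional symmetry/group argument in the generic case $a,b,c\ne 0$; replacing it by the pairing heuristic loses exactly the cases where $24$ is tight.

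In short: the reduction to the density formula is correct and matches the paper, but the exclusion of extremal $\gamma$, the treatment of the type D explicit patterns, and the combinatorics of the $24$-count all need to be reworked to follow something closer to the paper's Lemma $|\alpha|,|\beta|\le 1/2$, Lemma $|\gamma|<1$, the case analysis on vanishing of $a,b,c$, and the length-$4$ extension argument for the one-plus/minus patterns.
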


If we denote by $s(k)$ the number of length $k$ sign patterns that occur infinitely often in the Liouville function, then Theorem \ref{theo_sign} implies that $s(5)\geq 24$. In \cite[Corollary 7.2]{tt-elliott}, the authors proved that $s(4)=16$. For large values of $k$, our knowledge on $s(k)$ is rather weak; \cite[Remark 1.12]{tt-elliott} gives the explicit bound $s(k)\geq 2k+8$, whereas Frantzikinakis and Host \cite[Theorem 1.2]{fh-sarnak} proved that $s(k)$ grows faster than linearly with $k$. Very recently, this was improved by McNamara \cite{redmond} to $s(k) \gg k^2$.  Trivially, if we had Chowla's conjecture, then $s(k)=2^k$ would follow.

In order to improve the bound of $24$ in Theorem \ref{theo_sign}, one would have to improve the known bounds on the correlations of the Liouville function. Namely, if we define 
\begin{align*}
C_{A}\coloneqq \lim_{m\to \infty}\mathbb{E}_{n\leq x_m}^{\log}\prod_{j\in A}\lambda(n+j)
\end{align*}
for any finite set $A\subset \mathbb{N}$, where the sequence $(x_m)$ tending to infinity is chosen so that all the limits exists (which is possible by a diagonal argument), then from \cite[Proposition 7.1]{tt-elliott} we have the bound $|C_{\{1,2,\ldots, k\}}|\leq 1/2$. If this bound was sharp for $k=4$, then we could have the hypothetical scenario
\begin{align*}
C_{\{1,2,3,4\}}=C_{\{2,3,4,5\}}=\frac{1}{2},\quad C_{\{1,2,3,5\}}=C_{\{1,2,4,5\}}=C_{\{1,2,3,5\}}=0,    
\end{align*}
in which case one would easily see (using the odd order logarithmic Chowla conjecture from \cite[Theorem 1.1(i)]{tt-elliott}) that there are no more than $24$ sign patterns of the Liouville function that occur with positive logarithmic lower density. Thus one would have to rule out this scenario to be able to improve on the number of length $5$ sign patterns. 

\subsection{Proof strategy}

We briefly describe the ideas that go into the proofs of Theorems \ref{theo1}, \ref{theo1.5} and \ref{theo2}. Consider for example Theorem \ref{theo1}. By an elementary argument one sees that $d_{-}((A_1-1)\cap (A_2-2)\cap(A_3-3))>0$ is equivalent to the triple correlation
\begin{align}\label{eq52}
\mathbb{E}_{x/\omega(x)\leq n\leq x}^{\log}1_{A_1}(n+1)1_{A_2}(n+2)1_{A_3}(n+3)
\end{align}
being $\gg 1$ as $x\to \infty$ for every $\omega(X)\leq X$ tending to infinity. The functions $1_{A_i}$ are not assumed to be  multiplicative, but the assumption of weak stability works as a useful substitute to this, since for some sets $B_{x,i}$ and all primes $p$ we can write $1_{A_i}(n)=1_{B_{x,i}}(pn)+o(1)$ for most $n\leq x$. Using this relation, averaging \eqref{eq52} over primes, and applying the entropy decrement argument from \cite{tao-corr}, \cite{tt-elliott}, we conclude that \eqref{eq52} equals to
\begin{align}\label{eq53}
\mathbb{E}_{p\leq P}^{\log}\mathbb{E}_{x/\omega(x)\leq n\leq x}^{\log}1_{B_{x,1}}(n+p)1_{B_{x,2}}(n+2p)1_{B_{x,3}}(n+3p)+o(1)
\end{align}
with $P=P(x)$ being a medium size parameter. Such a double average is evidently easier to analyze than a single average. The only information that we will use about the sets $B_{x,i}$ is that they are uniformly distributed in short intervals with densities $\delta_1,\delta_2,\delta_3>0$, respectively, as follows easily from the fact that the $A_i$ have this property.  

Appealing to the Furstenberg correspondence principle, the average \eqref{eq53} being $\gg 1$ will follow from the following ergodic-theoretic statement: For any measure-preserving system $(X,\mu,T)$ and any measurable sets $B_1,B_2,B_3\subset X$ satisfying the uniform distribution property
\begin{align}\label{eq56}
\lim_{H\to \infty}\int_{X}|\mathbb{E}_{h\leq H}1_{B_i}(T^{qh}x)-\delta_i|\, d\mu(x)=0
\end{align} 
for all $q\in \mathbb{N}$ and with $\delta_i$ as in Theorem \ref{theo1}, we have
\begin{align}\label{eq54}
\mathbb{E}_{p\leq P}^{\log}\int_{X}1_{B_1}(T^{p}x)1_{B_2}(T^{2p}x)1_{B_3}(T^{3p}x)\, d\mu(x)\gg 1.
\end{align}
By the generalised von Neumann theorem and the Gowers uniformity of the primes \cite{gt-linear}, the bound \eqref{eq54} will follow from 
\begin{align}\label{eq55}
\mathbb{E}_{d\leq P:\, (d,W)=1}^{\log}\int_{X}1_{B_1}(T^{d}x)1_{B_2}(T^{2d}x)1_{B_3}(T^{3d}x)\, d\mu(x)\gg 1,
\end{align} 
where we are now averaging over integers rather than primes and $W:=\prod_{p\leq w}p$ (with $w$ a slowly growing function of $P$). This is roughly the conclusion we reach after Section \ref{sec: correspondence}.

In Section \ref{sec: 3}, we make several ergodic-theoretic reductions to reduce to the case where $X=(\mathbb{R}/\mathbb{Z})^d\times (\mathbb{Z}/m\mathbb{Z})$ for some $d,m\in \mathbb{N}$, so that the problem has essentially been reduced to the same problem on a torus. Now we apply a Pollard-type inequality from \cite{tao-kemperman} (which can be viewed as a quantitative version of the inequality $\mu(A+B)\geq \mu(A)+\mu(B)$ valid for compact subsets $A,B\subset X$ of any compact, connected abelian group, with $\mu$ being the Haar measure on $X$) to conclude the proof (it is here that the assumption $\delta_1+\delta_2+\delta_3>1$ is crucial).

In the case of Theorem \ref{theo1.5}, we proceed similarly up to the point where $X=(\mathbb{R}/\mathbb{Z})^d\times (\mathbb{Z}/m\mathbb{Z})$. Since $\delta_1+\delta_2+\delta_3$ is exactly $1$, the Pollard-type inequality is no longer sufficient to conclude, but employing instead an inverse theorem for it from \cite{tao-kneser} (see Theorem \ref{invt}), we can deduce that \eqref{eq55} holds unless $B_1,B_2,B_3$ (or rather their projections to $(\mathbb{R}/\mathbb{Z})^d$) are essentially Bohr sets. The case where $B_1,B_2,B_3$ are Bohr sets can be dealt with a bit of Fourier analysis, and we eventually conclude that \eqref{eq55} holds then as well under the conditions of Theorem \ref{theo1.5}.

For Theorem \ref{theo2}, we make a similar reduction to the statement
\begin{align*}
\mathbb{E}_{p\leq P}^{\log}\int_{X} 1_{B_1}(T^p x)1_{B_2}(T^{2p}x)...1_{B_k}(T^{kp}x)\, d\mu(x)\gg 1
\end{align*}  
with the $B_i$ satisfying \eqref{eq56} as before. One easily sees from \eqref{eq56} that $\int_X 1_{B_i}(x)\, d\mu(x)=\delta_i$, $\int_X 1_{B_{i_1}}(T^{i_1 p} x)1_{B_{i_2}}(T^{i_2p}x)\, d\mu(x)=\delta_{i_1}\delta_{i_2}$ for $1\leq i_1<i_2\leq k$. Using the Pollard-type inequality mentioned above, we can also get a lower bound for 
\begin{align*}
\int_X 1_{B_{i_1}}(T^{i_1 p} x)1_{B_{i_2}}(T^{i_2p}x)1_{B_{i_3}}(T^{i_3 p}x)\, d\mu(x)
\end{align*} 
for $1\leq i_1<i_2<i_3\leq k$. The question is then, how large $\delta=\min_i \delta_i$ can be under these constraints if \eqref{eq55} fails. This is a combinatorial problem whose solution gives us the value of $c_k$ in Theorem \ref{theo2}.

\subsection{Acknowledgments}

The authors are grateful to the referee a for careful reading of the paper and for useful comments and corrections.

TT was supported by a Simons Investigator grant, the James and Carol Collins Chair, the Mathematical Analysis \& Application Research Fund Endowment, and by NSF grant DMS-1266164.

JT thanks UCLA for excellent working conditions during a visit
there in April 2018, during which a large proportion of this work was completed.

Part of this paper was written while the authors were in residence at MSRI in spring
2017, which is supported by NSF grant DMS-1440140.

\subsection{Notation}\label{sub: not}

We use the following standard arithmetic functions:
\begin{itemize}
\item $\omega(n)$, defined to equal the number of prime factors of $n$ (not counting multiplicity);
\item $\Omega(n)$, defined to equal the number of prime factors of $n$ (counting multiplicity);
\item The \emph{Liouville function} $\lambda(n) = (-1)^{\Omega(n)}$;
\item The \emph{M\"obius function} $\mu(n)$, defined to equal $\lambda(n)$ when $n$ is square-free and $0$ otherwise;
\item The largest prime factor $P^{+}(n)$ of $n$, and the smallest prime factor $P_-(n)$ of $n$ (with the convention $P^{-}(1)=P^{+}(1)=1$);
\item The \emph{Euler totient} function $\varphi(n)$, defined to equal the number $|(\Z/n\Z)^\times|$ of primitive residue classes modulo $n$; and
\item The \emph{von Mangoldt function} $\Lambda(n)$, defined to equal $\log p$ when $n$ is a power $p^j$ of a prime $p$ for some $j \geq 1$, and equal to zero otherwise.
\item The \emph{Dickman function} $\rho(u)$, defined as the unique continuous solution to the delayed differential equation $u\rho'(u)+\rho(u-1)=0$ with the initial condition $\rho(u)=1$ for $0\leq u\leq 1$. As is well-known, we have $\lim_{x\to \infty}\frac{1}{x}|\{n\leq x:\,\, P^{+}(n)\leq x^{u}\}|=\rho(1/u)$; we refer to \cite{ht} for further properties of this function.
\end{itemize}

If $A$ is a finite set, we use $|A|$ to denote its cardinality.  If $A$ is a set of natural numbers, we define the \emph{lower density}
\begin{equation}\label{dminus}
 d_-(A) \coloneqq \liminf_{x \to \infty} \frac{|A \cap [1,x]|}{x},
\end{equation}
the \emph{upper density}
$$ d_+(A) \coloneqq \limsup_{x \to \infty} \frac{|A \cap [1,x]|}{x},$$
and the \emph{asymptotic density}
$$ d(A) \coloneqq \lim_{x \to \infty} \frac{|A \cap [1,x]|}{x}$$
(if it exists).

If $A$ is a finite non-empty set of natural numbers and $f \colon A \to \C$ is a function, we define the average
$$ \E_{n \in A} f(n) \coloneqq \frac{\sum_{n \in A} f(n)}{\sum_{n \in A} 1}$$
and the logarithmic average
$$ \E^{\log}_{n \in A} f(n) \coloneqq \frac{\sum_{n \in A} \frac{f(n)}{n}}{\sum_{n \in A} \frac{1}{n}}.$$
If we average over the variable $p$ instead of $n$, the definitions are same, except that the summation variable is now restricted to be prime.

We utilise the Dickman function $\rho(u)$ that equals to the asymptotic density $d(\{n\in \mathbb{N}:\, P^{+}(n)\leq n^{1/u}\})$; see \cite{ht} for further properties of this function. 

If $A$ is a set, we use $1_A$ to denote the indicator function, thus $1_A(n)=1$ when $n \in A$ and $1_A(n)=0$ otherwise. Similarly, if $E$ is a statement, we let $1_E$ denote the indicator of $E$, thus $1_E=1$ when $E$ is true and $1_E=0$ when $E$ is false.

We use $X \ll Y$, $X \gg Y$, $X = O(Y)$ to denote a bound of the form $|X| \leq CY$ for an absolute constant $C$; if we need to allow $C$ to depend on additional parameters, we denote this by subscripts, thus for instance $X = O_k(Y)$ denotes the bound $|X| \leq C_k Y$ for some $C_k$ depending on $k$.  Given an asymptotic parameter such as $x$ tending to infinity, we use $o(Y)$ to denote a quantity bounded in magnitude by $c(x) Y$ where $c(x)$ goes to zero as $x \to \infty$.

We use $e(x) \coloneqq e^{2\pi i x}$ for the standard character.  We use $n\ \Mod{q}$ for the reduction of $n$ modulo $q$, and $(a_1,\dots,a_k)$ for the greatest common divisor of $a_1,\dots,a_k$.

\section{A correspondence principle}\label{sec: correspondence}

In this section we develop a correspondence principle for weakly stable sets, analogous to the Furstenberg correspondence principle \cite{furst}, which converts problems about establishing patterns in such sets with positive lower density to problems about establishing certain patterns in measure-preserving systems.  The approximately multiplicative structure of weakly stable sets will be incorporated (via the ``entropy decrement argument'' \cite{tao-chowla}) to a certain prime shift in these latter patterns.  This correspondence principle will then be used  in later sections to establish Theorems \ref{theo1}, \ref{theo1.5}, \ref{theo2}.  We remark that the analogous correspondence principle with weakly stable sets replaced by bounded multiplicative functions is essentially contained in the recent work of Frantzikinakis and Host \cite{fhost}.

We first recall the definition of a measure-preserving system.   

\begin{definition}[Measure-preserving systems]
We say that a tuple $(X,\mathcal{X},\mu,T)$ is a \emph{measure-preserving system} if $\mathcal{X}$ is a sigma algebra on $X$, $\mu$ is a measure on $\mathcal{X}$, and $T:X\to X$ is measure-preserving in the sense that $T$ is invertible with $T,T^{-1}$ both measurable with $\mu(T^{-1}A)=\mu(A)$ for all $A\in \mathcal{X}$. We often omit the sigma algebra $\mathcal{X}$ from the notation when it plays no specific role. We further say that $(X,\mathcal{X},\mu,T)$ is a \emph{separable measure-preserving system} if the sigma algebra $\mathcal{X}$ is countably generated.  
\end{definition}

The main result of this section is then as follows.

\begin{theorem}[Correspondence principle for weakly stable sets]\label{weak-stable}
Let $A_1,\dots,A_k \subset \mathbb{N}$ be weakly stable sets.  Suppose that there is a finite index set $I$ and, for each $\alpha \in I$, one has a natural number $m^\alpha$, integers $h_1^\alpha,\dots,h_{m^\alpha}^\alpha$ and indices $c_1^\alpha,\dots,c_{m^\alpha}^\alpha \in \{1,\dots,k\}$ such that
\begin{equation}\label{alop}
 d_-\left( \bigcup_{\alpha \in I} \bigcap_{i=1}^{m^\alpha} (A_{c_i^\alpha} - h_i^\alpha) \right) = 0.
\end{equation}
Then there exists a separable measure preserving system $(X,\mathcal{X},\mu,T)$ and measurable sets $B_1,\dots,B_k \in \mathcal{X}$ such that
\begin{equation}\label{limp}
 \lim_{P \to \infty} \sum_{\alpha \in I} \E^{\log}_{p \leq P} \int_X \prod_{i=1}^{m^\alpha} 1_{B_{c_i^\alpha}}( T^{ph_i^\alpha} x )\ d\mu(x) = 0.
\end{equation}
Furthermore, one can ensure the following additional properties:
\begin{itemize}
\item[(i)]  If for each $j=1,\dots,k$, $A_j$ is uniformly distributed in short intervals with density $\delta_j \in [0,1]$, then for every natural number $q$ and $i=1,\dots,k$ one has
\begin{equation}\label{erg}
 \lim_{H \to \infty} \int_X |\E_{h \leq H} 1_{B_j}( T^{qh} x) - \delta_j|\ d\mu(x) = 0.
\end{equation}
In particular, (by the triangle inequality and shift invariance) each $B_j$ has measure $\delta_j$.
\item[(ii)]  If the $A_j$ are disjoint up to sets of density zero, then the $B_j$ are disjoint up to null sets.
\item[(iii)]  If $d( \bigcup_{j=1}^k A_j ) = 1$, then $\bigcup_{j=1}^k B_j$ has full measure.
\end{itemize}
\end{theorem}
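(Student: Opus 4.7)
The plan is to combine a Furstenberg-type correspondence with the entropy decrement argument of \cite{tao-corr, tt-elliott}, with the weak stability hypothesis taking the role that bounded multiplicativity played in those works. The final system $(X,\mathcal{X},\mu,T)$ will be a weak-$*$ limit of empirical measures on the shift space $X=\{0,1\}^{\mathbb{Z}\times\{1,\dots,k\}}$.

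First, the hypothesis \eqref{alop} furnishes a sequence $x_m\to\infty$ along which $\E_{n\le x_m}1_S(n)\to 0$, where $S:=\bigcup_{\alpha}\bigcap_i(A_{c_i^\alpha}-h_i^\alpha)$. Since each intersection $\bigcap_i(A_{c_i^\alpha}-h_i^\alpha)$ is contained in $S$ and $I$ is finite,
\begin{equation*}
\sum_\alpha \E_{n\le x_m}\prod_i 1_{A_{c_i^\alpha}}(n+h_i^\alpha)\longrightarrow 0.
\end{equation*}
Second, let $(B_{x,j})$ be sequences corresponding to the $A_j$ via Definition \ref{def_weakstable}. Using the weak stability relation $1_{A_j}(n)=1_{B_{x_m,j}}(pn)+o(1)$ on integers coprime to $p$, together with the change of variables $n\mapsto pn$ in the average and an entropy decrement argument to decouple the prime $p$ from the shifted variable, as in \cite[Section 4]{tt-elliott}, one obtains, for a suitably slowly growing $P_m\to\infty$, the identity
\begin{equation*}
\E^{\log}_{p\le P_m}\E_{n\le x_m}\prod_i 1_{B_{x_m,c_i^\alpha}}(n+p h_i^\alpha) = \E_{n\le x_m}\prod_i 1_{A_{c_i^\alpha}}(n+h_i^\alpha) + o(1)
\end{equation*}
for each $\alpha\in I$.

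For the correspondence, equip $X$ with the product topology (so that it is compact metrizable and hence separable), with the shift $T$ acting on the $\mathbb{Z}$ coordinate, and set $B_j:=\{\omega\in X:\omega(0,j)=1\}$. For each $m$ define $\omega^{(m)}(n,j):=1_{B_{x_m,j}}(n)$ and the empirical measure $\mu_m:=\E_{n\le x_m}\delta_{T^n\omega^{(m)}}$; extract a weak-$*$ subsequential limit $\mu$, which is $T$-invariant by the standard telescoping estimate. Since the cylinder function $\omega\mapsto\prod_i\omega(ph_i^\alpha,c_i^\alpha)$ is continuous on $X$, for each fixed $p$ and $\alpha$
\begin{equation*}
\int_X \prod_i 1_{B_{c_i^\alpha}}(T^{p h_i^\alpha}x)\,d\mu(x) = \lim_m \E_{n\le x_m}\prod_i 1_{B_{x_m,c_i^\alpha}}(n+p h_i^\alpha).
\end{equation*}
Combining with the second step and choosing $P_m$ to grow slowly enough that the $o(1)$ dominates the prime harmonic sum $\log\log P_m$ (which uses the fact that the correlations above are nonnegative, so a monotone comparison of logarithmic averages over $p\le P$ and $p\le P_m$ is available for every fixed $P$), one obtains \eqref{limp}.

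Properties (i)--(iii) follow by transferring the corresponding properties of $A_j$ first to $B_{x_m,j}$ via weak stability (short-interval uniform distribution, asymptotic disjointness, and asymptotic full coverage are all preserved up to $o(1)$ when passing from $A_j$ to $B_{x_m,j}$), and then to the cylinder sets $B_j$ through the weak-$*$ limit; a standard $L^1$-approximation converts the short-interval uniform distribution for $B_{x_m,j}$ into the ergodic-style statement \eqref{erg}. The main technical obstacle is the entropy decrement in the second step: although the argument of \cite{tao-corr, tt-elliott} is modular, care is needed to ensure that the coprimality restriction $p\nmid n$ in Definition \ref{def_weakstable} does not spoil the cancellation after the change of variables, and to choose the growth rate of $P_m$ uniformly across the finitely many $\alpha\in I$.
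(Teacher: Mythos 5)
Your overall architecture is the right one---Furstenberg-style correspondence plus the entropy decrement argument, with weak stability substituting for multiplicativity---but there is a gap at the heart of your second step that also affects the construction of $\mu$. You run the entropy decrement argument and the change of variables $n\mapsto pn$ against the \emph{Cesàro} averages $\E_{n\le x_m}$, whereas both of these steps genuinely require a \emph{logarithmic} average in $n$ over a range like $[x_l/\omega_l,x_l]$. The reason is dilation invariance: after applying weak stability to write $1_{A_j}(n)\approx 1_{B_{x_l,j}}(pn)$ and substituting $m=pn$, one needs the resulting average over $m$ (which lives in a dilated range and is restricted to multiples of $p$, with a weight $p\,1_{p\mid m}$) to be comparable to the original average. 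Logarithmic averaging over $[x_l/\omega_l,x_l]$ is essentially invariant under $n\mapsto pn$ up to boundary terms of size $O(\log p/\log\omega_l)$, so this works; Cesàro averaging over $[1,x_m]$ is not, and the ``identity'' you state simply does not hold in that form. Likewise the entropy decrement input \cite[Theorem 3.6]{tt-elliott}, which removes the weight $p\,1_{p\mid n}-1$, is formulated for logarithmic averages in $n$. The paper handles all this by first introducing a slowly growing $\omega_l$ and observing that $\omega_l\,\E_{n\le x_l}1_S(n)=o(1)$ implies $\E^{\log}_{x_l/\omega_l\le n\le x_l}1_S(n)=o(1)$; this passage to logarithmic averages over a controlled dyadic-like window is not cosmetic but is the precondition for everything downstream. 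Your weak-$*$ limit should accordingly be taken of logarithmically weighted empirical measures over $[x_l/\omega_l,x_l]$ rather than Cesàro ones, or you can use a generalized limit functional $\plim$ together with the Kolmogorov extension theorem as the paper does.

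Two smaller points. First, your construction of $\mu$ by a weak-$*$ subsequential limit is a legitimate alternative to the paper's $\plim$ plus Kolmogorov extension, and you correctly note that cylinder functions are continuous so the basic transfer goes through; your ``monotone comparison'' argument using nonnegativity to pass from $\E^{\log}_{p\le P_m}$ to $\lim_{P\to\infty}\E^{\log}_{p\le P}$ is also fine once the preceding step is repaired. Second, your treatment of (i) is essentially a placeholder: transferring the short-interval uniform distribution of $A_j$ through weak stability to $B_{x,j}$ is not a trivial $L^1$-approximation but requires, in the paper's proof, a sum over primes $p\in[\log Q,Q]$, the Tur\'an--Kubilius inequality, and Cauchy--Schwarz to upgrade the weak-stability relation at a single prime into the unweighted statement \eqref{erg}. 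It would be worth at least flagging that this is where the real work in (i) lies; (ii) and (iii) are indeed routine once $\mu$ is in place.
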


\begin{remark}
For the application to Theorems \ref{theo1} and \ref{theo2}, we are going to to take $I$ to be a singleton and $h_i^{\alpha}=c_i^{\alpha}=i$. For Theorem \ref{theo1.5} in turn, we choose $I=\{1,2,3\}$ and $h_i^{\alpha}=i$, and as $\alpha$ ranges through $I$ the tuples $(c_i^{\alpha})_{i\leq 3}$ run through solutions to $c_1^{\alpha}+c_2^{\alpha}+c_3^{\alpha}=c \Mod{3}$. 
\end{remark}

We remark that by the ergodic theorem, the conclusion \eqref{erg} is equivalent to $1_{B_i} - \delta_i$ being orthogonal to the \emph{profinite factor} of $X$, defined as the factor generated by all the periodic functions on $X$ (that is, functions $f:X\to \mathbb{C}$ with $f(T^kx)=f(x)$ for some natural number $k$ and almost all $x\in X$).  The presence of the dilation factor $p$ in the shifts $T^{ph_i^\alpha}$ in \eqref{limp} is a key feature of this principle that is not present in the classical Furstenberg correspondence principle, and is introduced via the entropy decrement argument from \cite{tao-chowla}.  We remark that the existence of the limit in \eqref{limp} can also be derived from the general convergence results for multiple ergodic averages along the primes in \cite{fhk}, \cite{wz}, and the logarithmically averaged limit $\E^{\log}_{p \leq P}$ can then be replaced by the ordinary average $\E_{p \leq P}$.  In fact we have a useful formula for the limit; see Proposition \ref{limf} below.

We now prove the theorem.  Let 
$$ S \coloneqq \bigcup_{\alpha \in I} \bigcap_{i=1}^{m^\alpha} (A_{c_i^\alpha} - h_i^\alpha) $$
denote the set in \eqref{alop}.  By hypothesis, we have $d_-(S)=0$, thus we can find a sequence $x_l$ tending to infinity such that
$$ \E_{n \leq x_l} 1_S(n) = o(1)$$
as $l \to \infty$.
In particular, if $1 \leq \omega_l \leq x_l$ goes to infinity sufficiently slowly, one has
$$ \omega_l \E_{n \leq x_l} 1_S(n) = o(1)$$
which implies in particular that
$$ \E_{x_l/\omega_l \leq n \leq x_l}^{\log} 1_S(n) = o(1).$$
Since 
$$ 1_S(n) = \sum_{\alpha \in I} \prod_{i=1}^{m^\alpha} 1_{A_{c_i^\alpha}}(n+h_i^\alpha)$$
we thus have
\begin{equation}\label{xll}
 \E_{x_l/\omega_l \leq n \leq x_l}^{\log} \prod_{i=1}^{m^\alpha} 1_{A_{c_i^\alpha}}(n+h_i^\alpha) = o(1)
\end{equation}
for each $\alpha \in I$.

For each $j=1,\dots,k$, the set $A_j$ is weakly stable by hypothesis.  Let $B_{x,j}$ be the sets corresponding to $A_j$ as per Definition \ref{def_weakstable}.  Then for each prime $p$, one has
$$ \E_{n \leq x_l} 1_{p \nmid n} |1_{A_j}(n) - 1_{B_{x_l,j}}(pn)| = o(1)$$
as $l \to \infty$, which for $\omega_l$ sufficiently slowly growing depending on $p$ implies that
\begin{equation}\label{exi}
 \E_{x_l/\omega_l \leq n \leq x_l}^{\log} 1_{p \nmid n} |1_{A_j}(n) - 1_{B_{x_l,j}}(pn)| = o(1).
\end{equation}
By a diagonalisation argument, one can select $\omega_l$ so that \eqref{exi} holds for \emph{all} primes $p$ (of course, the decay rate will almost certainly not be uniform in $p$).

Restoring the case $p|n$, we have
$$ \E_{x_l/\omega_l \leq n \leq x_l}^{\log} |1_{A_j}(n) - 1_{B_{x_l,j}}(pn)| \ll \frac{1}{p} + o(1) ,$$
and hence also
$$ \E_{x_l/\omega_l \leq n \leq x_l}^{\log} |1_{A_{c_i^\alpha}}(n + h_i^\alpha) - 1_{B_{x_l,c_i^\alpha}}(pn + p h_i^\alpha)| \ll \frac{1}{p} + o(1) $$
for all $\alpha \in I$ and $i=1,\dots,m^\alpha$.  From this, \eqref{xll}, and the triangle inequality we conclude that
$$ \E_{x_l/\omega_l \leq n \leq x_l}^{\log} \prod_{i=1}^{m^\alpha} 1_{B_{x_l,c_i^\alpha}}(pn+ph_i^\alpha) \ll \frac{1}{p} + o(1)$$
for all primes $p$, all $\alpha \in I$, and $i=1,\dots,m^\alpha$, where we allow implied constants in the asymptotic notation to depend on $I$ and the $m^\alpha$.  Writing this average in terms of $pn$ instead of $n$ (which only impacts the logarithmic average in $n$ by a negligible amount, other than by now restricting $n$ to multiples of $p$), we obtain
$$ \E_{x_l/\omega_l \leq n \leq x_l}^{\log} \prod_{i=1}^{m^\alpha} 1_{B_{x_l,c_i^\alpha}}(n+ph_i^\alpha) p 1_{p|n} \ll \frac{1}{p} + o(1).$$

If we logarithmically average over primes $p \leq P$, we conclude from the convergence of $\sum_p \frac{1}{p^2}$ and the divergence of $\sum_p \frac{1}{p}$ that
$$\lim_{P \to \infty} \limsup_{l \to \infty} \E_{p \leq P}^{\log} \E_{x_l/\omega_l \leq n \leq x_l}^{\log} \prod_{i=1}^{m^\alpha} 1_{B_{x_l,c_i^\alpha}}(n+ph_i^\alpha) p 1_{p|n} = 0.$$
On the other hand, by the entropy decrement argument \cite[Theorem 3.6]{tt-elliott} we have
$$\lim_{P \to \infty} \limsup_{l \to \infty} \E_{p \leq P}^{\log} \E_{x_l/\omega_l \leq n \leq x_l}^{\log} \prod_{i=1}^{m^\alpha} 1_{B_{x_l,c_i^\alpha}}(n+ph_i^\alpha) (p 1_{p|n} - 1) = 0.$$
We conclude from the triangle inequality that
$$\lim_{P \to \infty} \limsup_{l \to \infty} \E_{p \leq P}^{\log} \E_{x_l/\omega_l \leq n \leq x_l}^{\log} \prod_{i=1}^{m^\alpha} 1_{B_{x_l,c_i^\alpha}}(n+ph_i^\alpha) = 0$$
for all $\alpha \in I$.

Next, let $\plim: \ell^\infty(\N) \to \C$ denote a generalised limit functional, that is to say
a bounded linear functional on $\ell^\infty(\N)$ that extends the limit functional on convergent sequences, and such that
$$ \liminf_{l \to\infty} a_l \leq \plim (a_l)_{l \in \N} \leq \limsup_{n \to\infty} a_l$$
for all bounded real-valued sequences $a_n$.  The existence of such a generalised limit functional easily follows from the Hahn-Banach theorem (or from the existence of non-principal ultrafilters on $\N$).  Then we have
\begin{equation}\label{limpi}
\lim_{P \to \infty} \plim \left( \E_{p \leq P}^{\log} \E_{x_l/\omega_l \leq n \leq x_l}^{\log} \prod_{i=1}^{m^\alpha} 1_{B_{x_l,c_i^\alpha}}(n+ph_i^\alpha)\right)_{l \in \N} = 0.
\end{equation}

Let $X$ denote the product space $(\{0,1\}^k)^\Z$ of sequences $(x_{c,m})_{c \in \{1,\dots,k\}, m \in \Z}$ of numbers $x_{c,m} \in \{0,1\}$ with the product sigma algebra ${\mathcal X}$ (so in particular, $X$ is a compact Hausdorff space with separable sigma algebra ${\mathcal X}$) and the shift 
$$T(x_{c,m})_{c \in \{1,\dots,k\}, m \in \Z} \coloneqq (x_{c,m+1})_{c \in \{1,\dots,k\}, m \in \Z}.$$  
We define a probability measure $\mu$ on $X$ by requiring that
\begin{align}\label{xphi}
\begin{aligned}
 \int_X \prod_{\beta \in J} 1_{x_{c_\beta,m_\beta} = 1}\ d\mu(x) = \plim \left( \mathbb{E}_{x_l/\omega_l \leq n \leq x_l}^{\log} \prod_{\beta \in J} 1_{B_{x_l,c_\beta}}(n + m_\beta) \right)_{l \in \N}
\end{aligned}
\end{align}
for any finite index set $J$, any $c_\beta \in \{1,\dots,k\}$, and any integers $m_\beta$.  The existence (and uniqueness) of this measure follows from the Kolmogorov extension theorem.  The measure $\mu$ is a probability measure that is invariant under the shift $T$, since the right-hand side of  \eqref{xphi} remains invariant when the $m_\beta$ are replaced by $m_\beta+1$. Next, we define the measurable sets $B_j$ for $j=1,\dots,k$ by the formula
$$ B_j \coloneqq \{ (x_{c,m})_{c \in \{1,\dots,k\}, m \in \Z} \in X: x_{j,0} = 1 \},$$
then one can rewrite the left-hand side of \eqref{xphi} as
\begin{equation}\label{xphi-2}
 \int_X \prod_{\beta \in J} 1_{B_{c_{\beta}}}(T^{m_\beta} x)\ d\mu(x).
\end{equation}
In particular, from \eqref{limpi} one has
$$
\lim_{P \to \infty}\mathbb{E}_{p\leq P}^{\log} \int_X \prod_{i=1}^{m^\alpha} 1_{B_{c_i^\alpha}}(T^{ph_i^\alpha} x)\ d\mu = 0$$
for all $\alpha \in I$, which gives \eqref{limp}.

Now we prove (ii).  If $A_j, A_{j'}$ are disjoint up to zero density sets, then 
$$
 \E_{x_l/\omega_l \leq n \leq x_l}^{\log} 1_{A_j}(n) 1_{A_{j'}(n)} = o(1).$$
Repeating the previous arguments using this bound in place of \eqref{xll}, we eventually arrive at
$$ \int_X 1_{B_j}(x) 1_{B_{j'}}(x)\ d\mu(x) = 0,$$
and hence $B_j, B_{j'}$ are disjoint up to null sets.  This gives (ii).  Similarly, if $\bigcup_{j=1}^k A_j$ has density one, then
$$
 \E_{x_l/\omega_l \leq n \leq x_l}^{\log} \prod_{j=1}^k (1 - 1_{A_j}(n)) = o(1),$$
and then by repeating the previous arguments
$$ \int_X \prod_{j=1}^k (1-1_{B_j}(x))\ d\mu(x) = 0,$$
so that $\bigcup_{j=1}^k B_j$ has full measure.  This establishes (iii).

Now we turn to (i).  Fix $b,q,j$, let $\eps>0$, let $Q$ be sufficiently large (depending on $b,q,\eps$), and then let $H$ be sufficiently large (depending on $b,q,\eps,Q$).  Further, let $p$ be a prime in $[\log Q, Q]$. Since $A_j$ is uniformly distributed in short intervals with density $\delta_j$, we then conclude (if $\omega_l$ grows slowly enough) that 
\begin{equation}\label{abd}
\sup_{p\in [\log Q,Q]} \mathbb{E}_{x_l/\omega_l\leq y\leq x_l}^{\log}||A_j\cap [y/p,y/p+qH/p]\cap (q\mathbb{Z}+b\overline{p})|-\delta_j H/p| =o(1),
\end{equation}
where $\overline{p}$ denotes the inverse of $p$ in $\mathbb{Z}/q\mathbb{Z}$ (this exists since $p \geq \log Q > q$ for $Q$ large enough).  Also, since $A_j$ is weakly stable, we have
$$ \E_{n \leq x/p: p \nmid n} |1_{A_j}(n) - 1_{B_{x,j}}(pn)| = o(1),$$
and hence 
\begin{equation}\label{bbd}
 \sup_{p\in [\log Q,Q]} \mathbb{E}_{x_l/\omega_l\leq y\leq x_l}^{\log} \E_{y/p \leq n \leq y/p + qH/p: p \nmid n} |1_{A_j}(n) - 1_{B_{x,j}}(pn)| = o(1).
\end{equation}
From \eqref{bbd} we have in particular that for each $p \in [\log Q,Q]$
$$  \mathbb{E}_{x_l/\omega_l\leq y\leq x_l}^{\log}\E_{n \in [y/p,y/p+qH/p] \cap (q\mathbb{Z}+b\overline{p}): p \nmid n} |1_{A_j}(n) - 1_{B_{x_l,j}}(pn)| = o(1),$$
and hence on removing the $p \nmid n$ constraint
$$  \mathbb{E}_{x_l/\omega_l\leq y\leq x_l}^{\log}\E_{n \in [y/p,y/p+qH/p] \cap (q\mathbb{Z}+b\overline{p})} |1_{A_j}(n) - 1_{B_{x_l,j}}(pn)| \ll \frac{1}{p} + o(1).$$
Meanwhile, from \eqref{abd} one has
$$  \mathbb{E}_{x_l/\omega_l\leq y\leq x_l}^{\log} |\E_{n \in [y/p,y/p+qH/p] \cap (q\mathbb{Z}+b\overline{p})} (1_{A_j}(n)-\delta_j)|=o(1).$$
By the triangle inequality, we conclude that
$$  \mathbb{E}_{x_l/\omega_l\leq y\leq x_l}^{\log} |\E_{n \in [y/p,y/p+qH/p] \cap (q\mathbb{Z}+b\overline{p})} (1_{B_{x_l,j}}(pn) - \delta_j)| \ll 1/p+o(1),$$
or equivalently
$$  \mathbb{E}_{x_l/\omega_l\leq y\leq x_l}^{\log}|\E_{n \in [y,y+qH] \cap (q\mathbb{Z}+b)} (1_{B_{x_l,j}}(n) - \delta_j) 1_{p|n}| \ll \frac{1}{p}\left(\frac{1}{p} + o(1)\right).$$
We can estimate $\frac{1}{p}+o(1)$ by $O(\eps)$ for $Q$ sufficiently large.  We then sum in $p$ and use the triangle inequality to conclude that
$$  \mathbb{E}_{x_l/\omega_l\leq y\leq x_l}^{\log}\big|\E_{n \in [y,y+qH] \cap (q\mathbb{Z}+b)} (1_{B_{x_l,j}}(n) - \delta_j) \sum_{\log Q \leq p \leq Q} 1_{p|n}\big| \ll \eps \log \log Q$$
for $Q$ sufficiently large. On the other hand, from the Turan-Kubilius inequality (or a direct second moment calculation) we have
$$ \E_{n \in [y,y+qH] \cap (q\mathbb{Z}+b)} \left|\sum_{\log Q \leq p \leq Q} 1_{p|n} - \log\log Q\right|^2 \ll \eps^2 (\log\log Q)^2,$$
and hence by Cauchy--Schwarz
$$ \E_{n \in [y,y+qH] \cap (q\mathbb{Z}+b)} |1_{B_{x_l,j}}(n) - \delta_j| \left|\sum_{\log Q \leq p \leq Q} 1_{p|n} - \log\log Q\right| \ll \eps \log \log Q.$$
From the triangle inequality, we thus have
$$  \mathbb{E}_{x_l/\omega_l\leq y\leq x_l}^{\log}|\E_{n \in [y,y+qH] \cap (q\mathbb{Z}+b)} (1_{B_{x_l,j}}(n) -\delta_j) |\ll \eps.$$
This implies that
$$ \limsup_{l \to \infty} \E^{\log}_{x_l/\omega_l \leq n \leq x_l: n = b\ (q)} | \E_{h \leq H} (1_{B_{x_l,j}}(n + qh) -\delta_j)| \ll \eps;$$
averaging in $b$, this implies
$$ \limsup_{l \to \infty} \E^{\log}_{x_l/\omega_l \leq n \leq x_l} | \E_{h \leq H} (1_{B_{x_l,j}}(n + qh) -\delta_j)| \ll \eps,$$
and thus
$$ \lim_{H \to \infty} \limsup_{l \to \infty} \E^{\log}_{x_l/\omega_l \leq n \leq x_l} | \E_{h \leq H} (1_{B_{x_l,j}}(n + qh) -\delta_j)|^2 = 0.$$
Using \eqref{xphi}, \eqref{xphi-2} and expanding the square, we conclude that
$$ \lim_{H \to \infty} \int_X | \E_{h \leq H} (1_{B_{j}}(T^{qh} x) -\delta_j)|^2\ d\mu(x) = 0,$$
and \eqref{erg} follows from the Cauchy--Schwarz inequality.  This completes the proof of Theorem \ref{weak-stable}.

In view of this correspondence principle (taken in the contrapositive), Theorems \ref{theo1}, \ref{theo1.5}, \ref{theo2} are immediate consequences of the following ergodic-theoretic counterparts (specialised to the case when $F_j = 1_{B_j}$ are indicator functions).

\begin{theorem}[Main theorem, ergodic version]\label{theo-erg} Let $F_1,\dots,F_k: X \to [0,1]$ be measurable functions on a measure-preserving system $(X,\mathcal{X},\mu,T)$, and let $\delta_1,\dots,\delta_k \in (0,1]$ be such that
\begin{align}\label{eq51} 
\lim_{H \to \infty} \int_X |\E_{h \leq H} F_j( T^{qh} x) - \delta_j|\ d\mu(x) = 0 
\end{align}
for all $q \geq 1$ and $j=1,\dots,k$.
\begin{itemize}
\item[(i)]  ($k=3$, large density) If $k=3$ and $\delta_1+\delta_2+\delta_3 > 1$, then
$$
\limsup_{P \to \infty} \E^{\log}_{p \leq P} \int_X F_1( T^p x) F_2( T^{2p} x) F_3( T^{3p} x)\ d\mu(x) > 0.
$$
\item[(ii)]  ($k=3$, critical density, first part)  If $k=3$ and $\delta_1+\delta_2+\delta_3 = 1$, then
$$
\limsup_{P \to \infty} \sum_{\substack{c_1,c_2,c_3\in \{0,1,2\}\\c_1+c_2+c_3\equiv c \Mod{3}}} \E^{\log}_{p \leq P} \int_X F_{c_1}( T^p x) F_{c_2}( T^{2p} x) F_{c_3}( T^{3p} x)\ d\mu(x) > 0
$$
for all $c=0,1,2$.
\item[(iii)]  ($k=3$, critical density, second part)  If $k=3$, $\delta_1+\delta_2+\delta_3 = 1$, $\delta_1 \neq \delta_3$, and $F_1+F_2+F_3=1$ almost everywhere, then
$$
\limsup_{P \to \infty} \E^{\log}_{p \leq P} \int_X F_1( T^p x) F_2( T^{2p} x) F_3( T^{3p} x)\ d\mu(x) > 0.
$$
\item[(iv)] ($k>3$)  If $k>3$ and $\delta_1,\dots,\delta_k > c_k$ (where $c_k$ is as in Theorem \ref{theo2}), then
$$
\limsup_{P \to \infty} \E^{\log}_{p \leq P} \int_X F_1( T^p x) \dots F_k( T^{kp} x)\ d\mu(x) > 0.
$$
\end{itemize}
\end{theorem}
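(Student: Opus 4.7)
The plan is to execute the four-step strategy outlined at the end of the introduction. First, I would apply the generalised von Neumann inequality together with the Gowers uniformity of the $W$-tricked von Mangoldt function from \cite{gt-linear} to replace the logarithmically averaged prime average with an integer average over $d$ coprime to $W \coloneqq \prod_{p\leq w}p$ (with $w$ growing slowly with $P$), reducing each conclusion to showing
$$\liminf_{P\to\infty}\E^{\log}_{\substack{d\leq P\\(d,W)=1}} \int_X F_1(T^d x)F_2(T^{2d}x)\cdots F_k(T^{kd}x)\,d\mu(x)>0$$
(or the corresponding sum in part (ii)).

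Second, I would invoke the theory of characteristic factors for multiple ergodic averages: the Gowers uniformity of the primes rules out higher-step nilpotent contributions, and the hypothesis \eqref{eq51} asserts that each $F_j-\delta_j$ is orthogonal to the profinite factor of $X$ (since its conditional expectation onto the $T^q$-invariant $\sigma$-algebra equals $\delta_j$ for every $q$). Combining these observations, one may reduce to the case where $X = K$ is a compact abelian group of the form $(\R/\Z)^{d'} \times \Z/m\Z$ with $T$ an ergodic rotation by some $\alpha\in K$. Weyl equidistribution of the integers coprime to $W$ in the orbit closure of $\alpha$ then replaces the $d$-average by the Haar integral over $K$, turning each part of the theorem into the problem of showing strict positivity of
$$\int_K\int_K F_1(x+y)F_2(x+2y)\cdots F_k(x+ky)\,dx\,dy$$
for functions $F_j\colon K \to [0,1]$ with $\int_K F_j = \delta_j$.

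Third, I would conclude using the Pollard--Kemperman-type inequalities on compact abelian groups developed in \cite{tao-kemperman}. For part (i), a change of variables diagonalising the 3-term pattern reduces positivity to applying the Pollard inequality to $F_1$ and $F_3$ and integrating against $F_2$, and the hypothesis $\delta_1+\delta_2+\delta_3>1$ forces the required overlap. For part (ii), the residue constraint $c_1+c_2+c_3 \equiv c \Mod 3$ admits a Fourier expansion over the characters of $\Z/3\Z$ that reorganises the sum into non-negative diagonal contributions. For part (iv), iterating pairwise Pollard bounds over all $\binom{k}{2}$ and $\binom{k}{3}$ subpatterns and performing a combinatorial optimisation on the relative masses yields exactly the quadratic threshold defining $c_k$. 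The main obstacle is part (iii), the critical equality case $\delta_1+\delta_2+\delta_3=1$: here the Pollard inequality is tight, so I would invoke the inverse Kneser-type theorem from \cite{tao-kneser} to force the supports of the $F_j$ to be approximate Bohr sets in $K$, and then use the partition hypothesis $F_1+F_2+F_3=1$ together with the asymmetry $\delta_1 \neq \delta_3$ to rule out the remaining degenerate configurations via a direct Fourier computation on the cyclic or toral Bohr structure.
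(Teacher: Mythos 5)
Your outline is broadly aligned with the paper's strategy for parts (i) and (iii): the Gowers uniformity of the primes (via the generalised von Neumann inequality) converts the prime average to a $W$-tricked integer average, ergodic decomposition and Kronecker-factor characteristicity reduce to a rotation on a compact abelian Lie group $G \times \Z/M\Z$, and then the Kemperman--Pollard inequality of \cite{tao-kemperman} handles the supercritical case (i), while the Kneser-type inverse theorem of \cite{tao-kneser} handles the critical equality in (iii). Two of your steps, however, contain genuine gaps.

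First, part (iv). You propose to "reduce to the case where $X=K$" and then establish positivity of $\int_K\int_K F_1(x+y)\cdots F_k(x+ky)\,dx\,dy$. But for $k>3$ this $k$-term average has complexity $k-2>1$, so the Kronecker factor is \emph{not} characteristic for it, and the reduction to a compact abelian group simply does not apply to the full $k$-fold correlation. The paper's actual route is different: one only passes to the Kronecker factor for the \emph{triple} subcorrelations (which are complexity one), uses Lemma~\ref{le_pollard} to upper bound the third moment of the counting function $N(d,x,t)=\sum_i 1_{E_i}(T^{id}x,t)$ on the lifted space $\tilde X = X\times[0,1]^k$, computes the first and second moments directly from \eqref{eq51}, and then derives a contradiction from the polynomial inequality $(N-1)(N-a)(N-a+1)\geq 0$ for $N\in\{1,\dots,k\}$ \emph{without} ever controlling the $k$-fold average by a characteristic factor. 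Your mention of optimizing over $\binom{k}{2}$- and $\binom{k}{3}$-subpatterns is pointing in roughly the right direction, but the surrounding framework (reduction of the full average to a torus) would not go through, and the argument needs to be restructured around the moment method applied to $N$ on the lifted space.

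Second, part (ii). You suggest a Fourier expansion of the constraint $c_1+c_2+c_3\equiv c\ (3)$ over the characters of $\Z/3\Z$ "reorganising the sum into non-negative diagonal contributions." This does not work: the indices $c_1,c_2,c_3$ label the three functions $F_1,F_2,F_3$, and Fourier-expanding the congruence in the index set does not produce a positivity structure in the $\mu$-integral—there is no group structure on the labels that interacts with the shifts $T^p,T^{2p},T^{3p}$ in the required way. The paper instead argues by contradiction: if all nine summands are small, the inverse Kneser theorem (Theorem~\ref{invt}) forces each triple $(F_{c_1}(\cdot,a+r),F_{c_2}(\cdot,a+2r),F_{c_3}(\cdot,a+3r))$ to be close to Bohr sets cut out by a character $\phi_{a,r;c_1,c_2,c_3}$, and a rigidity lemma on characters (Lemma~\ref{chak}) together with the freedom in choosing $(c_1,c_2,c_3)$ eventually pins down a single $\phi_a$ satisfying $4\phi_a=\pm\phi_a$, contradicting that $\hat G$ is torsion-free. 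Without this inverse-theorem-plus-rigidity step, part (ii) is not proved by your proposal.
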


\begin{remark}\label{rmk2}
The condition $\delta_1\neq \delta_3$ in part (iii) is necessary. To see this, let $X=(\mathbb{R}/\mathbb{Z})\times (\mathbb{Z}/2\mathbb{Z})$, equipped with its Haar measure and the measure-preserving map $T(x,n)=(x+\alpha,n+1)$ for $\alpha$ irrational. In addition, for $0<\delta_2<1$ define the intervals
\begin{align*}
I_1=[\frac{\delta_2}{2},\frac{1}{2}],\quad I_2=[0,\frac{\delta_2}{2})\cup [\frac{1}{2},\frac{1+\delta_2}{2}],\quad I_3=[\frac{1+\delta_2}{2},1)
\end{align*}
and the functions $F_i(x,n)=1_{I_i}(x+(n\%2)/4)$, where $n\%2$ equals $0$ when $n$ is even and $1$ when $n$ is odd. We then have $F_1+F_2+F_3\equiv 1$. By Weyl's equidistribution theorem, condition \eqref{eq51} is satisfied for $j=1,2,3$ with densities $(1-\delta_2)/2,\delta_2,(1-\delta_2)/2$, respectively. However, for any $p$, we have $F_1(T^p x)F_2(T^{2p}x)F_3(T^{3p}x)=0$, since for any $x,y\in \mathbb{R}/\mathbb{Z}$ we cannot simultaneously have $x+y\in I_1$, $x+2y\in I_2\pm 1/4$, $x+3y\in I_3$.\\
Analogously, if we define the sets of \emph{integers}
\begin{align*}
A_i=\{n\equiv 0\Mod{2}:\,\alpha n\in I_i \mod 1\}\cup \{n\equiv 1\Mod{2}:\,\alpha n-1/4\in I_i \mod 1\}
\end{align*}
for $i=1,2,3$, then $A_1,A_2,A_3$ are uniformly distributed in short intervals with densities $(1-\delta_2)/2,\delta_2,(1-\delta_2)/2$, respectively, but $n+d\in A_1,n+2d\in A_2,n+3d\in A_3$ for $d$ odd never happens. 
\end{remark}

To prove this theorem, we will use the following explicit formula for the limit of multiple ergodic averages along primes, which is essentially implicit in \cite{fhk}.

\begin{proposition}[Limit formula]\label{limf}  Let $F_1,\dots,F_k \in L^\infty(X)$ be bounded measurable functions on a measure-preserving system $(X,\mathcal{X},\mu,T)$.  Then
$$
\lim_{P \to \infty} \E^{\log}_{p \leq P} \int_X F_1( T^p x) \dots F_k( T^{kp} x)\ d\mu(x) 
= \lim_{w \to \infty} \lim_{P \to \infty} \E^{\log}_{d \leq P: (d,W)=1} \int_X F_1( T^{d} x) \dots F_k( T^{kd} x)\ d\mu(x)
$$
where $W \coloneqq \prod_{p \leq w} p$.
\end{proposition}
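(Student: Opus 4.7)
The plan is to follow the approach of Frantzikinakis--Host--Kra \cite{fhk}, which establishes essentially the same identity for Cesàro averages along primes; the passage to logarithmic averaging will be largely cosmetic.

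First I would apply the Host--Kra structure theorem to project each $F_j$ onto a characteristic nilfactor of step $s = s(k)$, reducing the problem to a statement about $L^2$-approximations of the multiple correlation sequence $\Phi(n) \coloneqq \int_X F_1(T^n x) \cdots F_k(T^{kn} x)\, d\mu(x)$ by polynomial $s$-step nilsequences. The error in this approximation is absorbed by the generalised von Neumann theorem combined with the Green--Tao--Ziegler Gowers uniformity of the $W$-tricked von Mangoldt function from \cite{gt-linear}: any $L^\infty$-bounded sequence with small $U^s$-norm contributes negligibly to both the prime average and the coprime-to-$W$ integer average, with the negligibility improving as $w \to \infty$.

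Having reduced to polynomial nilsequences, I would invoke Green--Tao's equidistribution theorem for primes on nilmanifolds. For any polynomial $s$-step nilsequence $\psi$ of bounded complexity, one obtains
\begin{align*}
\lim_{P \to \infty} \E^{\log}_{p \leq P} \psi(p) = \lim_{P \to \infty} \E^{\log}_{d \leq P: (d,W)=1} \psi(d) + o_{w \to \infty}(1),
\end{align*}
since both sides equal, modulo a $w$-dependent error, the integral of $\psi$ against Haar measure on the underlying nilmanifold. The transition between Cesàro and logarithmic averages is handled by partial summation, using $\sum_{p \leq P} 1/p \sim \log\log P$ on the prime side and $\sum_{d \leq P: (d,W)=1} 1/d \sim \tfrac{\phi(W)}{W} \log P$ on the integer side. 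Combining with the previous step and taking $w \to \infty$ yields the claim.

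The principal technical obstacle is tracking the uniformity across the $W$-trick residue classes: both the Gowers-uniformity estimates of \cite{gt-linear} and the nilmanifold equidistribution estimates must be uniform in the residue $a \Mod W$ as $w \to \infty$, so that the decomposition of the prime sum (resp.\ the coprime-to-$W$ sum) into residue classes mod $W$ matches up on both sides after summation. These uniformity matters are already addressed in \cite{fhk}, so the present proposition follows essentially verbatim from their argument, with $\E^{\log}$ substituted for $\E$ throughout and dyadic summation-by-parts inserted where needed to move between the two averaging schemes.
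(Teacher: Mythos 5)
Your proposal is correct in broad outline, but it takes a genuinely different and more roundabout route than the paper. You first invoke the Host--Kra structure theorem to project each $F_j$ onto a characteristic nilfactor, reduce the correlation sequence to a nilsequence, and then apply the Green--Tao nilmanifold equidistribution of primes. The paper bypasses all of that: after partial summation (log to Ces\`aro), dyadic decomposition, and splitting into residue classes $b \pmod W$, it rewrites the prime side using $\frac{\phi(W)}{W}\Lambda$, so the whole question reduces to showing the $(\Lambda_{b,W}(d)-1)$--weighted integer average of $A(d)$ is $o(1)$. It then introduces an auxiliary average over translates $T^n x$, $n\leq P$, to put the expression in a form where the \emph{finitary} generalised von Neumann theorem of \cite[Lemma 5.2]{tt-chowla} applies directly, and closes with the $U^k$-uniformity $\|\Lambda_{b,W}-1\|_{U^k[2P/W]} = o(1)$ from \cite{gt-linear,gt-mobius,green_tao_ziegler}. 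No characteristic factor, ergodic decomposition, or nilsequence decomposition is ever needed.

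What the paper's route buys you: the structure-theoretic step in your argument is in a mild sense circular. Controlling the contribution of the error term $F_j - \E(F_j\mid Z^s)$ to the \emph{prime} average is itself a transference problem that ultimately rests on the same generalised von Neumann theorem plus Gowers uniformity of $\Lambda_{b,W}-1$ that the paper uses, so the structure theorem buys nothing here (it would be needed to prove that the inner limit on the right-hand side \emph{exists}, but the paper just cites Host--Kra for that). Your route also requires an ergodic decomposition (and tracking uniformity across components) to apply the structure theorem, which the paper's measure-level argument avoids. One small inaccuracy to flag: you write that ``any $L^\infty$-bounded sequence with small $U^s$-norm contributes negligibly to both the prime average and the coprime-to-$W$ integer average'' — but the error from the nilfactor projection controls a Gowers--Host--Kra \emph{seminorm} of $F_j$ on $X$, not the additive-combinatorial Gowers norm of the scalar sequence $\Phi(n)$; translating between the two is exactly where the von Neumann argument comes in, and one should be careful not to conflate them.
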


We remark that the convergence of the inner limit on the right-hand side was first established by Host and Kra \cite{hk}; see also \cite{ziegler} for an alternate proof.

\begin{proof}  To abbreviate notation we write $A(d) \coloneqq \int_X F_1(T^d x) \dots F_k(T^{kd} x)\ d\mu(x)$.  It suffices to show that
$$
\lim_{w \to \infty} \limsup_{P \to \infty} |\E^{\log}_{p \leq P} A(p) - \E^{\log}_{d \leq P: (d,W)=1} A(d)| = 0.$$
By summation by parts it will suffice to show that
$$
\lim_{w \to \infty} \limsup_{P \to \infty} |\E_{p \leq P} A(p) - \E_{d \leq P: (d,W)=1} A(d)| = 0,$$
and by dyadic decomposition it then suffices to show that
$$
\lim_{w \to \infty} \limsup_{P \to \infty} |\E_{P \leq p \leq 2P} A(p) - \E_{P \leq d \leq 2P: (d,W)=1} A(d)| = 0.$$
Equivalently, we need to show that
$$ \E_{P \leq p \leq 2P} A(p)= \E_{P \leq d \leq 2P: (d,W)=1} A(d) + o(1)$$
as $P \to \infty$, if $w = w(P)$ goes to infinity sufficiently slowly as $P \to \infty$.  By splitting into residue classes modulo $W$, it suffices to show that
$$ \E_{P \leq p \leq 2P: p = b\ \Mod{W}} A(p) = \E_{P \leq d \leq 2P: d = b\ \Mod{W}} A(d) + o(1)$$
uniformly for all $1 \leq b < W$ coprime to $W$.

Using the von Mangoldt function $\Lambda$ and the prime number theorem in arithmetic progressions, we can write the left-hand side as
$$ \E_{P \leq d \leq 2P: d = b\ \Mod{W}} \frac{\phi(W)}{W} \Lambda(d) A(d),$$
so it suffices to show that
$$ \E_{P \leq d \leq 2P: d = b\ \Mod{W}} (\frac{\phi(W)}{W} \Lambda(d)-1) A(d) = o(1),$$
or equivalently that
$$ \E_{P/W \leq d \leq 2P/W} (\Lambda_{b,W}(d)-1) \int_X F_1( T^{Wd+b} x) \dots F_k( T^{Wkd+kb} x)\ d\mu(x) = o(1),$$
where $\Lambda_{b,W}(d) \coloneqq \frac{\phi(W)}{W} \Lambda(Wd+b)$.  Replacing $x$ by $T^n x$ for $n \leq P$ and averaging, it suffices to show that
$$ \E_{P/W \leq d \leq 2P/W} \E_{n \leq P} \int_X (\Lambda_{b,W}(d)-1) F_1( T^{n+Wd+b} x) \dots F_k( T^{n+Wkd+kb} x)\, d\mu(x) = o(1)$$
uniformly in $b$.  By the generalised von Neumann theorem in the form of \cite[Lemma 5.2]{tt-chowla}, this will follow from the claim
$$ \| \Lambda_{b,W}(d)-1\|_{U^k[2P/W]} = o(1)$$
where the Gowers norm $U^k$ is defined for instance in \cite{gt-linear}.  But this follows from \cite[Theorem 7.2]{gt-linear} (combined with the main results of \cite{gt-mobius}, \cite{green_tao_ziegler}).
\end{proof}

It will thus suffice to prove the following slightly stronger version of Theorem \ref{theo-erg}.

\begin{theorem}[Main theorem, ergodic version, II]\label{theo-erg2} Let $F_1,\dots,F_k: X \to [0,1]$ be measurable functions on a measure-preserving system $(X,\mathcal{X},\mu,T)$, and let $\delta_1,\dots,\delta_k \in (0,1]$ be such that
\begin{equation}\label{erg2} \lim_{H \to \infty} \int_X |\E_{h \leq H} F_j( T^{qh} x) - \delta_j|\ d\mu(x) = 0
\end{equation}
for all $q \geq 1$ and $j=1,\dots,k$.  We allow implied constants to depend on $k,\delta_1,\dots,\delta_k$.  Let $W$ be a natural number.
\begin{itemize}
\item[(i)]  ($k=3$, large density) If $k=3$ and $\delta_1+\delta_2+\delta_3 > 1$, then
$$
\limsup_{P \to \infty} \E_{d \leq P: (d,W)=1} \int_X F_1( T^d x) F_2( T^{2d} x) F_3( T^{3d} x)\ d\mu(x) \gg 1.
$$
\item[(ii)]  ($k=3$, critical density, first part)  If $k=3$ and $\delta_1+\delta_2+\delta_3 = 1$, then
\begin{equation}\label{limps}
\limsup_{P \to \infty} \sum_{\substack{c_1,c_2,c_3\in \{0,1,2\}\\c_1+c_2+c_3\equiv c \Mod{3}}} \E_{d \leq P: (d,W)=1} \int_X F_{c_1}( T^d x) F_{c_2}( T^{2d} x) F_{c_3}( T^{3d} x)\ d\mu(x) \gg 1
\end{equation}
for all $c=0,1,2$.
\item[(iii)]  ($k=3$, critical density, second part)  If $k=3$, $\delta_1+\delta_2+\delta_3 = 1$, $\delta_1 \neq \delta_3$, and $F_1+F_2+F_3=1$ almost everywhere, then
$$
\limsup_{P \to \infty} \E_{d \leq P: (d,W)=1} \int_X F_1( T^d x) F_2( T^{2d} x) F_3( T^{3d} x)\ d\mu(x) \gg 1.
$$
\item[(iv)] ($k>3$)  If $k>3$ and $\delta_1,\dots,\delta_k > c_k$ (where $c_k$ is as in Theorem \ref{theo2}), then
\begin{equation}\label{dw}
\limsup_{P \to \infty} \E_{d \leq P: (d,W)=1} \int_X F_1( T^d x) \dots F_k( T^{kd} x)\ d\mu(x) \gg 1.
\end{equation}

\end{itemize}
\end{theorem}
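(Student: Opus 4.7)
My plan is to reduce all four parts of Theorem~\ref{theo-erg2} to statements on a compact abelian group $K$, which by Pontryagin duality we can further approximate by Lie-type groups of the form $(\R/\Z)^{d}\times \Z/m\Z$, with Haar measure $\mu_K$ and $T$ acting as translation by some $a\in K$ having dense orbit. After an ergodic decomposition we may assume $(X,\mu,T)$ is ergodic; the characteristic-factor theory of Host--Kra for triple averages $\E_d \int F_1(T^d x) F_2(T^{2d}x) F_3(T^{3d}x)\,d\mu$ lets us replace each $F_j$ (for $k=3$) by its projection to the Kronecker factor of $X$, and the restriction $(d,W)=1$ costs nothing in the limit because the coprime-weighted Weyl sums differ from the unrestricted ones only in Fourier modes corresponding to roots of unity, which are annihilated by the hypothesis \eqref{erg2} (equivalently: $F_j-\delta_j$ is orthogonal to the profinite factor of $X$). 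Standard Pontryagin approximation then reduces to the concrete Lie case $K=(\R/\Z)^{d}\times \Z/m\Z$.

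For Part~(i), after the above reductions the limiting average equals the Haar integral
\[
I \;=\; \int_{K}\int_{K} 1_{B_1}(x+y)\,1_{B_2}(x+2y)\,1_{B_3}(x+3y)\,d\mu_K(x)\,d\mu_K(y),
\]
and slicing over the middle coordinate $v=x+2y$ gives $I=\int_{B_2} \mu_K\bigl((v-B_1)\cap(B_3-v)\bigr)\,d\mu_K(v)$. The set of $v$ for which the inner mass is positive equals (via the doubling map $v\mapsto 2v$, which pushes Haar to Haar on $K$) the preimage under doubling of the sumset $B_1+B_3$; by the Pollard--Kneser-type inequality of \cite{tao-kemperman} this set has Haar measure at least $\min(1,\delta_1+\delta_3)$, and the hypothesis $\delta_1+\delta_2+\delta_3>1$ forces it to meet $B_2$ in positive measure. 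A quantitative, level-set form of the Pollard--Kneser inequality (also in \cite{tao-kemperman}) then upgrades this from ``nonempty overlap'' to the desired lower bound $I\gg 1$.

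For Parts~(ii) and~(iii) the density sum is exactly $1$, so the Pollard bound is saturated and the above argument only gives a non-strict estimate. Here I would invoke the inverse Kneser-type theorem of \cite{tao-kneser} (quoted as Theorem~\ref{invt}): if the triple correlation vanishes, then the sets $B_1,B_2,B_3$ must be (up to null sets) Bohr sets on $K$, i.e.\ preimages of intervals on $\R/\Z$ under a single character $\chi:K\to\R/\Z$. Once in the Bohr-set regime, the problem reduces to a question about three arcs $I_1,I_2,I_3\subset \R/\Z$ of lengths $\delta_1,\delta_2,\delta_3$ with $\delta_1+\delta_2+\delta_3=1$, and a direct Fourier/geometric analysis of the $3$-AP integrals $\int 1_{I_{c_1}}(t+s)1_{I_{c_2}}(t+2s)1_{I_{c_3}}(t+3s)\,dt\,ds$ splits cleanly across the three residue classes $c\in\{0,1,2\}$, yielding~(ii) unconditionally. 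Under the additional covering hypothesis $F_1+F_2+F_3=1$ and the asymmetry $\delta_1\neq \delta_3$, the same Fourier analysis shows that the individual triple correlation cannot vanish, giving~(iii); the counter-example in Remark~\ref{rmk2} demonstrates that without $\delta_1\neq\delta_3$ a reflection-symmetric Bohr configuration does saturate Pollard and makes the triple correlation vanish, so the asymmetry hypothesis is genuinely needed.

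For Part~(iv), I would apply Part~(i) in its $k=3$ incarnation (with threshold $c_3=1/3$, per the remark after Theorem~\ref{theo2}) to every triple $\{i_1<i_2<i_3\}\subset\{1,\dots,k\}$, producing a uniform positive lower bound for each triple correlation $\E_d^{\log}\int F_{i_1}(T^{i_1 d}x) F_{i_2}(T^{i_2 d}x) F_{i_3}(T^{i_3 d}x)\,d\mu$ (valid because $3c_k>1$ for every $k\geq 4$). A combinatorial inclusion-exclusion, constrained by the $\binom{k}{2}$ pair identities $\E_d\int F_i(T^{id}x) F_j(T^{jd}x)\,d\mu \to \delta_i \delta_j$ (which come from \eqref{erg2} applied to $q=j-i$) and the $\binom{k}{3}$ triple lower bounds, then forces a positive lower bound on the full $k$-point correlation, provided each $\delta_i$ exceeds $c_k$; optimizing this combinatorial problem (which counts how often the ``bad'' event $x+id\notin B_i$ can occur subject to the pair and triple constraints) yields exactly the quadratic equation defining $c_k$ in the statement. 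The most delicate step I anticipate is the inverse-Pollard analysis of~(ii) and~(iii), where both the classification of saturating configurations on $(\R/\Z)^d\times \Z/m\Z$ and the subsequent Fourier computation in the Bohr-set case require careful execution.
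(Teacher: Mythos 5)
Your reductions (ergodic decomposition, Kronecker factor, inverse limit of compact abelian Lie groups, and handling of the $(d,W)=1$ restriction via orthogonality to the profinite factor) match the paper, and your treatment of Part (i) via the Pollard--Kneser inequality on $G\times\Z/M\Z$ is correct in substance (note the paper handles $[0,1]$-valued $F_i$ rather than indicators by lifting to the enlarged torus $\tilde G = G\times(\R/\Z)^3$, a step you elide by writing $1_{B_i}$).

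For Parts (ii) and (iii) the plan is in the right direction but substantially underestimates what the Bohr-set regime requires. After Theorem \ref{invt} produces a nontrivial character $\phi_{a,r;c_1,c_2,c_3}\in\hat G$ for almost every pair $(a,r)$ and every admissible tuple $(c_1,c_2,c_3)$, the work lies in rigidifying this family: Lemma \ref{chak} shows the characters are determined up to sign, then comparing across the residue classes $c_1+c_2+c_3\equiv c$ shows $\phi$ is independent of the $c_i$, and finally shifting $(a,r)$ yields the relation $4\phi_a=\pm\phi_a$, which is impossible since $\hat G$ is torsion-free. For (iii) there is a further geometric argument showing that arcs of lengths $\delta_1,\delta_3$ cannot both tile the preimage of $\R/\Z\setminus I_{a;2}$ under doubling unless $\delta_1=\delta_3=\frac{1-\delta_2}{2}$, which is excluded by hypothesis. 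Your phrase ``a direct Fourier/geometric analysis ... splits cleanly across the three residue classes'' does not capture the purpose of the sum over $c_1+c_2+c_3\equiv c\Mod 3$: it is not a Fourier decomposition but a device for eliminating the dependence of the character on $(c_1,c_2,c_3)$.

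Part (iv) contains a genuine gap. You correctly identify that one should extract from \eqref{erg2} the first and second moments of the ``bad-event'' count, and that the Pollard inequality applied to triples controls the third moment. Translated to the complement counting function $N(d,x,t)=\sum_{i}1_{E_i}(T^{id}x,t)$ (after the lifting trick to $\tilde X=X\times[0,1]^k$, which you again omit but which is necessary because the $F_i$ need not be indicators), one obtains $\E\binom{N}{1}=k(1-\delta)$, $\E\binom{N}{2}=\binom{k}{2}(1-\delta)^2$, and $\E\binom{N}{3}\le\frac34\binom{k}{3}(1-\delta)^2$. But ``combinatorial inclusion-exclusion'' does not convert these moment constraints into a lower bound on $\P(N=0)$: the signs of the inclusion-exclusion terms do not cooperate, and you have no information about moments of order higher than three. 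The paper's actual mechanism is the pointwise inequality $(N-1)(N-a)(N-a+1)\ge 0$, valid for all integers $N\ge 1$ and $2\le a\le k$, which after expansion into binomial moments via Stirling-type identities gives the displayed quadratic in $(1-\delta)$ that the moment bounds violate when $\delta>c_k$ and $\P(N=0)$ is small. Without this (or an equivalent) cubic moment inequality, your argument does not close, even though you correctly anticipate that ``optimizing a combinatorial problem'' should yield the defining quadratic of $c_k$.
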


A key point here is that the lower bound is independent of $W$ (and of the system $X$).

\section[The main theorems for k=3]{The main theorems for $k=3$} \label{sec: 3}

In this section we prove parts (i)-(iii) of Theorem \ref{theo-erg2}.  We begin with some standard reductions.

\subsection[Reduction to the case of X being ergodic]{Reduction to the case of $X$ being ergodic}

We claim that to prove any part of Theorem \ref{theo-erg2}, it suffices to do so in the case when the measure-preserving system $X$ is ergodic (that is to say, all $T$-invariant subsets of $X$ have measure zero or full measure).  For sake of discussion we only present this in the case (ii), as the other cases are similar.  Let $X$ be a separable measure-preserving system that is not necessarily ergodic. Applying the ergodic decomposition (see e.g. \cite[Theorem 3.42]{glasner}) one can obtain a disintegration
\begin{align}\label{eqq3}
\mu = \int_Y \mu_y\ d\nu(y),
\end{align} 
where $(Y,\nu)$ is the $T$-invariant factor of $(X,\mu)$, and for $\nu$-almost every $y$, the $(X,T,\mu_y)$ are ergodic measure-preserving systems.  Assume that Theorem \ref{theo-erg2}(ii) is established whenever $X$ is ergodic.  By dominated convergence, \eqref{eqq3} and \eqref{erg2}, we have
\begin{align*}
&\int_{Y}\lim_{H\to \infty}\int_{X}|\E_{h\in [H]}F_c(T^{qh}x)-\delta_c|d\mu_y(x)d\nu(y)\\
&=\lim_{H\to \infty} \int_{Y}\int_{X}|\E_{h\in [H]}F_c(T^{qh}x)-\delta_c|d\mu_y(x)d\nu(y)=0.   
\end{align*}
Thus, for any $c = 1,2,3$, $q \geq 1$ and $\nu$-almost every $y$, we have that $\E_{h \in [H]} F_c(T^{qh} x)$ converges in $L^1(X,\mu_y)$ norm to $\delta_c$ as $H \to \infty$.  Applying Theorem \ref{theo-erg2}(ii) in the ergodic case, we conclude that for every $W$ and $c \in \Z/3\Z$, one has
$$ \liminf_{P \to \infty} \sum_{\substack{c_1,c_2,c_3 \in \{1,2,3\}\\ c_1+c_2+c_3 = c\ \Mod{3}}} \E_{P \leq r \leq 2P: (r,W)=1} \int_X F_{c_1}(T^r x) F_{c_2}(T^{2r} x) F_{c_3}(T^{3r} x)\ d\mu_y(x) \gg 1 $$
for $\nu$-almost every $y$.  Integrating in $y$ and applying\footnote{Though it is not strictly necessary, one could use the results of \cite{furst} (see also \cite{hk}) to upgrade the limit inferior here to a limit.} Fatou's lemma, this implies that
$$
 \liminf_{P \to \infty} \sum_{\substack{c_1,c_2,c_3 \in \{1,2,3\}\\ c_1+c_2+c_3 = c\ \Mod{3}}} \E_{P \leq r \leq 2P: (r,W)=1} \int_X F_{c_1}(T^r x) F_{c_2}(T^{2r} x) F_{c_3}(T^{3r} x)\ d\mu(x) \gg 1,
$$
giving Theorem \ref{theo-erg2}(ii) in the general case.  A similar argument works for all other components of Theorem \ref{theo-erg2}.

\subsection[Reduction to the case of X being a Kronecker system]{Reduction to the case of $X$ being a Kronecker system}

Next we make a reduction of parts (i)-(iii) of Theorem \ref{theo-erg2} to the case when $X$ is a \emph{Kronecker system}, by which we mean that $X$ is a compact separable abelian group with shift $T$ given by a translation $T: x \mapsto x+\alpha$; the argument here relies crucially on the fact that $k=3$, and does not extend to part (iv).  Again, we only detail this reduction for the case (ii).  If $(X,T,\mu)$ is an ergodic separable measure-preserving system, then (as is well known, see e.g. \cite{furst-weiss}) we can form the \emph{Kronecker factor} $(Z^1, S, \nu)$, which is a Kronecker system together with a factor map $\pi: X \mapsto Z^1$ that pushes forward $\mu$ to $\nu$ and intertwines $T$ and $S$ (with the measurable functions on $Z^1$ pulling back to the functions on $X$ generated by the eigenfunctions of $T$).    Furthermore, any average of the form
$$ \lim_{P \to \infty} \E_{r \in [P]} \int_X G_1(T^{ar} x) G_2(T^{br} x) G_3(T^{cr} x)\ d\mu$$
for distinct integers $a,b,c$ will vanish whenever at least one of the functions $G_1,G_2,G_3 \in L^\infty(X)$ is orthogonal to the Kronecker factor in the sense that the conditional expectation $\E(G_i|Z^1)$ vanishes for some $i$.  As such, we see (as in \cite{furst-weiss}) that the Kronecker factor is \emph{characteristic} for the average in \eqref{limp}, in the sense that one can replace each of the functions $F_c$ by the conditional expectation $\E(F_c|Z^1)$ without affecting the average.  The Kronecker factor is also characteristic for the ergodic averages in \eqref{erg2}.  Finally, as the functions $F_1,F_2,F_3$ take values in $[0,1]$ and sum to $1$, the same is true for $\E(F_1|Z^1), \E(F_2|Z^1), \E(F_3|Z^1)$. As such, we see that to prove Theorem \ref{theo-erg2}(ii) for the functions $F_1,F_2,F_3$ it suffices to do so for $\E(F_1|Z^1)$, $\E(F_2|Z^1)$, $\E(F_3|Z^1)$.  Thus Theorem \ref{theo-erg2}(ii) for general ergodic systems will follow from the case of Kronecker systems. Similarly for parts (i) or (iii) of this theorem.

\subsection[Reduction to the case of X being a Kronecker system corresponding to a Lie group]{Reduction to the case of $X$ being a Kronecker system corresponding to a Lie group}

We make a further reduction of parts (i)-(iii) of Theorem \ref{theo-erg2} to the case when the Kronecker system is a compact abelian \emph{Lie} group.  Again, we only discuss the case (ii).  It is easy to see that a general Kronecker system $X$ is expressible as the inverse limit of Kronecker systems $X_n$ that are compact abelian Lie groups (see also \cite{hk} for the generalisation of this claim to higher step).  Suppose that Theorem \ref{theo-erg2}(ii) has been proven for Kronecker systems that are compact abelian Lie groups.  If $F_1,F_2,F_3, X$ are as in that theorem, then $\E(F_c|X_n)$ will converge in $L^1(X,\mu)$ norm to $F_c$ for $c \in \Z/3\Z$.  Applying conditional expectations to \eqref{erg2} and using the dominated convergence theorem, we see that this hypothesis continues to hold if each function $F_c$ is replaced with $\E(F_c|Z_n)$.  Thus, by hypothesis, we see that for any $c \in \Z/3\Z$, we have
\begin{align*} 
\liminf_{P \to \infty} \sum_{\substack{c_1,c_2,c_3 \in \{1,2,3\}\\ c_1+c_2+c_3 = c\ \Mod{3}}} \E_{P \leq r \leq 2P: (r,W)=1}\int_X \E(F_{c_1}|Z_n)(T^r x) \E(F_{c_2}|Z_n)(T^{2r} x)\cdot\\
\cdot \E(F_{c_3}|Z_n)(T^{3r} x)\ d\mu(x) \gg 1,
\end{align*}
with the implied constants uniform in $n$.  Taking limits in $n$, we obtain Theorem \ref{theo-erg2}(ii) for arbitrary Kronecker systems.  Similarly for Theorem \ref{theo-erg2}(i) or Theorem \ref{theo-erg2}(iii).

\subsection{Main argument}

We continue the proof of Theorem \ref{theo-erg2}(ii).
Henceforth $X$ is a Kronecker system that is a compact abelian Lie group.  As the translation map $T$ is ergodic, the system $X$ must (up to isomorphism) take the form $X = G \times \Z/M\Z$ for some \emph{connected} compact abelian Lie group (i.e. a torus) and some $M \geq 1$, with shift given by $T(x,a) \coloneqq (x+\alpha,a+1)$ for some $\alpha \in G$, such that the translation $x \mapsto x+\alpha$ is ergodic on $G$ (and hence totally ergodic, since $G$ is connected and so the Pontragyin dual $\hat G$ is torsion-free).  Applying the hypothesis \eqref{erg2} with $q=M$, we conclude in particular that
$$
 \lim_{H \to \infty} \int_G | \E_{h \in [H]} F_c(x+M\alpha h, a) - \delta_c|\ d\mu_G(x) = 0$$
for all $c = 1,2,3$ and $a \in \Z/M\Z$, where $\mu_G$ is the Haar probability measure on $G$. By the ergodic theorem and total ergodicity of the shift $x \mapsto x+\alpha$, we thus have
\begin{equation}\label{fmean}
 \int_G F_c(x,a)\ d\mu_G(x) = \delta_c
\end{equation}
for all $c=1,2,3$ and $a \in \Z/M\Z$.

Next, we expand the left-hand side of \eqref{limps} as
\begin{align*}
\liminf_{P \to \infty} \sum_{\substack{c_1,c_2,c_3 \in \{1,2,3\}\\ c_1+c_2+c_3 = c\ \Mod{3}}} \E_{P \leq r \leq 2P: (r,W)=1} \E_{a \in \Z/M\Z} \int_G F_{c_1}(x+r\alpha,a+r) F_{c_2}(x+2r\alpha,a+2r)\cdot\\
\cdot F_{c_3}(x+3r\alpha, a+3r)\ d\mu_G(x).
\end{align*} 
We split $r$ into residue classes modulo $MW$ to write this as
\begin{align}\begin{split}\label{eq50}
\liminf_{P \to \infty} \sum_{\substack{c_1,c_2,c_3 \in \{1,2,3\}\\ c_1+c_2+c_3 = c\ \Mod{3}}} \E_{b \in [MW]: (b,W)=1} \E_{P \leq r \leq 2P: r = b\ \Mod{MW}} \E_{a \in \Z/M\Z} \int_G F_{c_1}(x+r\alpha,a+b)\cdot\\
\cdot F_{c_2}(x+2r\alpha,a+2b) F_{c_3}(x+3r\alpha, a+3b)\ d\mu_G(x).
\end{split}
\end{align} 
A standard calculation (see \cite[Theorem 2.1]{furst-weiss}) shows that
\begin{align*}
&\lim_{P \to \infty} \E_{P \leq r \leq 2P: r = b\ (MW)} \int_G F_{c_1}(x+r\alpha,a+b) F_{c_2}(x+2r\alpha,a+2b) F_{c_3}(x+3r\alpha, a+3b)\ d\mu_G(x)\\
&= 
\int_G \int_G F_{c_1}(x+y,a+b) F_{c_2}(x+2y, a+2b) F_{c_3}(x+3y, a+3b)\ d\mu_G(x) d\mu_G(y)
\end{align*} 
and so the expression in \eqref{eq50} can be simplified to
\begin{align*}
 \sum_{\substack{c_1,c_2,c_3 \in \{1,2,3\}\\ c_1+c_2+c_3 = c\ \Mod{3}}} \E_{b \in [MW]: (b,W)=1} \E_{a \in \Z/M\Z} A_{c_1,c_2,c_3}(a+b,a+2b,a+3b)
\end{align*} 
where
$$
A_{c_1,c_2,c_3}(a_1,a_2,a_3) \coloneqq \int_G \int_G F_{c_1}(x+y,a_1) F_{c_2}(x+2y,a_2) F_{c_3}(x+3y, a_3)\ d\mu_G(x) d\mu_G(y).$$
The condition $(b,W)=1$ clearly implies $(r,M,W)=1$ for any $r \in \Z/M\Z$ with $b = r\ (M)$. Conversely, if $(r,M,W)=1$, then from the Chinese remainder theorem we see that there are precisely $\frac{(M,W) \phi(W)}{\phi((M,W))}$ values of $b \in [MW]$ with $(b,W)=1$ and $b = r\ (M)$.  Thus the above expression can also be written as
$$
 \sum_{\substack{c_1,c_2,c_3 \in \{1,2,3\}\\ c_1+c_2+c_3 = c\ \Mod{3}}} \E_{a,r \in \Z/M\Z: (r,M,W)=1} A_{c_1,c_2,c_3}(a+r,a+2r,a+3r).
$$
Thus, to prove Theorem \ref{theo-erg2}(ii), we can assume for sake of contradiction that
\begin{equation}\label{assume-i}
\E_{a,r \in \Z/M\Z: (r,M,W)=1} A_{c_1,c_2,c_3}(a+r,a+2r,a+3r) \leq \eps
\end{equation}
for all $c_1,c_2,c_3\in \Z/3\Z$ satisfying $c_1+c_2+c_3=c$, and some sufficiently small $\eps>0$ depending on $\delta_1,\delta_2,\delta_3$.  Similarly, to prove Theorem \ref{theo-erg2}(i) or Theorem \ref{theo-erg2}(iii), we may assume for sake of contradiction that
\begin{equation}\label{assume-ii}
\E_{a,r \in \Z/M\Z: (r,M,W)=1} A_{1,2,3}(a+r,a+2r,a+3r) \leq \eps.
\end{equation}

We can now easily dispose of the case (i) by using the following inequality of ``Pollard-type'' \cite{pollard}.

\begin{lemma}[Pollard-type inequality]\label{le_pollard} Let $G$ be a torus of any dimension equipped with its Haar measure $\mu_G$, and let $F_1,F_2,F_3: G \to [0,1]$ be measurable functions.  Set $\delta_i \coloneqq \int_G F_i(x)\ d\mu(x)$ for $i=1,2,3$, and write $\delta \coloneqq \min(\delta_1,\delta_2,\delta_3)$.  Then, for any distinct integers $m_1,m_2,m_3$, one has 
$$ \int_G \int_G F_1(x+m_1 y) F_2(x+m_2 y) F_3(x+m_3y)\ d\mu_G(x) d\mu_G(y) \geq \frac{1}{4} \max(\delta_1+\delta_2+\delta_3-1,0)^2 $$
if $\delta_1+\delta_2+\delta_3 \leq 1 + 2\delta$, and
$$ \int_G \int_G F_1(x+m_1 y) F_2(x+m_2 y) F_3(x+m_3y) \geq \delta( \delta_1+\delta_2+\delta_3-1-\delta)$$
if $\delta_1+\delta_2+\delta_3 > 1 + 2\delta$.
\end{lemma}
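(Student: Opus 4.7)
The plan is to reduce the triple integral to a two-function convolution on the torus $G$ and then invoke the Pollard-type inequality from \cite{tao-kemperman}. First, by the layer-cake formula $F_i(x)=\int_0^1 1_{\{F_i>s\}}(x)\,ds$ and Fubini, we reduce to the case where each $F_i=1_{A_i}$ is the indicator of a measurable set $A_i\subset G$ with $\mu_G(A_i)=\delta_i$. We shall sketch the argument in the representative case $(m_1,m_2,m_3)=(0,1,2)$; the general case reduces to $\gcd(m_2-m_1,m_3-m_1)=1$ after translating $x$ to set $m_1=0$ and rescaling $y\mapsto y/\gcd(m_2-m_1,m_3-m_1)$ on the connected torus $G$ (both Haar-preserving), after which the same argument applies, at worst with one of the $1_{A_i}$ replaced by its smoothing over the finite subgroup $G[m_2-m_1]$.

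The key step is a change of variables: the integer matrix $\begin{pmatrix}1 & 1\\ 1 & 2\end{pmatrix}$ has determinant $+1$, so $\Phi(x,y)\coloneqq(x+y,x+2y)$ is a Haar-preserving bijection of $G\times G$. Substituting $(u,v)=\Phi(x,y)$, and then integrating out $v$ via the Haar-preserving reflection $v\mapsto 2u-v$, yields
\[
I\ =\ \int_G 1_{A_2}(u)\,(1_{A_1}*1_{A_3})(2u)\,d\mu_G(u)\ =\ \int_G \widetilde{1_{A_2}}(x)\,(1_{A_1}*1_{A_3})(x)\,d\mu_G(x),
\]
where the second equality uses that $u\mapsto 2u$ is a Haar-preserving cover of the connected torus $G$, and $\widetilde{1_{A_2}}(x)\coloneqq |G[2]|^{-1}\sum_{u:\,2u=x}1_{A_2}(u)$ is the corresponding averaging, which is still $[0,1]$-valued with integral $\delta_2$. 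The function $r\coloneqq 1_{A_1}*1_{A_3}$ is now a convolution of two $[0,1]$-valued functions on a compact connected abelian group with $L^1$-masses $\delta_1$ and $\delta_3$, so Pollard's inequality from \cite{tao-kemperman} asserts
\[
\int_G\min(r(x),t)\,d\mu_G(x)\ \geq\ t\cdot\min(\delta_1+\delta_3-t,1)\qquad\text{for all }t\in[0,\min(\delta_1,\delta_3)].
\]
Combining this with the trivial estimate $\int\widetilde{1_{A_2}}\cdot\min(r,t)\geq\int\min(r,t)-t(1-\delta_2)$ (which uses only $\widetilde{1_{A_2}}\in[0,1]$, $\int\widetilde{1_{A_2}}=\delta_2$, and $\min(r,t)\leq t$) yields the key family of inequalities
\begin{equation}\label{genfam-pollard}
I\ \geq\ t\bigl[\min(\delta_1+\delta_3-t,1)-(1-\delta_2)\bigr],\qquad t\in[0,\min(\delta_1,\delta_3)].
\end{equation}
Repeating the argument with a different coordinate of $\Phi$ playing the outer role (i.e., placing $F_1$ or $F_3$ rather than $F_2$ as the outer factor) shows that \eqref{genfam-pollard} holds with any permutation of $(\delta_1,\delta_2,\delta_3)$ on the right-hand side, so we are free to choose the outer index $i\in\{1,2,3\}$.

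The proof concludes by optimizing the choice of $t$ and outer index in \eqref{genfam-pollard}. Write $s\coloneqq\delta_1+\delta_2+\delta_3-1$ and $\delta\coloneqq\min_i\delta_i$. If $s\leq 2\delta$, we pick any outer index and $t=\max(s,0)/2$; then $t\leq\delta\leq\min(\delta_j,\delta_k)$ and $\delta_j+\delta_k-t\leq 1$, so \eqref{genfam-pollard} collapses to $t(s-t)=\tfrac14\max(s,0)^2$, which is the claimed bound in the first regime. If $s>2\delta$, we choose the outer index $i$ with $\delta_i=\delta$ and set $t=\delta_j+\delta_k-1=s-\delta$; then $t\leq\min(\delta_j,\delta_k)$ (since $\max(\delta_j,\delta_k)\leq 1$) and the minimum in the bracket of \eqref{genfam-pollard} equals $1$, yielding $I\geq t\delta=\delta(s-\delta)$, the claimed bound in the second regime. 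This completes the sketch. The main substantive input is Pollard's convolution inequality on connected compact abelian groups from \cite{tao-kemperman}; the main technical subtlety we anticipate is verifying that the change of variables reduces the triple correlation to a genuine convolution for arbitrary distinct integer triples $m_1,m_2,m_3$ (rather than only $(0,1,2)$), including the harmless smoothing over $G[m_2-m_1]$, after which the bound follows from a short one-parameter optimization.
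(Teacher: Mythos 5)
Your proof is genuinely different from the paper's, and the high-level strategy is sound, but the sketch has several gaps worth flagging.

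\textbf{What the paper does.} The paper lifts the fractional $F_i$ to indicator functions in a single step via the ``cylinder'' trick: on $\tilde G = G \times (\R/\Z)^3$ one sets $A_i \coloneqq \{(x,t_1,t_2,t_3): t_i \in [0,F_i(x)]\}$, so that $\mu_{\tilde G}(A_i)=\delta_i$ and the triple integral over $G\times G$ equals the corresponding triple integral of $1_{A_1},1_{A_2},1_{A_3}$ over $\tilde G\times\tilde G$. It then observes that $(\tilde x + m_1\tilde y,\tilde x + m_2\tilde y,\tilde x+m_3\tilde y)$ equidistributes over the subtorus $\{(m_3-m_2)z_1+(m_1-m_3)z_2+(m_2-m_1)z_3=0\}$, so the integral is exactly a three-fold convolution $1_{(m_3-m_2)^{-1}A_1}*1_{(m_1-m_3)^{-1}A_2}*1_{(m_2-m_1)^{-1}A_3}(0)$, and applies \cite[Corollary~3]{tao-kemperman} directly. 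This handles all distinct $(m_1,m_2,m_3)$ at once and needs no case analysis.

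\textbf{What you do differently.} You reduce to a two-set Pollard inequality by a change of variables that exposes one factor as ``outer'' and the other two as a convolution, then optimize a one-parameter family \eqref{genfam-pollard}. Your final optimization over $t$ and the choice of outer index is correct and does reproduce both regimes of the lemma. Your reduction to indicators is also correct, though the cleanest justification is not Fubini/Jensen (the lower bound $L(\delta_1,\delta_2,\delta_3)$ is \emph{not} jointly convex, so Jensen would not close the argument) but rather multilinearity of $I$ in $(F_1,F_2,F_3)$: one minimizes in each $F_i$ separately over the weak-$*$ compact convex set $\{0\le F\le1,\,\int F=\delta_i\}$ and uses Krein--Milman to land on indicator extreme points.

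\textbf{Gaps in the sketch.} (a) Your reduction of general $(m_1,m_2,m_3)$ to $(0,1,2)$ via ``rescaling $y\mapsto y/\gcd$'' is not a valid operation on a torus: dividing by an integer is a multi-valued covering, not a bijection, and after making $\gcd(m_2-m_1,m_3-m_1)=1$ you still need not arrive at $(0,1,2)$ (e.g.\ $(0,2,3)$), so the clean unimodular change of variables is not available in general and the ``smoothing over $G[m_2-m_1]$'' step needs to be carried out explicitly. (b) The claim that \eqref{genfam-pollard} ``holds with any permutation of $(\delta_1,\delta_2,\delta_3)$'' is needed for Case~2 of the optimization (you must be able to place the \emph{minimum} density in the outer position), but your change of variables only cleanly isolates the middle factor; isolating $F_1$ or $F_3$ introduces a further dilation and replaces an indicator with a $[0,1]$-valued smoothing, so you would need the two-set Pollard bound for $[0,1]$-valued functions, which again requires an extreme-point argument (the quantity $\int\min(f*g,t)$ being concave in each of $f,g$). (c) The two-set Pollard inequality you invoke is not \cite[Corollary~3]{tao-kemperman}, which is the three-set statement the paper uses; you would need to cite an appropriate two-set form (Macbeath/Kneser/Pollard on compact connected abelian groups). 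None of these are fatal, but they make your route substantially more laborious than the paper's lifting-plus-three-set-Pollard argument, which avoids every one of them.
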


\begin{proof}  We can replace the functions $F_i$ with indicator functions by the following lifting trick: if we define the subsets $A_i$ of the torus $\tilde G \coloneqq G \times (\R/\Z)^3$ for $i=1,2,3$ by the formula
$$ A_i \coloneqq \{ (x, t_1,t_2,t_3) \in \tilde G: t_i \in [0, F_i(x)]\}$$
then we see that $\delta_i = \mu_{\tilde G}(A_i)$ and
\begin{equation}\label{conv}
\begin{split}
& \int_G \int_G F_1(x+m_1 y) F_2(x+m_2 y) F_3(x+m_3 y)\ d\mu_G(x) d\mu_G(y) \\
&\quad = \int_{\tilde G} \int_{\tilde G} 1_{A_1}(\tilde x + m_1 \tilde y) 1_{A_2}(\tilde x+ m_2 \tilde y) 1_{A_3}(\tilde x+m_3 \tilde y)\ d\mu_{\tilde G}(\tilde x) d\mu_{\tilde G}(\tilde y).
\end{split}
\end{equation}
Observe that as $(\tilde x, \tilde y)$ ranges in $\tilde G \times \tilde G$, the triple $(\tilde x + m_1 \tilde y, \tilde x + m_2 \tilde y, \tilde x + m_3 \tilde y)$ ranges surjectively in the torus
$$ \{ (z_1,z_2,z_3) \in \tilde G^3: (m_3-m_2) z_1 + (m_1-m_3) z_2 + (m_2-m_1) z_3 = 0 \},$$
and furthermore that the Haar probability measure on $\tilde G \times \tilde G$ pushes forward to Haar probability measure on this torus.  Thus we can write the expression \eqref{conv} as a convolution
$$ 1_{(m_3-m_2)^{-1} A_1} * 1_{(m_1-m_3)^{-1} A_2} * 1_{(m_2-m_1)^{-1} A_3}(0)$$
where $(m_3-m_2)^{-1} A_1 := \{ \tilde x \in \tilde G: (m_3-m_2)\tilde x \in A_1\}$ has the same measure $\delta_1$ as $A_1$ (because the pushforward of Haar probability measure on $\tilde G$ by $\tilde x \mapsto (m_3-m_2) \tilde x$ is also Haar probability measure), and similarly for
$(m_1-m_3)^{-1} A_2$ and $(m_2-m_1)^{-1} A_3$.  By inner regularity we may assume that $A_1,A_2,A_3$ are all compact.  The claim now follows from \cite[Corollary 3]{tao-kemperman} (see also \cite[Theorem 1.1]{tao-kneser} for a closely related inequality).
\end{proof}

For any choice of $a,r$, we see from \eqref{fmean} and Lemma \ref{le_pollard} and the hypothesis $\delta_1+\delta_2+\delta_3>1$ of (i) that
$$ A_{1,2,3}(a+r,a+2r,a+3r) \gg 1.$$
Averaging over $a,r$ we contradict \eqref{assume-ii} if $\eps$ is small enough.

It remains to handle the critical cases (ii), (iii).  For this we use the following inverse theorem for Lemma \ref{le_pollard} that is deduced from the recent results in \cite{tao-kneser}.

\begin{theorem}[Inverse theorem]\label{invt}  Let $\delta_1,\delta_2,\delta_3>0$ be real numbers with $\delta_1+\delta_2+\delta_3=1$. Let $\kappa > 0$, and suppose that $\eps>0$ is sufficiently small depending on $\kappa$.  Let $G$ be a torus with Haar probability measure $d\mu_{G}$, and let $g_1,g_2,g_3: G \to [0,1]$ be such that
\begin{equation}\label{chicken}
\delta_i - \eps^{1/2} \leq \int_{G} g_i(x_0)\ d\mu_{G}(x_0) \leq \delta_i + \eps^{1/2}
\end{equation}
for $i=1,2,3$, and such that
\begin{equation}\label{egg}
 \int_{G} \int_{G} g_1(x_0+y_0) g_2(x_0+2y_0) g_3(x_0+3y_0)\ d\mu_{G}(x_0) d\mu_{G}(y_0) \leq \eps^{1/2}.
\end{equation}
Then there exists a non-zero element $\phi$ of the Pontragyin dual group $\hat G$ (thus $\phi: G \to\R/\Z$ is a continuous homomorphism that is not identically zero) and arcs $I_1,I_2,I_3$ in $\R/\Z$ of lengths exactly $\delta_1,\delta_2,\delta_3$, such that
\begin{align*}
g_1 &\approx_{\kappa} 1_{\phi^{-1}(I_1)} \\
g_2 &\approx_{\kappa} 1_{(2\phi)^{-1}(I_2)} \\
g_3 &\approx_{\kappa} 1_{\phi^{-1}(I_3)},
\end{align*}
where $2\phi: G \to \R/\Z$ is the map $(2\phi)(x_0) \coloneqq  2(\phi(x_0))$, and $g \approx_{\kappa} h$ denotes the estimate $\| g - h \|_{L^1(G,d\mu_{G})} \ll \kappa$.
\end{theorem}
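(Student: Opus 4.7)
The plan is to deduce Theorem \ref{invt} from the inverse Kneser/Pollard-type theorem on compact connected abelian groups established in \cite{tao-kneser}, by running in reverse the lifting strategy used to prove Lemma \ref{le_pollard}. First, I would lift to indicator functions on an enlarged torus: define $\tilde G := G \times (\R/\Z)^3$ and the compact subsets $A_i := \{(x_0, t_1, t_2, t_3) \in \tilde G : t_i \in [0, g_i(x_0)]\}$, so that $\mu_{\tilde G}(A_i) = \int_G g_i \in [\delta_i - \eps^{1/2}, \delta_i + \eps^{1/2}]$ by \eqref{chicken}. The same computation as in the proof of Lemma \ref{le_pollard} rewrites \eqref{egg} as the three-term arithmetic-progression count
\[
\int_{\tilde G} \int_{\tilde G} 1_{A_1}(\tilde x + \tilde y) 1_{A_2}(\tilde x + 2\tilde y) 1_{A_3}(\tilde x + 3\tilde y)\, d\mu_{\tilde G}(\tilde x)\, d\mu_{\tilde G}(\tilde y) \leq \eps^{1/2}.
\]
Since $\mu_{\tilde G}(A_1) + \mu_{\tilde G}(A_2) + \mu_{\tilde G}(A_3) = 1 + O(\eps^{1/2})$, we are in the critical regime of Pollard's inequality, in which this count is forced to be $O(\eps^{1/2})$, near the extremal value.

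Next, I would apply the inverse theorem from \cite{tao-kneser}: when the 3-AP count above is much smaller than $\mu_{\tilde G}(A_1)\mu_{\tilde G}(A_2)\mu_{\tilde G}(A_3)$, the triple $(A_1, A_2, A_3)$ must be $L^1(\tilde G)$-close to Bohr sets indexed by a single non-trivial character $\tilde\psi \in \widehat{\tilde G}$. More precisely, there will exist arcs $J_1, J_2, J_3 \subset \R/\Z$ of lengths exactly $\delta_1, \delta_2, \delta_3$ (by absorbing the $O(\eps^{1/2})$ error from \eqref{chicken} into $\kappa$) such that
\[
A_1 \approx_\kappa \tilde\psi^{-1}(J_1), \qquad A_2 \approx_\kappa (2\tilde\psi)^{-1}(J_2), \qquad A_3 \approx_\kappa \tilde\psi^{-1}(J_3).
\]
The factor $2$ on the middle set arises from the AP identity $\tilde\psi(\tilde x + \tilde y) + \tilde\psi(\tilde x + 3\tilde y) = 2\tilde\psi(\tilde x + 2\tilde y)$; annihilating the 3-AP count forces the arcs to be placed so that $(J_1 + J_3) \cap J_2 = \varnothing$ on $\R/\Z$, which is consistent with $|J_1| + |J_2| + |J_3| = 1$.

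Finally, I would descend the character to $G$ and unlift. Every character of $\tilde G$ takes the form $\tilde\psi(x_0, t_1, t_2, t_3) = \phi(x_0) + n_1 t_1 + n_2 t_2 + n_3 t_3$ for some $\phi \in \hat G$ and integers $n_1, n_2, n_3$. Since $A_1 = \{t_1 \leq g_1(x_0)\}$ is translation-invariant in the $t_2, t_3$ coordinates, its $L^1$-proximity to $\tilde\psi^{-1}(J_1)$ forces $n_2 = n_3 = 0$; the analogous considerations for $A_2$ and $A_3$ give $n_1 = 0$ as well (the characters $2\tilde\psi$ and $\tilde\psi$ must themselves have vanishing $t_1$-coefficient, and $\hat{\R/\Z} = \Z$ is torsion-free, so $2n_1 = 0$ implies $n_1 = 0$). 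Thus $\tilde\psi$ descends to $\phi \in \hat G$, and integrating out the $(\R/\Z)^3$-fibers in the $L^1$ approximations transfers the closeness down to $g_1 \approx_\kappa 1_{\phi^{-1}(J_1)}$, $g_2 \approx_\kappa 1_{(2\phi)^{-1}(J_2)}$, $g_3 \approx_\kappa 1_{\phi^{-1}(J_3)}$ in $L^1(G, \mu_G)$, as required. Non-triviality of $\phi$ is automatic: if $\phi$ were trivial, each $g_i$ would be $L^1$-close to the constant $\delta_i$, making the left-hand side of \eqref{egg} asymptotically $\delta_1\delta_2\delta_3 > 0$, which is incompatible with \eqref{egg} for $\eps$ sufficiently small (depending on $\kappa$).

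The main obstacle is locating the correct quantitative form of the inverse Pollard/Kneser theorem in \cite{tao-kneser}: one needs a version for the critical case $\mu(A_1) + \mu(A_2) + \mu(A_3) = 1$ on compact connected abelian groups that identifies the Bohr-set structure under a single character with the correct $2$-twist on the middle set, with $L^1$-deviation controlled polynomially by the deviation from the extremal value. The descent step, in which a character on the enlarged torus $\tilde G$ is shown to come from $G$, is essentially mechanical but relies crucially on the fact that the lifted sets $A_i$ are trivially translation-invariant in all fiber coordinates other than $t_i$.
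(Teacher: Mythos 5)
Your plan hinges on a putative ``inverse theorem for small 3-AP counts'' on compact connected abelian groups in \cite{tao-kneser}, but no such theorem is in that reference: \cite{tao-kneser} proves an inverse theorem for Kneser's \emph{sumset} inequality (when $\mu(A+B)$ is close to $\mu(A)+\mu(B)$, the sets $A,B$ are close to Bohr sets), plus a sumset lower bound (Corollary 1.2). The entire content of the paper's proof of Theorem \ref{invt} is precisely the derivation of the 3-AP inverse statement \emph{from} the sumset inverse theorem; you flag this as ``the main obstacle,'' but it is not an obstacle to be located in the literature, it is the theorem itself. Concretely, the paper never lifts to $\tilde G$ for the inverse direction. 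Instead it works on $G$ directly: it defines level sets $E_i \coloneqq \{g_i \geq \eps^{1/8}\}$ for $i=1,3$, uses the pointwise bounds $g_i - \eps^{1/8} \le 1_{E_i} \le \eps^{-1/8} g_i$ together with \eqref{egg} to deduce $\int_G g_2(z_0)\, 1_F(2z_0)\, d\mu_G(z_0) \ll \eps^{1/8}$ where $F$ is the set where the convolution $1_{E_1} * 1_{E_3}$ exceeds $\eps^{1/8}$, applies \cite[Corollary 1.2]{tao-kneser} to get $\mu_G(F) \geq \mu_G(E_1)+\mu_G(E_3) - O(\eps^{1/16})$, and then exploits the exact density condition $\delta_1+\delta_2+\delta_3=1$ to force $g_2 \approx 1_{F'}$ (with $F' = \{z_0: 2z_0 \in F\}$) and $g_i \approx 1_{E_i}$. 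Only then is the sumset inverse theorem \cite[Theorem 1.5]{tao-kneser} applied to the pair $(E_1,E_3)$, and the arc for $g_2$ is obtained by substituting the resulting Bohr structure back into \eqref{egg}.

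Your lifting step and character-descent argument are not wrong in themselves, and if one did have an inverse theorem for near-vanishing 3-AP counts over $\tilde G$ the approach would plausibly close. But as it stands the proposal assumes as a black box exactly the statement that the paper proves; the level-set reduction, the thresholding of the convolution to produce a sumset $F$, the use of the critical density $\sum\delta_i=1$ to pin down $g_2$ on $F'$ and $g_i$ on $E_i$, and the reduction to a \emph{two-set} sumset inverse theorem are all missing ideas. Without them, the middle paragraph of your argument (``Next, I would apply the inverse theorem from \cite{tao-kneser}\dots'') has no justification.
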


\begin{proof}  Introduce the sets $E_1, E_3 \subset G$ by the formulae
$$ E_i \coloneqq \{ x_0 \in G: g_i(x_0) \geq \eps^{1/8} \}$$
for $i=1,3$.  On the one hand, we have the pointwise bound
$$1_{E_i} \geq g_i - \eps^{1/8}$$
and hence from \eqref{chicken}
\begin{equation}\label{eig}
 \mu_{G}(E_i) \geq \delta_i - O(\eps^{1/8})
\end{equation}
for $i=1,3$.  On the other hand, from the pointwise bound
$$ 1_{E_i} \leq \eps^{-1/8} g_i, $$
and  \eqref{egg} we have
$$ \int_{G} \int_{G} 1_{E_1}(x_0+y_0) g_2(x_0+2y_0) 1_{E_3}(x_0+3y_0)\ d\mu_{G}(x_0) d\mu_{G}(y_0) \ll \eps^{1/4}$$
or equivalently (writing $x_0+2y_0=z_0$)
$$ \int_{G} g_2(z_0) 1_{E_1} * 1_{E_3}(2z_0)\ d\mu_{G}(z_0) \ll \eps^{1/4}.$$
In particular, if we let $F$ denote the set of points $x_0 \in G$ such that $1_{E_1} * 1_{E_3}(x_0) \geq \eps^{1/8}$, then
\begin{equation}\label{g2f}
 \int_{G} g_2(z_0) 1_{F}(2z_0)\ d\mu_{G}(z_0) \ll \eps^{1/8}.
\end{equation}
Applying \cite[Corollary 1.2]{tao-kneser} and \eqref{eig}, we have
\begin{equation}\label{mugof}
 \mu_{G}(F) \geq \mu_{G}(E_1) + \mu_{G}(E_3) - O(\eps^{1/16}) \geq \delta_1+\delta_3 - O( \eps^{1/16})
\end{equation}
If one sets $F' \coloneqq  \{ z_0: 2z_0 \in F \}$, then (as $G$ is a torus) $F'$ has the same measure as $F$, thus
\begin{equation}\label{mugof-2}
 \mu_{G}(F') \geq \mu_{G}(E_1) + \mu_{G}(E_3) - O(\eps^{1/16}) \geq \delta_1+\delta_3 - O( \eps^{1/16}).
\end{equation}
In particular, since $\delta_1+\delta_2+\delta_3=1$, we obtain
$$ \int_{G \backslash F'} g_2(z_0)\ d\mu_{G}(z_0) \leq 1 - \mu_G(F') \leq \delta_2 +  O( \eps^{1/16}).$$
On the other hand, from \eqref{g2f} one has
\begin{equation}\label{f2}
 \int_{F'} g_2(z_0)\ d\mu_{G}(z_0) \ll \eps^{1/8}.
\end{equation}
From \eqref{chicken} and \eqref{f2} we get
\begin{align}\label{eqq5}
\int_{G \backslash F'} g_2(z_0)\ d\mu_{G}(z_0) = \delta_2 +  O( \eps^{1/16}).
\end{align} 
From 
\begin{align*}
\delta_2-O(\varepsilon^{1/2})\leq \int_G g_2(z_0)\ d\mu_G(z_0)\leq \int_{F'} g_2(z_0)\ d\mu_G(z_0)+\mu_G(G\setminus F')   
\end{align*}
and \eqref{mugof-2}, \eqref{f2} we get
\begin{equation}\label{mugo}
 \mu_{G}(F') = \delta_1+\delta_3 - O( \eps^{1/16}).
\end{equation}
Lastly, from \eqref{eig} and \eqref{mugof-2} we get 
\begin{align}\label{eqq4}
\mu_G(E_1)=\delta_1+O(\varepsilon^{1/16}),\quad \mu_G(E_3)=\delta_3+O(\varepsilon^{1/16}).    
\end{align}
By \eqref{mugo}, we have $\mu_G(G \setminus F')=\delta_2+O(\varepsilon^{1/16})$, which together with \eqref{f2} and \eqref{eqq5} implies that
$$ \| g_2 - 1_{F'} \|_{L^1(G)} \ll \eps^{1/16}.$$
From \eqref{eqq4} we have for $i=1,3$ that
$$ \int_{E_i} g_i(x_0)\ d\mu_{G}(x_0) \leq \delta_i + O(\eps^{1/16});$$
but by definition of $E_i$ we have
$$ \int_{G \backslash E_i} g_i(x_0)\ d\mu_{G}(x_0) \ll \eps^{1/8}.$$
Now, by \eqref{chicken} actually
\begin{align*}
\int_{E_i} g_i(x_0)\ d\mu_{G}(x_0) = \delta_i + O(\eps^{1/16}).    
\end{align*}
Comparing the two previous formulas with \eqref{eqq4}, we conclude that
\begin{equation}\label{gio}
 \| g_i - 1_{E_i} \|_{L^1(G)} \ll \eps^{1/16}.
\end{equation}

As $F$ has the same measure as $F'$, we see from \eqref{mugo}, \eqref{eqq4} that
$$ \mu_{G}(F) = \mu_{G}(E_1) + \mu_{G}(E_2) - O(\eps^{1/16}).$$
Applying\footnote{See also \cite{iliopoulou}, \cite{griesmer} for closely related results.} \cite[Theorem 1.5]{tao-kneser}, there exists a non-trivial element $\phi \in \hat G$ and arcs $I_1,I_3 \subset \R/\Z$ such that
\begin{align}\label{eq6} \mu_{G}( E_1 \triangle \phi^{-1}(I_1) ), \mu_{G}( E_3 \triangle \phi^{-1}(I_3) ) \leq \kappa^2,
\end{align}
where $\triangle$ denotes symmetric difference.  (Note from the connectedness of $G$ that $\phi(G)$ must be all of $\R/\Z$, and hence $\phi$ pushes forward $\mu_{G}$ to Haar probability measure on $\R/\Z$.  The same claim then holds for $2\phi$.) Moreover, from \eqref{eqq4} we see that necessarily $\mu_G(I_i)=\delta_i+O(\varepsilon^{1/16})$, and since $\varepsilon$ is small enough in terms of $\kappa$, we may in fact add or remove a segment from $I_i$ so that its length becomes exactly $\delta_i$ while keeping \eqref{eq6} true (with possibly $2\kappa^2$ in place of $\kappa^2$).

Combining \eqref{eq6} with \eqref{gio} we see that
$$ \| g_i - 1_{\phi^{-1}(I_i)} \|_{L^1(G)} \ll \kappa^2$$
for $i=1,3$.  From \eqref{egg} we conclude that
$$ \int_{G} \int_{G} 1_{\phi^{-1}(I_1)}(x_0) g_2(x_0+y_0) 1_{\phi^{-1}(I_3)}(x_0+2y_0)\ d\mu_{G}(x_0) d\mu_{G}(y_0) \ll \kappa^2$$
or equivalently
$$ \int_{G} g_2(z_0) 1_{\phi^{-1}(I_1)} * 1_{\phi^{-1}(I_3)}(2z_0)\ d\mu_{G}(z_0) \ll \kappa^2.$$
Let $J$ be the interval $I_1+I_3$, shrunk on both sides by $\kappa$. Then $J$ is an arc of length $\delta_1+\delta_3 - 2\kappa$ and
$$ 1_{\phi^{-1}(I_1)} * 1_{\phi^{-1}(I_3)}(x_0) \gg \kappa$$
for $x_0 \in \phi^{-1}(J)$.  We conclude that
$$ \int_{G} g_2(z_0) 1_{\phi^{-1}(J)}(2z_0)\ d\mu_{G}(z_0) \ll \kappa.$$
If we let $I_2$ denote the complement of $I_1 + I_3$, then $I_2$ is an arc of length $\delta_2$ that differs from the complement of $J$ by two arcs of total length $\kappa$, and thus
$$ \int_{G \backslash (2\phi)^{-1}(I_2)} g_2(z_0)\ d\mu_{G}(z_0) \ll \kappa.$$
Also
$$ \int_{(2\phi)^{-1}(I_2)} g_2(z_0)\ d\mu_{G}(z_0) \leq \mu_{G}( \phi^{-1}(I_2) ) = \delta_2.$$
Combining this with \eqref{chicken} we see that
\begin{align*}
\int_{(2\phi)^{-1}(I_2)}g_2(x_0)\ d\mu_G(x_0)=\delta_2+O(\kappa),
\end{align*}
so
$$ \| g_2 - 1_{(2\phi)^{-1}(I_2)} \|_{L^1(G)} \ll \kappa,$$
and the claim follows.
\end{proof}

Let $\kappa>0$ be a small absolute constant to be chosen later, and suppose $\eps>0$ is sufficiently small depending on $\kappa$. 
Suppose first that \eqref{assume-i} holds for some $c \in \Z/3\Z$.  By Markov's inequality, this implies that
for $1-O(\eps^{1/2})$ of the pairs of $(a,r) \in \Z/M\Z \times \Z/M\Z$ with $(r,M,W)=1$, and any $c_1,c_2,c_3 \in \{1,2,3\}$ with $c_1+c_2+c_3=c\ \Mod{3}$, one has
$$ A_{c_1,c_2,c_3}( a+r,a+2r,a+3r) \ll \eps^{1/2},$$
Applying Theorem \ref{invt}, we conclude that for such pairs $(a,r)$, there exists a non-trivial element $\phi_{a,r;c_1,c_2,c_3} \in \hat G$ and arcs $I_{a,r;c_1,c_2,c_3,i} \subset \R/\Z$ for $i=1,2,3$ and any $c_1,c_2,c_3 \in \{1,2,3\}$ with $c_1+c_2+c_3=c\ \Mod{3}$, one has
\begin{align*}
F_{c_1}(\cdot,a+r) &\approx_{\kappa} 1_{\phi^{-1}_{a,r;c_1,c_2,c_3}(I_{a,r;c_1,c_2,c_3,1})} \\
F_{c_2}(\cdot,a+2r) &\approx_{\kappa} 1_{(2\phi_{a,r;c_1,c_2,c_3})^{-1}(I_{a,r;c_1,c_2,c_3,2})} \\
F_{c_3}(\cdot,a+3r) &\approx_{\kappa} 1_{\phi^{-1}_{a,r;c_1,c_2,c_3}(I_{a,r;c_1,c_2,c_3,3})}.
\end{align*}
From \eqref{fmean} we see that the arc $I_{a,r;c_1,c_2,c_3,i}$ has length $\delta_i + O(\kappa)$ for $i=1,2,3$.

Now we start removing the dependence of $\phi_{a,r;c_1,c_2,c_3}$ on the various parameters $a,r,c_1,c_2,c_3$.  The key lemma is the following.

\begin{lemma}\label{chak}  Let $0 < \sigma < 1/2$, and suppose that $\delta>0$ is sufficiently small depending on $\sigma$.  Let $\phi_1, \phi_2 \in \hat G$ be non-trivial, and let $I_1, I_2 \subset \R/\Z$ be arcs of length between $\sigma$ and $1-\sigma$.  Suppose that $1_{\phi^{-1}_1(I_1)} \approx_{\delta} 1_{\phi^{-1}_2(I_2)}$.  Then we have $\phi_2 = \pm \phi_1$.
\end{lemma}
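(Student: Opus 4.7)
The plan is to reduce the problem to one-dimensional Fourier analysis on $\R/\Z$ after a case split based on the relation between $\phi_1$ and $\phi_2$ inside $\hat G$. Since $G$ is a compact connected abelian Lie group (a torus) and each $\phi_i$ is a nontrivial continuous character, the image $\phi_i(G)$ is a closed connected subgroup of $\R/\Z$ distinct from a point, hence equal to $\R/\Z$; consequently $\phi_i$ pushes $\mu_G$ forward to Haar measure on $\R/\Z$ and $\mu_G(\phi_i^{-1}(I_i)) = |I_i|$. Comparing measures on both sides of the hypothesis $\|1_{\phi_1^{-1}(I_1)} - 1_{\phi_2^{-1}(I_2)}\|_{L^1} \ll \delta$ then yields $||I_1|-|I_2|| \ll \delta$. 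I will split into two cases according to whether $\phi_1$ and $\phi_2$ are $\Z$-linearly independent in $\hat G$.

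In the first case ($\phi_1, \phi_2$ linearly independent), the joint map $(\phi_1,\phi_2): G \to (\R/\Z)^2$ has closed connected image that projects surjectively onto each factor. If this image were a proper subgroup it would have to lie in a one-dimensional closed connected subgroup of the form $\{(ay, by) : y \in \R/\Z\}$ with coprime integers $a, b$, which would force the nontrivial relation $b\phi_1 = a\phi_2$ in $\hat G$ and contradict linear independence. Thus the joint pushforward of $\mu_G$ is Haar on $(\R/\Z)^2$, yielding the exact identity
\begin{align*}
\mu_G(\phi_1^{-1}(I_1) \triangle \phi_2^{-1}(I_2)) = |I_1|(1-|I_2|) + |I_2|(1-|I_1|) \geq 2\sigma^2,
\end{align*}
which contradicts the hypothesis once $\delta$ is small enough depending on $\sigma$.

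In the remaining case, $\phi_1, \phi_2$ span a rank-$1$ subgroup of $\hat G \cong \Z^d$, so taking a primitive generator $\phi_0$ of its saturation I can write $\phi_i = m_i \phi_0$ for nonzero integers $m_1, m_2$; pushing forward by $\phi_0$ (surjective, Haar-preserving) reduces the hypothesis to
$$ \|1_{m_1^{-1}I_1} - 1_{m_2^{-1}I_2}\|_{L^1(\R/\Z)} \ll \delta, $$
where $m^{-1}I \coloneqq \{y \in \R/\Z : my \in I\}$. It now suffices to show $|m_1|=|m_2|$, since that gives $\phi_2 = \pm\phi_1$. Assuming WLOG $|m_1| < |m_2|$ so that $m_2 \nmid m_1$, a direct computation gives $\widehat{1_{m^{-1}I}}(k) = \widehat{1_I}(k/m)$ when $m \mid k$ and vanishes otherwise, so at the frequency $k=m_1$ we obtain $\widehat{1_{m_1^{-1}I_1}}(m_1) = \widehat{1_{I_1}}(1)$ while $\widehat{1_{m_2^{-1}I_2}}(m_1) = 0$. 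Since $|\widehat{1_{I_1}}(1)| = |\sin(\pi|I_1|)|/\pi \geq \sin(\pi\sigma)/\pi$ using $|I_1| \in [\sigma,1-\sigma]$, the $L^1$-difference is bounded below by a positive constant depending only on $\sigma$, giving the required contradiction. The main subtlety lies in this last case: the reduction via a primitive character $\phi_0$ relies on the structure of $\hat G \cong \Z^d$ to produce a well-defined primitive generator of the saturated rank-$1$ subgroup, after which the argument becomes essentially one-dimensional Fourier.
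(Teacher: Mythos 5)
Your proof is correct, but it takes a genuinely different route from the paper. The paper gives a single unified Fourier argument on $G$ itself: it expands the inner product $\int_G 1_{\phi_1^{-1}(I_1)} 1_{\phi_2^{-1}(I_2)}\,d\mu_G$ as $\sum_{n\phi_1+m\phi_2=0}\check 1_{I_1}(n)\check 1_{I_2}(m)$, notes that torsion-freeness of $\hat G$ makes this a one-to-one pairing of frequencies, and then observes that if $\phi_1$ is not an integer multiple of $\phi_2$ one may drop the $n=1$ term and obtain a contradiction from Cauchy--Schwarz and Plancherel (the dropped term carries a definite fraction $c_\sigma$ of the $L^2$ mass). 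Symmetry then forces $\phi_1 = m\phi_2$ and $\phi_2 = m'\phi_1$ simultaneously, hence $mm'=1$. By contrast, you split into two cases according to $\Z$-linear independence in $\hat G$: in the independent case you invoke joint equidistribution of $(\phi_1,\phi_2)$ to compute the symmetric-difference measure exactly as $|I_1|(1-|I_2|)+|I_2|(1-|I_1|)\geq 2\sigma^2$; in the dependent case you pick a primitive generator $\phi_0$ of the saturated rank-one subgroup, push forward to $\R/\Z$, and compare the single Fourier mode $k=m_1$, at which one side is bounded below by $\sin(\pi\sigma)/\pi$ and the other vanishes unless $m_2\mid m_1$. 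Note that the paper's case boundary (``$\phi_1$ is/is not an integer multiple of $\phi_2$'') is not the same as yours (linear independence): the paper's Cauchy--Schwarz step also absorbs the rationally-dependent-but-not-a-multiple case (e.g. $\phi_1=2\phi_0$, $\phi_2=3\phi_0$), which for you lands in the primitive-generator branch. Your approach is arguably more explicit in the dependent case (one targeted frequency instead of a global Cauchy--Schwarz), but it leans on $\hat G$ being free abelian of finite rank to manufacture $\phi_0$; the paper's argument only uses torsion-freeness of $\hat G$ and so extends verbatim to arbitrary compact connected abelian groups. In the paper's application $G$ is a finite-dimensional torus, so both arguments apply.
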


\begin{proof}  By hypothesis, we have
\begin{align}\label{eqq7}
\int_{G} 1_{\phi^{-1}_1(I_1)} 1_{\phi^{-1}_2(I_2)}\ d\mu_{G} = \mu_{G}( \phi^{-1}_1(I_1) ) + O(\delta) = m(I_1) + O(\delta)
\end{align}
and similarly for $\phi_2$ and $I_2$, where $m$ denotes Lebesgue measure on $\R/\Z$.  In particular $m(I_2) = m(I_1) + O(\delta)$.
By Fourier inversion, the left-hand side of \eqref{eqq7} is equal to
$$ \sum_{\substack{n,m \in \Z\\ n \phi_1 + m \phi_2 = 0}} \check 1_{I_1}(n) \check 1_{I_2}(m)$$
where
$$ \check 1_{I_1}(n) \coloneqq \int_{\R/\Z} 1_{I_1}(\alpha) e(-n\alpha)\ d\alpha$$
and similarly for $\check 1_{I_2}(m)$.  On the other hand, as $G$ is connected, the Pontryagin dual $\hat G$ is torsion-free, so for each $n$ there is at most one $m$ such that $n \phi_1 + m \phi_2 = 0$ and vice versa.  If $\phi_1$ is not an integer multiple of $\phi_2$, then we may omit the $n=1$ terms, and conclude from Cauchy--Schwarz that
$$ \left( \sum_{n \in \Z \backslash \{1\}} |\check 1_{I_1}(n)|^2\right)^{1/2}
\left( \sum_{m \in \Z} |\check 1_{I_2}(m)|^2\right)^{1/2} \geq m(I_1) + O(\delta).$$
On the other hand, from the Plancherel identity one has
$$ \sum_{m \in \Z} |\check 1_{I_2}(m)|^2 = m(I_2)$$
and (by explicit computation of $\check 1_{I_1}(1)$)
$$ \sum_{n \in \Z \backslash \{1\}} |\check 1_{I_1}(n)|^2 = m(I_1) - |\check 1_{I_1}(1)|^2 \leq m(I_1) - c_\sigma$$
for some quantity $c_\sigma>0$ depending only on $\sigma$.  For $\delta$ small enough, this leads to a contradiction.  Thus $\phi_1$ is an integer multiple of $\phi_2$, and similarly $\phi_2$ is an integer multiple of $\phi_1$; thus $\phi_2 = \pm \phi_1$ as claimed.
\end{proof}

From this lemma, we see that for each $c_1 \in \Z/3\Z$ and $a \in \Z/M\Z$, there is at most one non-trivial $\phi_{a;c_1} \in \hat G$ up to sign such that $F_{c_1}(\cdot,a) \approx_{\kappa} 1_{\phi^{-1}_{a,c}(I_{c_1})}$ for some arc $I_{c_1}$ of length $\delta_{c_1}$.  Select such a $\phi_{a;c_1}$ for each $a,c_1$ (or select $\phi$ arbitrarily if no such $I_{c_1}$ exists).  Then for $1-O(\eps^{1/2})$ of the pairs of $(a,r)$ with $(r,M,W)=1$, and any $c_1,c_2,c_3$ with $c_1+c_2+c_3=c$, we have
\begin{align*}
\phi_{a,r;c_1,c_2,c_3} &= \pm \phi_{a+r;c_1} \\
2\phi_{a,r;c_1,c_2,c_3} &= \pm \phi_{a+2r; c_2} \\
\phi_{a,r;c_1,c_2,c_3} &= \pm \phi_{a+3r;c_3}.
\end{align*}
In particular, for such a pair $(a,r)$ we have
$$ \phi_{a+r;c_1} = \pm \phi_{a+3r;c_3} $$
for any $c_1,c_3 \in \{1,2,3\}$ (choosing $c_2$ to be congruent to $c - c_1 - c_3$ modulo $3$), which implies in particular that $\phi_{a+r;c_1}$ does not depend on $c_1$ up to sign.  Thus, we can actually find a non-trivial $\phi_a \in \hat G$ for all $a \in \Z/M\Z$, such that one has
$$ \phi_{a+r} = \pm \phi_{a+3r}; \quad 2 \phi_{a+r} = \pm \phi_{a+2r}$$
for $1-O(\eps^{1/2})$ of the pairs of $(a,r)$ with $(r,M,W)=1$.  Replacing $(a,r)$ by $(a-r,r)$ and $(a+2r,-r)$, we also see that for $1-O(\eps^{1/2})$ of such pairs, we simultaneously have
$$ \phi_{a} = \pm \phi_{a+2r}; \quad 2 \phi_{a} = \pm \phi_{a+r}$$
and
$$ \phi_{a+r} = \pm \phi_{a-r}; \quad 2 \phi_{a+r} = \pm \phi_{a}$$
which implies in particular that
$$ 4 \phi_a = \pm \phi_a.$$
But this is impossible since $\hat G$ is torsion-free and $\phi_a$ is non-trivial.  This proves Theorem \ref{theo-erg2}(ii).

Now we turn to Theorem \ref{theo-erg2}(iii).  With $\kappa$ and $\eps$ as above, we now assume instead that $\delta_1 \neq \delta_3$ and that \eqref{assume-ii} holds.  Again using Markov's inequality followed by Theorem \ref{invt}, we now conclude that for $1-O(\eps^{1/2})$ of the pairs of $(a,r) \in \Z/M\Z \times \Z/M\Z$ with $(r,M,W)=1$, one has a non-trivial element $\phi_{a,r} \in \hat G$ and arcs $I_{a,r;i} \subset \R/\Z$ for $i=1,2,3$ such that
\begin{align*}
F_{1}(\cdot,a+r) &\approx_{\kappa} 1_{\phi^{-1}_{a,r}(I_{a,r;1})} \\
F_{2}(\cdot,a+2r) &\approx_{\kappa} 1_{(2\phi_{a,r})^{-1}(I_{a,r;2})} \\
F_{3}(\cdot,a+3r) &\approx_{\kappa} 1_{\phi^{-1}_{a,r}(I_{a,r;3})}.
\end{align*}
From \eqref{fmean} we see that the arc $I_{a,r;i}$ has length $\delta_i + O(\eps^{1/2})$ for $i=1,2,3$.

Applying Lemma \ref{chak}, we conclude that one can find non-trivial characters $\phi_a^{(i)} \in \hat G$ for $a \in \Z/M\Z$, $i=1,2,3$ such that for $1-O(\eps^{1/2})$ of the pairs of $(a,r) \in \Z/M\Z \times \Z/M\Z$ with $(r,M,W)=1$, we have
\begin{align}
\phi_{a,r} &= \pm \phi_{a+r}^{(1)}\label{ar1} \\
2\phi_{a,r} &= \pm \phi_{a+2r}^{(2)}\label{ar2} \\
\phi_{a,r} &= \pm \phi_{a+3r}^{(3)}\label{ar3}
\end{align}
so in particular
$$ \phi_{a+2r}^{(2)} = \pm 2 \phi_{a+r}^{(1)}.$$
Replacing $a$ by $a-r$, we conclude that for $1-O(\eps^{1/2})$ of the above pairs $(a,r)$, we have
$$ \phi_{a+r}^{(2)} = \pm 2 \phi_{a}^{(1)}.$$
This implies that for $1-O(\eps^{1/2})$ of the triples $(a,r,r') \in \Z/M\Z \times \Z/M\Z \times \Z/M\Z$ with $(r,M,W), (r',M,W) = 1$, we have
$$ \phi_{a+r}^{(2)} = \pm 2 \phi_{a}^{(1)}$$
and
$$ \phi_{a+r}^{(2)} = \pm 2 \phi_{a+r-r'}^{(1)}$$
which implies (by the torsion-free nature of $\hat G$) that
$$ \phi_{a+r-r'}^{(1)} = \pm \phi_{a}^{(1)}.$$
Iterating this two more times, we see that for $1-O(\eps^{1/2})$ of the septuples $(a,(r_i)_{i=1}^{6}) \in (\Z/M\Z)^7$ with $(r_i,M,W)=1$ for $1\leq i\leq 6$, one has
$$ \phi_{a+r_1-r_2+r_3-r_4+r_5-r_6}^{(1)} = \pm \phi_{a}^{(1)}.$$
For any $h \in \Z/M\Z$, the number of sextuples $(r_i)_{i=1}^{6} \in (\Z/M\Z)^6$ with $r_1-r_2+r_3-r_4+r_5-r_6=2h$ and $(r_i,M,W)=1$ for $1\leq i\leq 6$ can be computed using the Chinese remainder theorem to be comparable (up to absolute constants) to the quantity $\frac{1}{M} \left(\frac{\phi((M,W))}{(M,W)} M \right)^6$.  (The factor of $2$ here is needed to avoid the parity obstruction that $r_1-r_2+r_3-r_4$ is necessarily even if $(M,W)$ is even.) On the other hand, the number of representations of $r_1-r_2+r_3-r_4+r_5-r_6=2h$ where $r_i\in (\mathbb{Z}/M\mathbb{Z})$ and $(r_i,M,W)=1$, and $r_1$ (say) belongs to an exceptional set of size $O(\varepsilon^{1/2}|\{r\in \mathbb{Z}/M\mathbb{Z}:\,\, (r,M,W)=1\}|)$ is bounded by\footnote{The validity of this bound follows from the fact that the number of representations $-r_2+r_3-r_4+r_5-r_6=h'$ with $r_i\in \mathbb{Z}/M\mathbb{Z}$ and $(r_i,M,W)=1$ is uniformly $\ll \frac{1}{M}\left(\frac{\phi((M,W))}{(M,W)} M \right)^5$.} $\ll \varepsilon^{1/2}\frac{1}{M}\left(\frac{\phi((M,W))}{(M,W)} M \right)^6$.

From this and a double counting argument, we see that for $1-O(\eps^{1/2})$ of the pairs $(a,h) \in (\Z/M\Z)^2$, we have
$$ \phi_{a+2h}^{(1)} = \pm \phi_{a}^{(1)}.$$
We conclude that there exists a non-zero element $\phi$ of $\hat G$ such that
$$ \phi_{2a}^{(1)} = \pm \phi$$
for $1-O(\eps^{1/2})$ of $a \in \Z/M\Z$.  Inserting this back into \eqref{ar1}, \eqref{ar2}, \eqref{ar3} and double counting, we conclude that
$$ \phi_{2a}^{(2)} = \pm 2 \phi$$
and
$$ \phi_{2a}^{(3)} = \pm \phi$$
for $1-O(\eps^{1/2})$ of $a \in \Z/M\Z$.  This implies that for $1-O(\eps^{1/2})$ of $a \in \Z/M\Z$, we can find arcs $I_{a;1}, I_{a;2}, I_{a;3}$ in $\R/\Z$ such that
\begin{align*}
F_{1}(\cdot,a) &\approx_{\kappa} 1_{\phi^{-1}(I_{a;1})} \\
F_{2}(\cdot,a) &\approx_{\kappa} 1_{(2\phi)^{-1}(I_{a;2})} \\
F_{3}(\cdot,a) &\approx_{\kappa} 1_{\phi^{-1}(I_{a;3})}.
\end{align*}
Fix such an $a$.  From \eqref{fmean} we see that each arc $I_{a;i}$ has length $\delta_i + O(\kappa)$ for $i=1,2,3$. As $F_1+F_2+F_3=1$, we have
$$ 1 \approx_{\kappa} 1_{\phi^{-1}(I_{a;1})} + 1_{(2\phi)^{-1}(I_{a;2})} + 1_{\phi^{-1}(I_{a;3})}.$$
Since $\phi$ pushes forward $\mu_G$ to Haar measure $m$ on $\R/\Z$, we conclude that
$$ \int_{\R/\Z} |1_{I_{a;1}}(\theta) + 1_{I_{a;2}}(2\theta) + 1_{I_{a;3}}(\theta) - 1|\ dm(\theta) \ll \delta,$$
which implies that the set $I_{a;1} \cup I_{a;3}$ differs by at most $O(\delta)$ in measure from the set $\{ \theta \in \R/\Z: 2 \theta \not \in I_{a;2}\}$.  But since $I_{a;2}$ is an arc length $\delta_2 + O(\kappa)$, the set $\{ \theta \in \R/\Z: 2 \theta \not \in I_{a;2}\}$ is the union of two arcs of length $\frac{1-\delta_2}{2} + O(\kappa)$, separated from each other by distance $\frac{\delta_2}{2} + O(\kappa)$.  Since $\delta_1 \neq \delta_3$ and $\delta_1+\delta_2+\delta_3=1$, $\delta_1$ and $\delta_3$ are both distinct from $\frac{1-\delta_2}{2}$.  As $I_{a;1}$ and $I_{a;3}$ are arcs of length $\delta_1+O(\kappa)$ and $\delta_3+O(\kappa)$, this leads to a contradiction for $\kappa$ small enough.  This proves Theorem \ref{theo-erg2}(iii).

\section[The main theorem for k>3]{The main theorem for $k\geq 4$.}\label{sec: higher}

We now prove Theorem \ref{theo-erg2}(iv).  By reducing the functions $F_i$ by an appropriate scalar multiple, we may assume that $\delta_1 = \dots = \delta_k = \delta$ for some $\delta > c_k$.  From \eqref{erg2} and the triangle inequality, we have
$$ \lim_{P \to \infty} \E_{d \leq P: (d,W)=1} \int_X F_i( T^{id} x)\ d\mu(x) = \delta $$
for any $1 \leq i \leq k$, and also
$$ \lim_{P \to \infty} \E_{d \leq P: (d,W)=1} \int_X F_i( T^{id} x) F_{i'}( T^{i'd} x)\ d\mu(x) = \delta^2 $$
for any $1 \leq i < i' \leq k$ (this can be seen by first changing variables from $x$ to $T^{id} x$, pulling the $d$ sum inside the integral, and using \eqref{erg2} and the triangle inequality).  This implies that
\begin{equation}\label{md1}
 \lim_{P \to \infty} \E_{d \leq P: (d,W)=1} \int_X (1-F_i( T^{id} x))\ d\mu(x) = 1-\delta 
\end{equation}
and
\begin{equation}\label{md2}
 \lim_{P \to \infty} \E_{d \leq P: (d,W)=1} \int_X (1-F_i( T^{id} x)) (1-F_{i'}( T^{i'd} x))\ d\mu(x) = (1-\delta)^2.
\end{equation}
Now we bound triple correlations.

\begin{lemma}  For $1 \leq i < i' < i'' \leq k$, one has
$$ \lim_{P \to \infty} \E_{d \leq P: (d,W)=1} \int_X (1-F_i( T^{id} x)) (1-F_{i'}( T^{i'd} x)) (1-F_{i''}( T^{i''d} x))\ d\mu(x) \leq \frac{3}{4} (1-\delta)^2.$$
\end{lemma}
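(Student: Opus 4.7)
The plan is to reduce to a lower bound on the triple correlation of the $F_i$'s by inclusion–exclusion, and then obtain that lower bound from the Pollard-type inequality of Lemma \ref{le_pollard} after reducing to the Kronecker factor, exactly as in Section \ref{sec: 3}. Expanding
\begin{align*}
(1-F_i(T^{id}x))(1-F_{i'}(T^{i'd}x))(1-F_{i''}(T^{i''d}x))
\end{align*}
and integrating, the one- and two-point terms converge, via \eqref{md1} and \eqref{md2}, to $1-3\delta+3\delta^2$. It therefore suffices to show
\begin{align*}
L \coloneqq \lim_{P\to\infty} \E_{d\leq P:(d,W)=1}\int_X F_i(T^{id}x)F_{i'}(T^{i'd}x)F_{i''}(T^{i''d}x)\,d\mu(x) \geq \tfrac{1}{4}(3\delta-1)^2,
\end{align*}
noting that $\delta > c_k > 1/3$ for $k\geq 4$, so the right-hand side is non-negative.

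To obtain this lower bound I will rerun the reductions from Section \ref{sec: 3}: ergodic decomposition reduces matters to an ergodic system, then passage to the Kronecker factor $Z^1$ (which is characteristic for any three-point average along three distinct linear forms, so that $F_j$ may be replaced by $\E(F_j\mid Z^1)$, which is still $[0,1]$-valued with mean $\delta$), and finally the standard inverse-limit argument reduces to the case in which the Kronecker factor is a compact abelian Lie group $X = G\times \Z/M\Z$ with $G$ a connected torus and $T(x,a) = (x+\alpha,a+1)$, the translation by $\alpha$ being totally ergodic on $G$. The Furstenberg–Weiss-type computation used in Section \ref{sec: 3} then expresses $L$ as
\begin{align*}
L = \E_{a, r\in \Z/M\Z:\,(r,M,W)=1} A(a+ir,a+i'r,a+i''r),
\end{align*}
where
\begin{align*}
A(a_1,a_2,a_3) \coloneqq \int_G\int_G F_i(x+iy,a_1)F_{i'}(x+i'y,a_2)F_{i''}(x+i''y,a_3)\,d\mu_G(x)\,d\mu_G(y).
\end{align*}

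For each fixed $(a_1,a_2,a_3)$, total ergodicity gives $\int_G F_j(\cdot,a_j)\,d\mu_G = \delta$ as in \eqref{fmean}. Since $i,i',i''$ are distinct integers and $3\delta \leq 1+2\delta$ trivially, Lemma \ref{le_pollard} (applied to the three slices with $m_1=i$, $m_2=i'$, $m_3=i''$) gives the pointwise lower bound $A(a_1,a_2,a_3) \geq \tfrac{1}{4}\max(3\delta-1,0)^2 = \tfrac{1}{4}(3\delta-1)^2$, hence $L \geq \tfrac{1}{4}(3\delta-1)^2$ after averaging in $a,r$. Combining everything,
\begin{align*}
\lim_{P\to\infty}\E_{d\leq P:(d,W)=1}\int_X \prod_{j\in\{i,i',i''\}}(1-F_j(T^{jd}x))\,d\mu(x) \leq 1 - 3\delta + 3\delta^2 - \tfrac{1}{4}(3\delta-1)^2,
\end{align*}
and a direct expansion shows the right-hand side equals $\tfrac{3}{4}(1-\delta)^2$, as required.

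There is no substantive obstacle: the inclusion–exclusion identity is immediate, the reduction to the Lie Kronecker factor is exactly the one performed for parts (i)–(iii) of Theorem \ref{theo-erg2} in Section \ref{sec: 3} (and does not rely on the critical-density machinery), and the Pollard-type inequality is used as a black box in its most standard regime $\delta_1+\delta_2+\delta_3\leq 1+2\delta$. The only mildly delicate point is to record that conditional expectation onto the Kronecker factor preserves both the $[0,1]$ range and the mean $\delta$, so that the hypothesis $\delta > 1/3$ is retained when applying Lemma \ref{le_pollard}.
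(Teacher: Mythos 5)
Your proof is correct and follows essentially the same strategy as the paper's: inclusion--exclusion to isolate a triple correlation, reduction to the Kronecker Lie-group factor, then Lemma \ref{le_pollard}. The one difference is cosmetic: the paper peels off a single factor, reducing to a lower bound $\int (1-F_i)(1-F_{i'})F_{i''} \geq \frac14(1-\delta)^2$ via Pollard applied to densities $(1-\delta,1-\delta,\delta)$, whereas you expand fully and lower-bound $\int F_iF_{i'}F_{i''} \geq \frac14(3\delta-1)^2$ via Pollard applied to densities $(\delta,\delta,\delta)$; the algebra in both cases conveniently lands on $\frac34(1-\delta)^2$. Your variant is, if anything, a hair more direct, and your remark that passing to the Kronecker factor preserves the $[0,1]$ range and the mean is the right point to record.
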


\begin{proof}  By inclusion-exclusion, it suffices to show that
$$ \lim_{P \to \infty} \E_{d \leq P: (d,W)=1} \int_X (1-F_i( T^{id} x)) (1-F_{i'}( T^{i'd} x)) F_{i''}( T^{i''d} x)\ d\mu(x) \geq \frac{1}{4} (1-\delta)^2.$$
By repeating the arguments\footnote{Here it is essential that there are only three factors in the average considered here, so that the average is of ``complexity one'' and can thus be controlled by the Kronecker factor.  The same is not true for the original average \eqref{dw}, but we will not need to directly pass to characteristic factors for that average.} of the previous section, to prove this it suffices to do so when $X = G \times \Z/M\Z$ with $G$ a torus with shift $T(x,a) = (x+\alpha,a+1)$.  It then suffices to establish the lower bound
$$ \int_G \int_G (1-F_{i,a+ir}(x+iy)) (1-F_{i',a+i'r}(x+i'y)) F_{i'',a+i''r}(x+i''y)\ d\mu_G(x) d\mu_G(y) \geq \frac{1}{4} (1-\delta)^2$$
for all $a,r \in \Z/M\Z$, where the $F_{i,a}: G \to [0,1]$ are measurable functions of mean $\delta$.  But this follows from Lemma \ref{le_pollard} (noting that $\delta > c_k > 1/2$ and hence $(1-\delta)+(1-\delta)+\delta-1 \leq 1 + 2 \min(\delta,1-\delta)$).
\end{proof}

Let $\tilde X$ be the space $X \times [0,1]^k$ with the product measure $d\mu dt_1 \dots dt_k$, and for each $1 \leq i \leq k$, let $E_i \subset \tilde X$ denote the set
$$ E_i \coloneqq \{ (x,t_1,\dots,t_k) \in \tilde X: t_i > F_i(x) \},$$
then from the above lemma we have
$$ \lim_{P \to \infty} \E_{d \leq P: (d,W)=1} \int_{\tilde X} 1_{E_i}( T^{id} x, t ) 1_{E_{i'}}( T^{i'd} x, t ) 1_{E_{i''}}( T^{i''d} x, t )\ d\mu(x) dt \leq \frac{3}{4} (1-\delta)^2.$$
Hence, if $N(d,x,t)$ denotes the counting function
$$ N(d,x,t) \coloneqq \sum_{i=1}^k 1_{E_i}(T^{id} x, t)$$
then on summing the preceding assertion in $i,i',i''$ we obtain
$$ \lim_{P \to \infty} \E_{d \leq P: (d,W)=1} \int_{\tilde X} \binom{N(d,x,t)}{3}\ d\mu(x)\ dt \leq \binom{k}{3} \frac{3}{4} (1-\delta)^2.$$
Applying similar arguments to \eqref{md1}, \eqref{md2} we obtain
$$ \lim_{P \to \infty} \E_{d \leq P: (d,W)=1} \int_{\tilde X} \binom{N(d,x,t)}{2}\ d\mu(x)\ dt = \binom{k}{2} (1-\delta)^2$$
and
$$ \lim_{P \to \infty} \E_{d \leq P: (d,W)=1} \int_{\tilde X} \binom{N(d,x,t)}{1}\ d\mu(x)\ dt = \binom{k}{1} (1-\delta).$$
On the other hand, if \eqref{dw} fails, then
$$ \lim_{P \to \infty} \E_{d \leq P: (d,W)=1} \int_{\tilde X} 1_{N(d,x,t) = 0}\ dt \ll \eps$$
for any given $\eps$.

Next, note that for any integer $1\leq a\leq k$ we have the inequality
\begin{align*}
(N(d,x,t)-1)(N(d,x,t)-a)(N(d,x,t)-a+1))\geq 0    
\end{align*}
whenever $N(d,x,t) \neq 0$, since $N(d,x,t)$ is then an integer from $1$ to $k$.  On using the identities\footnote{More generally one has $x^n = \sum_{k=1}^n k! S(n,k) \binom{x}{k}$, where $S(n,k)$ are the Stirling numbers of the second kind.}
\begin{align*}
x^3&=6\binom{x}{3}+6\binom{x}{2}+\binom{x}{1},\\
x^2&=2\binom{x}{2}+\binom{x}{1},
\end{align*}
this gives
\begin{align*}
6 \binom{N(d,x,t)}{3}+(6-4a)\binom{N(d,x,t)}{2}+(a^2-a)\binom{N(d,x,t)}{1}-a(a-1)\geq 0.    
\end{align*}
Averaging in $d,x,t$ and using the previous estimates, we conclude that
\begin{align}\label{eq30}
\left(\frac{9}{2}\binom{k}{3}+(6-4a)\binom{k}{2}\right)(1-\delta)^2+(a^2-a)k(1-\delta)-a(a-1)\geq -O_k(\eps).    
\end{align}
We take $a=a_k=\lceil\frac{3k+2}{4}\rceil$ here (this turns out to be the optimal choice). Then this becomes exactly the same quadratic equation as in the definition of $c_k$ in Theorem \ref{theo2}, which gives the desired contradiction if $\eps$ is small enough. This concludes the proof of Theorem \ref{theo-erg2}(iv). 

\section[Obstructions for higher values of k]{Obstructions for higher values of $k$}

In this section we give some limitations as to how much the value $c_k=1-\frac{1}{k-\frac{4}{3}+o(1)}$ appearing in Theorem \ref{theo-erg2}(iv) may be lowered for large values of $k$.

\begin{lemma}\label{2m2}  Let $m \geq 3$ be a natural number.  Then there exist shifts $a_1,\dots,a_{m^2} \in \R/\Z$ such that the ``strips''
$$ S_i \coloneqq \{ (x,y) \in (\R/\Z)^2: x + iy \in a_i + [0, \frac{2}{m}] \hbox{ mod } 1 \}$$
for $i=1,\dots,m^2$ cover the entire torus $(\R/\Z)^2$.
\end{lemma}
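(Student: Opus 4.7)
The plan is to apply the probabilistic method: choose $a_1,\dots,a_{m^2}$ independently and uniformly at random in $\R/\Z$, and show that with positive probability the resulting strips cover $\T^2$, which in particular gives existence.

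Write $b_i(y) \coloneqq a_i - iy \bmod 1$. The condition $\bigcup_i S_i = \T^2$ is equivalent to the following: for every $y \in \R/\Z$ the intervals $[b_i(y), b_i(y)+2/m]$, $i=1,\dots,m^2$, cover $\R/\Z$, which in turn is equivalent to the maximum circular gap $f(y)$ of the multiset $\{b_i(y)\}_{i=1}^{m^2}$ being at most $2/m$. The first step is to reduce this to a finite condition: as $y$ varies, each $b_i(y)$ moves linearly at rate $-i$, so the sorted order of $\{b_i(y)\}$ changes only at ``crossings'' $y^*$ where some pair $b_i(y^*) = b_j(y^*)$. Between consecutive crossings every sorted gap is an affine function of $y$, so $f(y)$ is the pointwise maximum of finitely many affine functions, hence convex piecewise linear, and $\max_y f(y)$ is therefore attained at some crossing.

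The next step is to enumerate and analyse the crossings. For each pair $i < j$ the equation $(j-i)y \equiv a_j - a_i \bmod 1$ has exactly $j-i$ solutions in $\R/\Z$, giving a total of $\sum_{i<j}(j-i) = m^2(m^4-1)/6$ crossings. At a crossing $y^*$ coming from a pair $(i,j)$, conditioning on $(a_i, a_j)$ determines $y^*$ and the common value $b_i(y^*) = b_j(y^*)$, while for each $k \notin \{i,j\}$ the quantity $b_k(y^*) = a_k - k y^*$ remains uniform on $\R/\Z$ and independent across $k$ (since $a_k$ is uniform and independent of $(a_i, a_j)$). Rotating the circle so that $b_i(y^*) = 0$, the multiset $\{b_k(y^*)\}$ becomes one fixed point at the origin together with $m^2 - 2$ i.i.d.\ uniform points, so its $m^2 - 1$ distinct values share the joint gap distribution of $m^2 - 1$ i.i.d.\ uniform points on $\R/\Z$. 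The standard union bound for the maximum gap of $n$ such points then gives $\Pr[f(y^*) > 2/m] \leq (m^2-1)(1-2/m)^{m^2-2}$, and summing over all crossings yields
\[
\Pr\bigl[\max_{y \in \R/\Z} f(y) > 2/m\bigr] \;\leq\; T(m) \coloneqq \frac{m^2(m^4-1)(m^2-1)}{6}\,(1-2/m)^{m^2-2}.
\]

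The proof is then completed by verifying $T(m) < 1$ for every $m \geq 3$, which gives a positive-probability choice of shifts $\vec a$ with $\max_y f(y) \leq 2/m$ and hence $\bigcup_i S_i = \T^2$. This quantitative check is the main obstacle. The crude asymptotic estimate $T(m) \lesssim m^8 e^{-2m}$ handles all $m$ beyond some explicit threshold (say $m \geq 8$), but the small cases $m = 3, 4, 5, 6, 7$ must be verified by direct computation from the exact formula above; for instance $T(3) = 960/2187 = 320/729$ and $T(4) = 10200/16384$, both visibly less than $1$, and the remaining cases are handled analogously.
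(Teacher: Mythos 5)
Your argument is correct, and it is genuinely different from the paper's proof. The paper proceeds by an explicit construction: it sets $a_1=\dots=a_m=0$ so that the first $m$ strips, evaluated on each horizontal circle $\{(x,y):x\in\R/\Z\}$ with $y\in[\frac1m,\frac2m]$, give $m$ overlapping arcs whose union has length $\geq 1$ and hence covers the circle; it then uses a ``Galilean transformation'' $a_{jm+i}=\frac{j}{m}i$ to slide this picture over the remaining bands $y\in[\frac{j+1}{m},\frac{j+2}{m}]$, $j=1,\dots,m-1$. You instead choose the shifts i.i.d.\ uniformly and run a probabilistic existence argument. The key reductions in your writeup are all sound: the covering condition is indeed equivalent to the maximum cyclic gap $f(y)$ of $\{a_i-iy\}_{i=1}^{m^2}$ being at most $2/m$ for every $y$; on each arc between consecutive crossings $f$ is the max of finitely many affine gap functions, hence convex there, so the global max of $f$ is attained at one of the $\sum_{1\le i<j\le m^2}(j-i)=\tfrac{m^2(m^4-1)}{6}$ crossings; at each crossing indexed by $(i,j,\ell)$, conditioning on $(a_i,a_j)$ makes the remaining $b_k(y^*)$ i.i.d.\ uniform, and the standard gap union bound gives $\Pr[f(y^*)>2/m]\leq(m^2-1)(1-2/m)^{m^2-2}$; union over crossings then yields $T(m)$. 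Your formula and the values $T(3)=320/729$, $T(4)=10200/16384$ are correct, and $T(5),T(6),T(7)$ evaluate to roughly $0.49$, $0.28$, $0.13$, while the bound $T(m)\leq\tfrac{1}{6}m^8e^{-2m+4/m}$ is below $1$ at $m=8$ (about $0.52$) and decreases for $m\geq8$ since $(1+\tfrac1m)^8e^{-2}<1$ there. So the numerical verification does close for all $m\geq3$. The trade-off between the two approaches: the paper's construction is shorter, deterministic, and requires no numerical bookkeeping; your probabilistic argument is more mechanical and would adapt with no new ideas to variants (different arc lengths, numbers of strips, or higher-dimensional tori), at the cost of the explicit inequality check $T(m)<1$. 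Both establish the lemma. The only loose end in your write-up is the phrase ``the remaining cases are handled analogously'' — for a clean proof you should tabulate $T(5),T(6),T(7)$ explicitly rather than assert it, since the asymptotic estimate only safely kicks in at $m=8$.
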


\begin{proof}  We set $a_1=\dots=a_m=0$.  Then for any $y \in [\frac{1}{m}, \frac{2}{m}]$, the strips $S_1,\dots,S_m$ intersect the circle $\{ (x,y): x \in \R/\Z\}$ in arcs $\{ (x,y): x \in [0, \frac{2}{m}] - iy \}$.  These $m$ arcs have length $\frac{2}{m}$, with consecutive arcs intersecting in an arc of length at most $\frac{1}{m}$.  The union of these $m$ arcs is then an arc of length at least $1$ and thus covers the whole circle.  Thus we have the inclusion
$$ (\R/\Z) \times [\frac{1}{m}, \frac{2}{m}] \subset S_1 \cup \dots \cup S_m.$$
By applying a ``Galilean transformation'', we conclude that for any $1 \leq j < m$, if we define $a_{jm+i} := \frac{j}{m} i$ for $i=1,\dots,m$, then for $y \in \frac{j}{m} + [\frac{1}{m}, \frac{2}{m}] = [\frac{j+1}{m}, \frac{j+2}{m}]$, the strips $S_{jm+1},\dots,S_{jm+m}$ intersect the circle $\{ (x,y): x \in \R/\Z\}$ in overlapping arcs of total length at least $1$, so that
$$ (\R/\Z) \times [\frac{j+1}{m}, \frac{j+2}{m}] \subset S_{jm+1} \cup \dots \cup S_{jm+m}.$$
 Taking the union over all $j=0,\dots,m-1$, we obtain the claim.
\end{proof}

\begin{corollary}\label{klow}  Let $k \geq 9$.  In Theorem \ref{theo-erg2}, one cannot replace $c_k$ with any quantity lower than $1-\frac{2}{\lfloor \sqrt{k} \rfloor}$.  (For $3\leq k<9$ this conclusion is vacuously true.)
\end{corollary}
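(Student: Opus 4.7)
The plan is to extract an explicit counterexample directly from Lemma \ref{2m2}. Set $m := \lfloor \sqrt{k}\rfloor$; since $k \geq 9$ we have $m \geq 3$ and $m^2 \leq k$. By Lemma \ref{2m2}, choose shifts $a_1,\dots,a_{m^2} \in \R/\Z$ so that the strips
$$ S_i := \{(x,y) \in (\R/\Z)^2 : x+iy \in a_i + [0,2/m] \bmod 1\} $$
cover $(\R/\Z)^2$, and extend arbitrarily by $a_{m^2+1}=\dots=a_k=0$.

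Take the measure-preserving system $X = (\R/\Z)^2$ with Haar measure $\mu$ and the (non-ergodic but measure-preserving) skew shift $T(x,y) := (x+y, y)$, so that $T^n(x,y) = (x+ny, y)$, and define
$$ F_i(x,y) := 1 - 1_{[0,2/m]}(x - a_i), \qquad i=1,\dots,k. $$

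The verification then splits into three short checks. First, each $F_i$ depends only on $x$ and has mean $\delta_i = 1 - 2/m$. Second, for fixed $q \geq 1$ and a.e.\ $y \in \R/\Z$ (namely those $y$ for which $qy$ is irrational), Weyl equidistribution gives
$$ \E_{h \leq H} F_i(T^{qh}(x,y)) = 1 - \E_{h \leq H} 1_{[0,2/m]}(x + qhy - a_i) \;\longrightarrow\; 1 - \tfrac{2}{m} $$
as $H \to \infty$, so bounded convergence yields hypothesis \eqref{erg2} with $\delta_i = 1 - 2/m$. Third, for every $d \geq 1$ and every $(x, y) \in (\R/\Z)^2$, the point $(x, dy)$ lies in some $S_i$ with $i \leq m^2$ by Lemma \ref{2m2}, and hence $F_i(T^{id}(x,y)) = F_i(x+idy,\, y) = 0$; therefore $\prod_{j=1}^{k} F_j(T^{jd}(x,y)) \equiv 0$ pointwise, and the averaged correlation in \eqref{dw} is identically zero for every $W$ and every $P$.

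Since $\delta_1 = \dots = \delta_k = 1 - 2/\lfloor \sqrt{k}\rfloor$ while the conclusion of Theorem \ref{theo-erg2}(iv) fails for this example, replacing $c_k$ by any strictly smaller quantity would allow the densities $\delta_i$ to still exceed the threshold, yielding a counterexample. No real obstacle is anticipated: all combinatorial content is already packaged in Lemma \ref{2m2}, and the analytic checks are immediate once one observes that $T^d$ acts on the first coordinate by translation by $dy$, so the covering of $(\R/\Z)^2$ by the $S_i$ translates directly into pointwise vanishing of the correlation at every scale $d$.
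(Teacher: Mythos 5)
Your construction is correct and follows the same idea as the paper's: with $m=\lfloor\sqrt{k}\rfloor$, take $F_i=1_{I_i}$ where $I_i$ is the complement of $a_i+[0,2/m]$ (with $a_1,\dots,a_{m^2}$ from Lemma \ref{2m2}), verify \eqref{erg2} by Weyl equidistribution, and observe that the covering from Lemma \ref{2m2} forces $\prod_i F_i(T^{id}\cdot)\equiv 0$ pointwise. The only difference is cosmetic: the paper works on $X=\R/\Z$ with a fixed irrational rotation $T\colon x\mapsto x+\alpha$, while you lift to $X=(\R/\Z)^2$ with $T(x,y)=(x+y,y)$ (which, incidentally, is not a skew shift in the usual sense, but is indeed measure-preserving); this avoids selecting a specific $\alpha$ but is otherwise the same argument.
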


\begin{proof}  Set $m \coloneqq \lfloor\sqrt{k}\rfloor$, so that $m \geq 3$ and $m^2 \leq k$.   Let $a_1, \dots, a_{m^2} \in \R/\Z$ be as in the preceding lemma, set $a_i$ arbitrarily for $m^2 < i \leq k$, and let $I_i$ be the complement of $a_i + [0, \frac{2}{m}] \hbox{ mod } 1$ in $\R/\Z$ for $i=1,\dots,k$.  By the above lemma, we have
$$ \prod_{i=1}^k 1_{I_i}( x + iy ) = 0 $$
for all $x,y \in \R/\Z$.  If we then set $X$ to be the unit circle $\R/\Z$ with Haar measure and an irrational shift $T: x \mapsto x +\alpha$ for some irrational $\alpha \in \R/\Z$, and set $F_i \coloneqq 1_{I_i}$, we obtain the claim (with $\delta_i = 1 - \frac{2}{m} = 1 - \frac{2}{\lfloor \sqrt{k} \rfloor}$ for $i=1,\dots,k$).
\end{proof}

Clearly, any quantitative improvement in the covering construction in Lemma \ref{2m2} would lead to a stronger lower bound on the optimal value of $c_k$ in Corollary \ref{klow}.  We do not know however whether the optimal value behaves like $1-\frac{1}{k}$, like $1-\frac{1}{\sqrt{k}}$, or has some intermediate behaviour.

\section{Proofs of the applications}\label{sec: apps}

\begin{proof}[Proof of Theorem \ref{theo_largest}] Consider the sets $Q_{\alpha, \beta}=\{n\in \mathbb{N}: n^{\alpha}<P^{+}(n)<n^{\beta}\}$ with $0\leq \alpha <\beta\leq 1$. These sets are always stable since for any prime $p$ we have $1_{Q_{\alpha, \beta}}(pn)=1_{Q_{\alpha, \beta}}(n)+O(1_{P^{+}(n)\in [n^{\alpha}, (pn)^{\alpha}]\cup [n^{\beta}, (pn)^{\beta}]})+O(1_{p>n^{\alpha}})$ and, after taking expectations over $n\leq x$, the $O(\cdot)$ term becomes negligible (as follows for instance from the continuity of the Dickman function). Also, $Q_{\alpha, \beta}$ is uniformly distributed in short intervals with density $\rho(1/\beta)-\rho(1/\alpha)$, since for $x/\log x\leq n\leq x$ we have
\begin{align*}
 1_{Q_{\alpha, \beta}}(n)=1_{P^{+}(n)\leq x^{\beta}}-1_{P^{+}(n)\leq x^{\alpha}}+O(1_{P^{+}(n)\in [(x/\log x)^{\alpha},x^{\alpha}]\cup [(x/\log x)^{\beta},x^{\beta}]}),   
\end{align*}
and the $O(\cdot)$ term is negligible, whereas $1_{P^{+}(n)\leq x^{\alpha}}$ is a real-valued multiplicative function, so by \cite[Lemma 3.4]{tera-binary} we have
\begin{align*}
\int_{0}^{x}\big|\mathbb{E}_{y\leq n\leq y+H,n=b\Mod{q}}1_{P^{+}(n)\leq x^{\alpha}}-\rho(1/\alpha)\big|\, dy=o(1),
\end{align*}
and the same holds with $\alpha$ replaced by $\beta$. Now, since $d(Q_{0,\alpha})+d(Q_{\alpha,\beta})+d(Q_{\beta,1})=1$, $d(Q_{0,\alpha}\cup Q_{\alpha, \beta}\cup Q_{\beta,1})=1$, and
$$
d(Q_{0,\alpha}) = \rho(1/\alpha) \neq 1 - \rho(1/\beta) = d(Q_{\beta,1})$$
by hypothesis, we conclude from Theorem \ref{theo1.5} that 
\begin{align*}
d_{-}((Q_{0,\alpha}-1)\cap (Q_{\alpha, \beta}-2)\cap (Q_{\beta,1}-3))>0    
\end{align*}
whenever $\rho(1/\alpha)\neq 1-\rho(1/\beta)$, and the positivity of the first density in Theorem \ref{theo_largest} follows. The positivity of the second density is proven completely symmetrically.
\end{proof}

\begin{proof}[Proof of Theorem \ref{theo_threeprimes}] We know from the proof of Theorem \ref{theo_largest} that $\{n\in \mathbb{N}: P^{+}(n)>n^{\gamma}\}$ is a stable set that is uniformly distributed in short intervals with density $1-\rho(1/\gamma)$. Thus, as long as $3(1-\rho(1/\gamma))>1$, we can apply Theorem \ref{theo1} to obtain the desired conclusion. But $3(1-\rho(1/\gamma))>1$ holds exactly when $\gamma <e^{-1/3}$, as wanted.
\end{proof}

\begin{proof}[Proof of Theorem \ref{theo_kprimes}] Employing Theorem \ref{theo2},  we only need to show that if $c_k$ are as in that theorem, then $1-\rho(1/\gamma_k)>c_k$ for $k=4,5$, and this is true by a numerical computation.
\end{proof}

\begin{proof}[Proof of Theorem \ref{theo_comparison}] By applying Theorem \ref{theo_largest} for any $\alpha,\beta$ satisfying $\rho(1/\alpha)\neq 1-\rho(1/\beta)$, we already know that
\begin{align}\label{eq17}\begin{split}
&d_{-}(n\in \mathbb{N}:P^{+}(n+1)<P^{+}(n+2)<P^{+}(n+3))>0,\\
&d_{-}(n\in \mathbb{N}:P^{+}(n+1)>P^{+}(n+2)>P^{+}(n+3))>0. 
\end{split}
\end{align}
We prove the positivity of the first density in Theorem \ref{theo_comparison}; the second one is proven completely symmetrically. We follow the strategy of \cite[Corollary 2.8]{mrt-sign}. Suppose for a contradiction that we had
\begin{align*}
\lim_{l\to \infty}\mathbb{E}_{n\leq x_l}1_{P^{+}(n+1)<P^{+}(n+2)<P^{+}(n+3)>P^{+}(n+4)}=0    
\end{align*}
for some sequence $(x_l)_{l \in \mathbb{N}}$ tending to infinity. Let 
\begin{align*}
\mathcal{S}\coloneqq \{n\in \mathbb{N}: P^{+}(n+1)<P^{+}(n+2)<P^{+}(n+3)\}.    
\end{align*}
Then as $l\to \infty$ we have
\begin{align*}
\mathbb{E}_{n\leq x_l}1_{n\in \mathcal{S},n+1\not \in \mathcal{S}}=o(1).    
\end{align*}
Iterating this, for any $H\in \mathbb{N}$, we see that for almost all $n\leq x_l$ we have
\begin{align*}
\mathbb{E}_{n\leq x_l}1_{n\in \mathcal{S}}1_{n+1\not \in \mathcal{S}\,\textrm{or}\, n+2\not \in \mathcal{S}\,\textrm{or}\ldots\textrm{or}\, n+H\not \in \mathcal{S}}=o(1).    
\end{align*}
In particular, this yields
\begin{align*}
\mathbb{E}_{n\leq x_l}1_{n\not \in \mathcal{S}}+\mathbb{E}_{n\leq x_l}1_{n+1,\ldots, n+H\in \mathcal{S}}\geq 1-o(1)    
\end{align*}
as $l \to \infty$. By \eqref{eq17}, we must then have 
\begin{align*}
\mathbb{E}_{n\leq x_l}1_{n+1,\ldots, n+H\in \mathcal{S}}\geq c - o(1)    
\end{align*}
for some $c>0$ independent of $H$ and for all large enough $l$. However, for any $\varepsilon>0$ we have 
\begin{align}\label{eq18}\begin{split}
&(\mathcal{S}-1) \cap \cdots \cap (\mathcal{S}-H)\\
&\subset \{n\in \mathbb{N}:P^{+}(n+1)\leq n^{\varepsilon}\}\cup \{n\in \mathbb{N}: P^{+}(n+h)>n^{\varepsilon}\quad \textnormal{for all}\quad 2\leq h\leq H\}.    
\end{split}
\end{align}
The density of the first set on the right-hand side of \eqref{eq18} over $n \leq x_l$ is $\rho(1/\varepsilon) + o(1)$, whereas by the Matom\"aki--Radziwi\l{}\l{} theorem the density of the second set is $\leq \varepsilon + o(1)$ as soon as $H$ is large enough in terms of $\varepsilon$. Thus
\begin{align*}
\mathbb{E}_{n\leq x_l}1_{n,n+1,\ldots, n+H\in \mathcal{S}}\leq \rho(1/\varepsilon)+\varepsilon + o(1)
\end{align*}
for all large enough $H$, and letting $\varepsilon\to 0$ we get the desired contradiction.
\end{proof}

\begin{proof}[Proof of Theorem \ref{theo_omega}] We prove the theorem for $\omega(n)$; the case of $\Omega(n)$ is similar (and in fact slightly simpler). We first note that the sets $A_{a}\coloneqq \{n\in \mathbb{N}: \omega(n)\equiv a \Mod{3}\}$ are weakly stable; indeed, for any prime $p\nmid n$ we have
\begin{align*}
1_{A_a}(n)=1_{A_{a+1}}(pn).    
\end{align*}
Also, we can represent $1_{A}(n)$ as a linear combination of $1$-bounded multiplicative functions by the Fourier expansion
\begin{align}\label{eq17a}
1_{A}(n)=\frac{1}{3}\sum_{j=0}^{2} \zeta^{-aj} \zeta^{\omega(n)j}    
\end{align}
where $\zeta \coloneqq e\left(\frac{1}{3} \right)$.
The constant function $1/3$ is certainly uniformly distributed in short intervals with density $1/3$.   The multiplicative function $n\mapsto \zeta^{\omega(n)}$ is uniformly distributed in short intervals with density $0$ thanks to \cite[Theorem A.1]{mrt-average} since
\begin{align}\label{eq18a}
\inf_{|t|\leq x} \sum_{p \leq x} \frac{1 - \textnormal{Re}( \zeta^{\omega(p)} \overline{\chi(p)p^{it}})}{p} \gg_{\chi}\log \log x \end{align}
for every Dirichlet character $\chi$ by the Vinogradov--Korobov zero-free region for Dirichlet $L$-functions. Thus $A_a$ itself is uniformly distributed in short intervals with density $1/3$.

Our objective is to show that $(A_{a_1}-1) \cap (A_{a_2}-2) \cap (A_{a_3}-3)$ has positive lower density for any $a_1,a_2,a_3\in \mathbb{Z}/3\mathbb{Z}$.
By modifying the first part of the proof of Theorem \ref{weak-stable}, it suffices to show that for every function $1\leq \omega(X)\leq X$ tending to infinity we have
\begin{equation}\label{eq19}
\mathbb{E}_{x/\omega(x)\leq n\leq x}^{\log}1_{A_{a_1}}(n+1)1_{A_{a_2}}(n+2)1_{A_{a_3}}(n+3)\gg 1.    
\end{equation}
The left-hand side of \eqref{eq19} can be expanded using \eqref{eq17a} as
\begin{equation}\label{eq20}
\frac{1}{27}\sum_{\substack{j_1,j_2,j_3\in \{0,1,2\}}} \zeta^{-(a_1j_1+a_2j_2+a_3j_3)} C_{j_1,j_2,j_3} 
\end{equation}
where
$$ C_{j_1,j_2,j_3} \coloneqq \mathbb{E}_{x/\omega(x)\leq n\leq x}^{\log} \zeta^{j_1\omega(n+1)+j_2\omega(n+2)+j_3\omega(n+3)}.    
$$
Clearly $C_{0,0,0} = 1$.  From \cite[Theorem 1.3]{tao-chowla} and \eqref{eq18a} we also have $C_{j_1,j_2,j_3}=o(1)$ when one or two of the $j_1,j_2,j_3$ vanish.  Finally, from the weak form of the logarithmic Elliott conjecture from \cite[Corollary 1.6]{tt-elliott} combined with \eqref{eq18a} we also see that $C_{j_1,j_2,j_3} = o(1)$ whenever $j_1 + j_2 + j_3 \neq 0 \Mod{3}$.  Finally we have $C_{2,2,2} = \overline{C_{1,1,1}}$.  Putting all this together, we can write the left-hand side of \eqref{eq19} as
\begin{equation}\label{eq21}
 \frac{1}{27} \left( 1 + 2 \textnormal{Re}(\zeta^{-a_1-a_2-a_3} C_{1,1,1} )\right) + o(1).
\end{equation}
For $c\in \{0,1,2\}$, let 
\begin{align*}
\delta_c\coloneqq  \mathbb{E}_{x/\omega(x)\leq n\leq x}^{\log}1_{\omega(n+1)+\omega(n+2)+\omega(n+3)\equiv c\Mod{3}}. 
\end{align*}
Then \eqref{eq21} can be rewritten as
\begin{align}\label{eq31}
\frac{1}{27} \left( 1 + 2\textnormal{Re}(\zeta^{-a} \delta_0 + \zeta^{-a+1} \delta_1 + \zeta^{-a+2} \delta_2) + o(1) \right)    
\end{align}
where $a \coloneqq a_1 + a_2 + a_3\ \Mod{3}$.  Since $\textnormal{Re}(\zeta^{-a})=1$ if $a\equiv 0\Mod{3}$ and $\textnormal{Re}(\zeta^{-a})=-1/2$ otherwise, using $\delta_0+\delta_1+\delta_2=1$, we can rewrite this as
$$ \frac{1}{27} \left( 3 \delta_{3-a} + o(1) \right).$$
Thus, in order to show that \eqref{eq31} is $\gg 1$, what remains to be shown is that $\delta_0,\delta_1,\delta_2\gg 1$. But since the sets $A_i$ are weakly stable and uniformly distributed with densities $1/3$ each, by Theorem \ref{theo1.5} we have
\begin{align*}
d_{-}\left( \bigcup_{\substack{c_1,c_2,c_3\in \{0,1,2\}\\c_1+c_2+c_3=c\Mod{3}}} (A_{c_1}-1)\cap (A_{c_2}-2) \cap (A_{c_3}-3)\right)>0,
\end{align*}
or in other words
\begin{align*}
d_{-}\left(\{n\in \mathbb{N}: \omega(n+1)+\omega(n+2)+\omega(n+3)\equiv c\Mod{3}\}\right)>0    
\end{align*}
for every $c\in \mathbb{Z}/3\mathbb{Z}$, which by partial summation implies $\delta_c>0$ for each $c$. The proof is now complete.
\end{proof}

\section{Sign patterns of the Liouville function}\label{sec: signpattern}

Before proving Theorem \ref{theo_sign}, we present a few lemmas. In what follows, $\omega(x)\leq x$ will be an arbitrary function tending to infinity.  By modifying the first part of the proof of Theorem \ref{weak-stable}, it suffices to show that
\begin{align*}
\limsup_{x\to \infty}\mathbb{E}_{x/\omega(x)\leq n\leq x}^{\log}1_{\lambda(n+1)=\varepsilon_1}\cdots 1_{\lambda(n+5)=\varepsilon_5}>0    
\end{align*}
for at least $24$ choices of $(\varepsilon_1,\ldots, \varepsilon_5)\in \{-1,+1\}^{5}$, and that the patterns listed in Theorem \ref{theo_sign} are among these $24$ patterns.

\begin{lemma} \label{le_isotopy} Let $k\geq 1$, and let $h_1,\ldots, h_k\in \mathbb{N}$. Let $1\leq \omega(X)\leq X$ be any function tending to infinity. Extend the Liouville function arbitrarily to negative integers. Then we have
\begin{align*}
\mathbb{E}_{x/\omega(x)\leq n\leq x}^{\log}\lambda(n+h_1)\cdots \lambda(n+h_k)=\mathbb{E}_{x/\omega(x)\leq n\leq x}^{\log}\lambda(n-h_1)\cdots \lambda(n-h_k)+o(1).    
\end{align*}
\end{lemma}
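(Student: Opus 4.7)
The plan is to invoke the ``isotopy formula'' established in \cite[Section 1]{tt-elliott}, which implies that the logarithmically averaged correlation
\[ C(h_1,\ldots,h_k) \coloneqq \E_{x/\omega(x)\leq n\leq x}^{\log} \lambda(n+h_1)\cdots\lambda(n+h_k) \]
depends only on the affine-equivalence class of the shift tuple $(h_1,\ldots,h_k)\in \mathbb{Z}^k$, up to an $o(1)$ error as $x\to\infty$. Since the reflection $(h_1,\ldots,h_k)\mapsto(-h_1,\ldots,-h_k)$ is an affine transformation (namely multiplication by $-1$), the identity in the lemma follows as a direct instance of this general invariance.

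To sketch the structure of that formula: it combines two basic invariances. The first is \emph{shift invariance}, which comes from the elementary estimate $\frac{1}{n+a}=\frac{1}{n}+O(|a|/n^2)$ together with partial summation, yielding
\[ \E_{x/\omega(x)\leq n\leq x}^{\log} F(n+a) = \E_{x/\omega(x)\leq n\leq x}^{\log} F(n) + O\!\left(\frac{|a|}{\log\omega(x)}\right) \]
for any $1$-bounded $F$ and fixed $a\in\mathbb{Z}$, which is $o(1)$ since $\log\omega(x)\to\infty$. Applying this to the left-hand side with $a=-h_1$ and to the right-hand side with $a=h_1$, we may freely assume $h_1=0$. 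The second is \emph{dilation invariance}: from the complete multiplicativity identity $\lambda(qn+qh_i)=\lambda(q)\lambda(n+h_i)$ we get pointwise
\[ \prod_{i=1}^k\lambda(n+h_i) = \lambda(q)^k \prod_{i=1}^k\lambda(qn+qh_i), \]
and making the substitution $m=qn$, handling the resulting ``$q\mid m$'' restriction via the entropy decrement argument of \cite[Theorem 3.6]{tt-elliott} (after averaging $q$ over a dyadic range of primes), gives $C(h_1,\ldots,h_k)=\lambda(q)^k\,C(qh_1,\ldots,qh_k)+o(1)$. Iterating shifts and dilations realizes any positive-integer affine transformation of the shift tuple.

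To complete the argument for the reflection, one dilates by $q=-1$, using the freedom granted by the hypothesis to extend $\lambda$ arbitrarily to the negative integers: choosing $\lambda(-k)\coloneqq\lambda(k)$ makes $\lambda$ completely multiplicative on $\mathbb{Z}\setminus\{0\}$ with $\lambda(-1)=1$, and the dilation argument above can then be applied formally with $q=-1$, with the negative-range log-average translated back into the positive integers via a compensating shift that is again absorbed by the entropy decrement. The main obstacle is implementing this last step cleanly, since the substitution $n\to -n$ nominally leaves the positive integers; this is precisely the delicate matching of averaging windows carried out in \cite[Section 1]{tt-elliott}, from which the present lemma is then immediate.
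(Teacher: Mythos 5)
Your proposal takes exactly the same route as the paper: the lemma is a direct citation of the isotopy formula, \cite[Theorem 1.2(iii)]{tt-elliott}, applied with dilation factor $-1$ (and using that $\lambda$ is real, so the complex conjugation in that formula is vacuous). Your extra sketch of the mechanism behind the isotopy formula is correct in spirit for positive dilations, and you rightly flag that the literal ``substitute $n\mapsto -n$'' heuristic does not carry over directly and instead defer to \cite{tt-elliott} for the negative case, which is precisely what the paper's one-line proof does.
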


\begin{proof}
This is a direct corollary of the ''isotopy formula'' \cite[Theorem 1.2(iii)]{tt-elliott}.
\end{proof}

\begin{lemma}\label{le_12}
Let $k\geq 1$ be an integer, and let $1\leq \omega(X)\leq X$ be any function tending to infinity. Then we have
\begin{align*}
\limsup_{x\to \infty}\left|\mathbb{E}_{x/\omega(x)\leq n\leq x}^{\log}\lambda(n+1)\cdots \lambda(n+k)\right|\leq \frac{1}{2}.    
\end{align*}
\end{lemma}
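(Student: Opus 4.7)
The plan is to relate the length-$k$ pattern $p(n)\coloneqq\lambda(n+1)\cdots\lambda(n+k)$ to its one-step translate $p(n+1)$ and thereby reduce the lemma to the two-point logarithmic Chowla theorem.

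The key elementary identity is a consequence of $\lambda^2=1$: direct cancellation gives $p(n+1)=p(n)\cdot\lambda(n+1)\lambda(n+k+1)$, so
\[ p(n)+p(n+1) = p(n)\bigl(1+\lambda(n+1)\lambda(n+k+1)\bigr), \]
and hence the pointwise identity $|p(n)+p(n+1)|=2\cdot 1_{\lambda(n+1)=\lambda(n+k+1)}$. On the other hand, because $\omega(x)\to\infty$, the logarithmic average on $[x/\omega(x),x]$ is invariant under the shift $n\mapsto n+1$ up to an $o(1)$ error: replacing $1/n$ by $1/(n+1)$ costs $O(1/n^2)$ per term (a summably bounded perturbation), while the normalising mass $\sum 1/n \asymp \log\omega(x)$ tends to infinity, so
\[ \mathbb{E}^{\log}_{x/\omega(x)\leq n\leq x} p(n+1) = \mathbb{E}^{\log}_{x/\omega(x)\leq n\leq x} p(n) + o(1). \]

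Combining these two ingredients through the triangle inequality gives
\[ 2\,\bigl|\mathbb{E}^{\log} p(n)\bigr| \leq \mathbb{E}^{\log}|p(n)+p(n+1)| + o(1) = 2\,\mathbb{E}^{\log} 1_{\lambda(n+1)=\lambda(n+k+1)} + o(1), \]
and using $1_{a=b}=(1+ab)/2$ for $a,b\in\{-1,+1\}$ the right-hand side rewrites as $1+\mathbb{E}^{\log}\lambda(n+1)\lambda(n+k+1)+o(1)$.

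The decisive input is then the two-point logarithmically averaged Chowla theorem \cite[Theorem 1.5]{tao-chowla}, which yields $\mathbb{E}^{\log}\lambda(n+1)\lambda(n+k+1)=o(1)$. Inserting this bound produces the desired estimate $|\mathbb{E}^{\log} p(n)|\leq 1/2+o(1)$. The one potential subtlety is verifying that Tao's two-point theorem transfers from the full range $[1,x]$ to the local logarithmic average $\mathbb{E}^{\log}_{x/\omega(x)\leq n\leq x}$, but this is routine because the quantitative $o(1)$ decay rate in the cited proof survives any such truncation provided only that $\omega(x)\to\infty$.
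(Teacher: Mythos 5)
Your argument is correct and is essentially the same as the paper's: you average $p(n)$ with its one-step shift, use the triangle inequality plus shift-invariance of the logarithmic average, and reduce pointwise to a two-point expression in $\lambda(n+1),\lambda(n+k+1)$, which is then handled by the two-point logarithmically averaged Chowla theorem. The only cosmetic difference is that the paper evaluates $\mathbb{E}^{\log}|\lambda(n+1)+\lambda(n+k+1)|$ via the equidistribution of the four length-two sign patterns, whereas you rewrite $2\cdot 1_{\lambda(n+1)=\lambda(n+k+1)}=1+\lambda(n+1)\lambda(n+k+1)$ and quote the vanishing of the correlation directly; these are equivalent.
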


\begin{proof}
This is a simple generalisation of \cite[Proposition 7.1]{tt-elliott}. By the triangle inequality, we have
\begin{align*}
&\left|\mathbb{E}_{x/\omega(x)\leq n\leq x}^{\log}\lambda(n+1)\cdots \lambda(n+k)+\lambda(n+2)\cdots \lambda(n+k+1)\right|\\
&\leq \mathbb{E}_{x/\omega(x)\leq n\leq x}^{\log}|\lambda(n+1)\cdots \lambda(n+k)+\lambda(n+2)\cdots \lambda(n+k+1)|\\
&= \mathbb{E}_{x/\omega(x)\leq n\leq x}^{\log}|\lambda(n+1)+\lambda(n+k+1)|.
\end{align*}
Here the first expression is equal to $2\left|\mathbb{E}_{x/\omega\leq n\leq x}^{\log}\lambda(n+1)\cdots \lambda(n+k)\right|+o(1)$ by the shift-invariance of logarithmic averages. But since $(\lambda(n+1),\lambda(n+k+1))$ takes each sign pattern in $\{-1,+1\}^2$ with density $1/4+o(1)$ with respect to the density $\mathbb{E}_{x/\omega\leq n\leq x}^{\log}$ by \cite[Theorem 1.2]{tao-chowla}, we get 
\begin{align*}
2\left|\mathbb{E}_{x/\omega(x)\leq n\leq x}^{\log}\lambda(n+1)\cdots \lambda(n+k)\right|\leq \frac{1}{2}|1+1|+\frac{1}{2}|1-1|+o(1)=1+o(1),
\end{align*}
as required.
\end{proof}

\begin{lemma} \label{le_correlationbound}
We have
\begin{align*}
\limsup_{x\to \infty}\left|\mathbb{E}_{x/\omega(x)\leq n\leq x}^{\log}\lambda(n+1)\lambda(n+2)\lambda(n+4)\lambda(n+5)\right|<1.    
\end{align*}
\end{lemma}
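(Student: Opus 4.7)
The plan is to argue by contradiction. Suppose that $\limsup_{x\to\infty}|A_x|=1$, where $A_x\coloneqq \mathbb{E}^{\log}_{x/\omega(x)\leq n\leq x}\lambda(n+1)\lambda(n+2)\lambda(n+4)\lambda(n+5)$. Passing to a subsequence $x_l\to\infty$ along which $A_{x_l}\to L$ for some $L\in\{-1,+1\}$, I will derive a contradiction with the $k=2$ logarithmic Chowla theorem \cite[Theorem 1.2]{tao-chowla}, invoked in the same way as in the proof of Lemma \ref{le_12} above.

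The key ingredient is the pointwise algebraic identity
\begin{align*}
&\lambda(n+1)\lambda(n+2)\lambda(n+4)\lambda(n+5)\cdot\lambda(n+2)\lambda(n+3)\lambda(n+5)\lambda(n+6)\\
&\quad \cdot\lambda(n+3)\lambda(n+4)\lambda(n+6)\lambda(n+7)=\lambda(n+1)\lambda(n+7),
\end{align*}
which is immediate once one notes that each factor $\lambda(n+j)$ with $2\leq j\leq 6$ appears exactly twice in the product on the left and hence squares to $1$, while $\lambda(n+1)$ and $\lambda(n+7)$ appear exactly once. The three factors on the left-hand side are the translates by $0,1,2$ of the $4$-correlation defining $A_x$. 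By the standard shift-invariance of the logarithmic average on $[x/\omega(x),x]$ (valid since $\omega(x)\to\infty$, at the cost of an $o(1)$ boundary error), each of these three logarithmic averages also tends to $L$ along $x_l$.

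Since each of the three factors takes values in $\{-1,+1\}$, the convergence of its logarithmic mean to $L\in\{\pm 1\}$ forces it to equal $L$ on a subset of $[x_l/\omega(x_l),x_l]$ of logarithmic density $1-o(1)$ as $l\to\infty$. Intersecting the three such subsets still leaves a set of logarithmic density $1-o(1)$, and on this intersection the identity above forces $\lambda(n+1)\lambda(n+7)=L^{3}=L$. Therefore
\begin{align*}
\mathbb{E}^{\log}_{x_l/\omega(x_l)\leq n\leq x_l}\lambda(n+1)\lambda(n+7)\longrightarrow L\in\{-1,+1\},
\end{align*}
contradicting the fact from \cite[Theorem 1.2]{tao-chowla} that this $2$-point logarithmic correlation tends to $0$. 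No substantive obstacle is expected: the argument is a short algebraic observation combined with the known $k=2$ logarithmic Chowla result, with the only mildly technical step being the uniform shift-invariance of the logarithmic average on $[x/\omega(x),x]$ for a bounded number of integer shifts, which is routine since the weight $1/n$ varies slowly on this range.
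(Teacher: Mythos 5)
Your proof is correct and is essentially the same argument as the paper's: both multiply the three shifted copies (by $0,1,2$) of the $4$-point correlation so that the interior factors $\lambda(n+2),\dots,\lambda(n+6)$ cancel, reducing to the $2$-point quantity $\lambda(n+1)\lambda(n+7)$, which is then handled by the logarithmic two-point Chowla theorem. The paper merely arrives at the same product in two intermediate stages rather than writing down the telescoping identity all at once, so the difference is purely presentational.
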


\begin{proof}
Suppose the contrary. Then there exists a sign $\varepsilon_0\in \{-1,+1\}$ and an infinite sequence $x_l\to \infty$ such that
\begin{align*}
\mathbb{E}_{x_l/\omega(x_l)\leq n\leq x}^{\log}\lambda(n+1)\lambda(n+2)\lambda(n+4)\lambda(n+5)=\varepsilon_0+o(1).    
\end{align*}
Consequently, we have
\begin{align}\label{eq14}
\mathbb{E}_{x_l/\omega(x_l)\leq n\leq x}^{\log}1_{\lambda(n+1)\lambda(n+2)\lambda(n+4)\lambda(n+5)=\varepsilon_0}=1+o(1). 
\end{align}
Shifting by one, we also have
\begin{align*}
\mathbb{E}_{x_l/\omega(x_l)\leq n\leq x}^{\log}1_{\lambda(n+2)\lambda(n+3)\lambda(n+5)\lambda(n+6)=\varepsilon_0}=1+o(1). 
\end{align*}
Putting the last two equations together, we obtain
\begin{align*}
\mathbb{E}_{x_l/\omega(x_l)\leq n\leq x}^{\log} 1_{\lambda(n+1)\lambda(n+3)\lambda(n+4)\lambda(n+6)=1}=1+o(1).
\end{align*}
Shifting by one again we have 
\begin{align}\label{eq15}
\mathbb{E}_{x_l/\omega(x_l)\leq n\leq x}^{\log} 1_{\lambda(n+2)\lambda(n+4)\lambda(n+5)\lambda(n+7)=1}=1+o(1).
\end{align}
 Finally, putting \eqref{eq14} and \eqref{eq15} together yields
\begin{align*}
\mathbb{E}_{x_l/\omega(x_l)\leq n\leq x}^{\log} 1_{\lambda(n+1)\lambda(n+7)=\varepsilon_0}=1+o(1),    
\end{align*}
and therefore
\begin{align*}
\mathbb{E}_{x/\omega(x)\leq n\leq x}^{\log }\lambda(n+1)\lambda(n+7)=\varepsilon_0 +o(1).    
\end{align*}
This however is in contradiction with the two-point logarithmic Chowla conjecture \cite[Theorem 1.2]{tt-chowla}.
\end{proof} 

\begin{proof}[Proof of Theorem \ref{theo_sign}] Let us define
\begin{align*}
C_{A}\coloneqq \plim \left(\mathbb{E}_{x_l/\omega(x_l)\leq n\leq x_l}^{\log}\prod_{j\in A}\lambda(n+j)\right)_{\ell\in \mathbb{N}},
\end{align*}
where $\plim$ is any generalised limit functional. Using the identity $1_{\lambda(n)=\varepsilon}=\frac{1+\varepsilon\lambda(n)}{2}$ for $\varepsilon\in \{-1,+1\}$ and expanding, we have
\begin{align}\label{eq16}
32\plim \left(\mathbb{E}_{x_l/\omega(x_l)\leq n\leq x_l}^{\log}1_{\lambda(n+1)=\varepsilon_1}\cdots 1_{\lambda(n+5)=\varepsilon_5}\right)_{\ell \in \mathbb{N}}=1+\sum_{\substack{A\subset [5]\\A\neq \emptyset}} C_A\prod_{j\in A}\varepsilon_j.  
\end{align}
It suffices to show that there are at least $24$ sign patterns $(\varepsilon_1,\ldots, \varepsilon_5)$ for which \eqref{eq16} is $>0$, regardless of which generalised limit $\plim$ we choose, including the $6$ explicit patterns listed in the theorem and their reversals.\footnote{It is this part of the argument that results in us obtaining a positive \emph{upper} density result rather than a positive \emph{lower} density result. Indeed, we show that for every generalized limit $\plim$ there are at least $24$ sign patterns $(\varepsilon_1,\ldots,\varepsilon_5)$ for which \eqref{eq16} is $>0$, but theoretically the choice of these $24$ sign patterns could depend on the choice of $\plim$, thus leading only to a $\limsup$ result. However, for each of the explicit patterns listed in Theorem \ref{theo_sign} we do obtain a lower density result by showing that \eqref{eq16} is always $>0$ for these sign patterns.}

By the odd order logarithmic Chowla conjecture \cite{tt-elliott}, all the odd order correlations are $0$, and by the two-point logarithmic Chowla conjecture \cite[Theorem 1.2]{tao-chowla}, all the two-point correlations are $0$ as well. Thus, if we denote the average on the left-hand side of \eqref{eq16} by $\mathbb{P}_{\varepsilon_1,\ldots, \varepsilon_5}$, then 
\begin{align*}
 32 \mathbb{P}_{\varepsilon_1,\ldots,\varepsilon_5}=1+\varepsilon_1\varepsilon_2\varepsilon_3\varepsilon_4\varepsilon_5(\varepsilon_1C_{[5]\setminus \{1\}}+\cdots +\varepsilon_5C_{[5]\setminus\{5\}}).  
\end{align*}
If we denote $C_{[5]\setminus \{1\}}\coloneqq a$, then by shift-invariance also $C_{[5]\setminus\{5\}}=a$. Furthermore, by Lemma \ref{le_isotopy}, if $C_{[5]\setminus \{2\}}=b$, then $C_{[5]\setminus \{4\}}=b$. Lastly, denote $C_{[5]\setminus \{3\}}=c$. We conclude that
\begin{align}\label{eq32}
 32 \mathbb{P}_{\varepsilon_1,\ldots,\varepsilon_5}=1+\varepsilon_1\varepsilon_2\varepsilon_3\varepsilon_4\varepsilon_5((\varepsilon_1+\varepsilon_5)a+(\varepsilon_2+\varepsilon_4)b+\varepsilon_3 c).    
\end{align}
Next, we split into several cases.\\

\emph{Case $a=b=0$.}  When this holds, by Lemma \ref{le_correlationbound} we have 
\begin{align*}
  32 \mathbb{P}_{\varepsilon_1,\ldots,\varepsilon_5}\geq 1-|c|>0   
\end{align*}
for each of the $32$ patterns.\\

\emph{Case $c\neq 0$, exactly one of $a,b\neq 0$.} Suppose that $a\neq 0, b=0$; the other case is symmetric. Then 
\begin{align*}
 32 \mathbb{P}_{\varepsilon_1,\ldots,\varepsilon_5}=1+\varepsilon_1\varepsilon_2\varepsilon_3\varepsilon_4\varepsilon_5
 ((\varepsilon_1+\varepsilon_5)a+\varepsilon_3c).
 \end{align*}
 Since $|a|\leq \frac{1}{2}$ and $|c|<1$ by Lemma \ref{le_12} and \ref{le_correlationbound}, respectively, the only way that the probability can be zero is if $\varepsilon_1=\varepsilon_5$, $\varepsilon_1\sgn(a)=\varepsilon_3\sgn(c)$ and $\varepsilon_1\varepsilon_2\varepsilon_4\varepsilon_5\sgn(c)=-1$. This happens for $32\cdot \frac{1}{2^3}=4$ sign patterns, so there are $32-4=28$ sign patterns having positive probability.\\

\emph{Case $c=0$, exactly one of $a,b\neq 0$.} Suppose that $a\neq 0,b=0$; the opposite case is symmetric. Then \begin{align*}
 32 \mathbb{P}_{\varepsilon_1,\ldots,\varepsilon_5}=1+(\varepsilon_1\varepsilon_2\varepsilon_3\varepsilon_4+\varepsilon_2\varepsilon_3\varepsilon_4\varepsilon_5)a,    
\end{align*}
and the only way this can be zero is if $\varepsilon_1\varepsilon_2\varepsilon_3\varepsilon_4=\varepsilon_2\varepsilon_3\varepsilon_4\varepsilon_5=-\sgn(a)$, which happens for exactly $32\cdot \frac{1}{2^2}=8$ patterns. Thus there are $32-8=24$ patterns with positive density. 

\emph{Case $c=0$, both $a,b\neq 0$.} Then we have 
\begin{align*}
 32 \mathbb{P}_{\varepsilon_1,\ldots,\varepsilon_5}=1+\varepsilon_1\varepsilon_2\varepsilon_3\varepsilon_4\varepsilon_5
 ((\varepsilon_1+\varepsilon_5)a+(\varepsilon_2+\varepsilon_4)b).
 \end{align*}
Now, consider $\varepsilon_i$ satisfying 
\begin{align*}
&\varepsilon_1\varepsilon_2\varepsilon_3\varepsilon_4\varepsilon_5=+1,\\
&\varepsilon_1=\varepsilon_5=-\sgn(a)\\
&\varepsilon_2=\varepsilon_4=-\sgn(b),
\end{align*}
which can always be found. The resulting probability is nonnegative, so
\begin{align*}
1-2|a|-2|b|\geq 0,    
\end{align*}
so $|a|+|b|\leq \frac{1}{2}$. Therefore, since $a,b\neq 0$, the only way that $\mathbb{P}_{\varepsilon_1,\ldots,\varepsilon_5}=0$ can happen is if $\varepsilon_1=\varepsilon_5$, $\varepsilon_2=\varepsilon_4$ and $\varepsilon_1\sgn(a)=\varepsilon_2\sgn(b)$. This happens for $32\cdot \frac{1}{2^3}=4$ patterns, so there must be at least $32-4=28$ patterns for which the probability is positive.

\emph{Case $a,b,c\neq 0$.} Now suppose that $\mathbb{P}_{\varepsilon_1,\ldots,\varepsilon_5}=0$, and consider the transformations of $(\varepsilon_1,\ldots, \varepsilon_5)$ given by
\begin{align*}
(\varepsilon_1, \varepsilon_2,\varepsilon_3,\varepsilon_4,\varepsilon_5)&\mapsto (-\varepsilon_1,\varepsilon_2,\varepsilon_3,\varepsilon_4,-\varepsilon_5)\\
(\varepsilon_1,\varepsilon_2, \varepsilon_3,\varepsilon_4, \varepsilon_5)&\mapsto (\varepsilon_1,-\varepsilon_2,\varepsilon_3,-\varepsilon_4,\varepsilon_5)\\
(\varepsilon_1, \varepsilon_2,\varepsilon_3,\varepsilon_4,\varepsilon_5)&\mapsto (-\varepsilon_1, -\varepsilon_2,\varepsilon_3,-\varepsilon_4,-\varepsilon_5). 
\end{align*}
Since $a\neq 0, b\neq 0$, each of the first two transformations changes the probability in \eqref{eq32}, in particular making it nonzero. The third transformation also changes the probability in \eqref{eq32}, unless $(\varepsilon_1+\varepsilon_5)a+(\varepsilon_2+\varepsilon_4)b=0$, in which case $32\mathbb{P}_{\varepsilon_1,\ldots, \varepsilon_5}=1+\varepsilon_3c>0$, contrary to our assumption. Thus, the patterns $(\varepsilon_1,\ldots, \varepsilon_5)$ can be grouped into groups of four where each group is closed under the above three transformations and has at most one pattern with zero probability. Hence, there are at least $32-\frac{32}{4}=24$ patterns having nonzero probability.\\

Since the above considerations exhaust all cases, we have now shown that there are at least $24$ sign patterns of length $5$ having positive upper density. We still need to show that the specific patterns mentioned in Theorem \ref{theo_sign} are among the patterns having positive upper density. The existence of the patterns having exactly one plus or exactly one minus follows directly from the proof strategy of \cite[Corollary 2.8]{mrt-sign} together with the fact that each length $4$ pattern occurs in the Liouville function with positive lower density. When it comes to the remaining patterns, consider $(+1,+1,\pm 1,-1,-1)$: the others are similar. This pattern has probability
\begin{align*}
32\mathbb{P}_{\varepsilon_1,\ldots,\varepsilon_5}=1\pm c>0    
\end{align*}
by Lemma \ref{le_correlationbound}. This completes the proof.\end{proof}

\bibliography{refs_valuepattern}
\bibliographystyle{plain}

\end{document}